\newtheorem{theorem}{Theorem}[section]
\newtheorem{corollary}[theorem]{Corollary}
\newtheorem{proposition}[theorem]{Proposition}
\newtheorem{lemma}[theorem]{Lemma}
\newtheorem{conjecture}[theorem]{Conjecture}
\theoremstyle{definition}
\newtheorem{remark}[theorem]{Remark}
\theoremstyle{property}
\DeclareFontFamily{OT1}{rsfs}{}
\DeclareFontShape{OT1}{rsfs}{n}{it}{<-> rsfs10}{}
\DeclareMathAlphabet{\curly}{OT1}{rsfs}{n}{it}
\newcommand\I{\mathcal I}
\renewcommand\L{\mathcal L}
\renewcommand\O{\mathcal O}
\newcommand\PP{\mathbb P}
\newcommand\EE{\mathbb E}
\newcommand\cA{\mathcal A}
\newcommand\E{\mathbb E}
\newcommand\C{\mathbb C}
\newcommand\sfZ{\mathsf Z}
\newcommand\Q{\mathbb Q}
\newcommand\Z{\mathbb Z}
\newcommand\cZ{\mathcal Z}
\newcommand\Coeff{\mathrm{Coeff}}
\newcommand\inst{\mathrm{inst}}
\newcommand\mov{\mathrm{mov}}
\newcommand\s{\mathfrak s}
\renewcommand\t{\mathfrak t}
\newcommand\SU{\mathrm{SU}}
\newcommand\vd{\mathrm{vd}}
\newcommand\pt{\mathrm{pt}}
\newcommand\vir{\mathrm{vir}}
\newcommand\SW{\mathrm{SW}}
\newcommand\td{\mathrm{td}}
\newcommand\rk{\operatorname{rk}}
\newcommand\tr{\operatorname{tr}}
\newcommand\ch{\operatorname{ch}}
\newcommand\id{\operatorname{id}}
\newcommand\mon{\operatorname{mon}}
\newcommand\Hom{\operatorname{Hom}}
\renewcommand\hom{\mathcal{H}{\it{om}}}
\newcommand\Pic{\operatorname{Pic}}
\newcommand\Sym{\operatorname{Sym}}
\newcommand\mdot{{\scriptscriptstyle\bullet}}
\newcommand\INTO{\ar@{^{(}->}[r]}
\DeclareRobustCommand{\SkipTocEntry}[4]{}
\begin{document}
\title[Verlinde formulae on complex surfaces]{Verlinde formulae on complex surfaces: $K$-theoretic invariants}
\author[G\"ottsche, Kool, Williams]{L.~G\"ottsche, M.~Kool, and R.~A.~Williams}
\maketitle

\vspace{-0.8cm}

\begin{abstract}
We conjecture a Verlinde type formula for the moduli space of Higgs sheaves on a surface with a holomorphic 2-form. The conjecture specializes to a Verlinde formula for the moduli space of sheaves. Our formula interpolates between $K$-theoretic Donaldson invariants studied by the first named author and Nakajima-Yoshioka and $K$-theoretic Vafa-Witten invariants introduced by Thomas and also studied by the first and second named authors. We verify our conjectures in many examples (e.g.~on K3 surfaces).
\end{abstract}
\thispagestyle{empty}

\section{Introduction} \label{intro}

Let $C$ be a smooth projective curve of genus $g \geq 2$ over $\C$. The classical $\theta$-functions at level $k \geq 1$ for $C$ are defined as follows. The map $C \rightarrow \Pic^1(C)$, $p \mapsto [\O_C(p)]$ gives rise to the Abel-Jacobi map on the symmetric product
$$
\Sym^{g-1}(C) \rightarrow \Pic^{g-1}(C)
$$
and the image $\Theta$, which has codimension one, is known as the theta divisor. Denote by $\L$ the corresponding line bundle. The $\theta$-functions of level $k$ are defined as the elements of $H^0(\Pic^{g-1}(C), \L^{\otimes k})$. Since $H^{>0}(\Pic^{g-1}(C), \L^{\otimes k})=0$, the Riemann-Roch theorem gives the dimension of the space of $\theta$-functions of level $k$ as the degree of $\exp(k \Theta)$. Since $\Theta^g / g! = 1$, one obtains 
$$
\dim H^0(\Pic^{g-1}(C), \L^{\otimes k}) = k^g. 
$$
The Verlinde formula extends this equation to moduli spaces of rank 2 (and higher) stable vector bundles on $C$ as follows. See \cite{Bot} for a survey.

Denote by $M:=M_C(2,0)$ the moduli space of rank 2 semistable vector bundles $E$ on $C$ with $\det E \cong \O_C$. Then $\Pic(M)$ is generated by the determinant line bundle $\L$ \cite{Bea,DN}. The Verlinde formula (for rank 2 and trivial determinant), originating from conformal field theory \cite{Ver}, is the following expression
\begin{equation*} 
\dim H^0(M,\L^{\otimes k}) = \Bigg( \frac{k+2}{2} \Bigg)^{g-1} \sum_{j=1}^{k+1} \sin\Big( \frac{\pi j}{k+2} \Big)^{2-2g}.
\end{equation*}
This formula has been proved by several people \cite{Sze,BS,Tha,Kir,Don,Ram,DW,Zag2} (for rank 2) and \cite{Fal, BL} (for general rank). Numerical aspects of this formula were studied by D.~Zagier \cite{Zag1}.

Let $N:=N_C(2,0)$ be the moduli space of rank 2 semistable Higgs bundles $(E,\phi)$ on $C$ with $\det E \cong \O_C$. Here $E$ is a rank 2 vector bundle and $\phi : E \rightarrow E \otimes K_C$ is called the Higgs field. The moduli space $N$ is non-compact. It has a $\C^*$-action defined by scaling the Higgs field. The determinant line bundle $\mathcal{L}$ on $N$ is $\C^*$-equivariant, therefore $H^0(N,\L^{\otimes k})$ is a $\C^*$-representation.
Recently, Halpern-Leistner \cite{H-L} and Andersen-Gukov-Du Pei \cite{AGDP} found a formula for $\dim H^0(N, \L^{\otimes k})$, which can be seen as a Verlinde formula for Higgs bundles. In physics, this formula is related to complex Chern-Simons theory of the (3-dimensional) Seifert manifold $C \times S^1$ embedded into string theory \cite{GDP}.

In this paper, we study Verlinde type formulae on the moduli space of rank 2 Gieseker stable (Higgs) sheaves on $S$, where $S$ is a smooth projective surface satisfying $p_g(S)>0$ and $b_1(S)=0$.

\subsection{Verlinde formula for moduli of sheaves}

Denote by $M:=M_S^H(2,c_1,c_2)$ the moduli space of rank $2$ Gieseker $H$-stable torsion free sheaves on $S$ with Chern classes $c_1 \in H^2(S,\Z)$ and $c_2 \in H^4(S,\Z)$. We assume there are no rank 2 strictly Gieseker $H$-semistable sheaves on $S$ with Chern classes $c_1,c_2$. 
Then $M$ is a projective scheme with perfect obstruction theory of virtual dimension
\begin{equation} \label{vd}
\vd = 4c_2-c_1^2-3\chi(\O_S).
\end{equation}
When a universal sheaf $\EE$ exists on $M \times S$, the virtual tangent bundle is given by $T_M^\vir = R \hom_{\pi_M}(\EE,\EE)_0[1]$, where $\pi _M: M \times S \rightarrow M$ denotes projection and $(\cdot)_0$ denotes trace-free part. In general $\EE$ exists only \'etale locally. Nevertheless, $R \hom_{\pi_M}(\EE,\EE)_0[1]$ exists globally on $M \times S$, essentially because this expression is invariant under replacing $\EE$ by $\EE \otimes \mathcal{L}$ for any $\mathcal{L} \in \Pic(M \times S)$ \cite[Sect.~10.2]{HL}. Algebraic Donaldson invariants are defined by integrating polynomial expressions in slant products over $[M]^{\vir}$. These were studied in detail, for any rank, in T.~Mochizuki's remarkable monograph \cite{Moc}.

Let $\alpha \in H^*(S,\Q)$. When a universal sheaf $\EE$ exists on $M \times S$, we consider the $\mu$-insertion defined by the slant product 
\begin{align} 
\begin{split} \label{muinsert}
/ : H^p(S \times M,\Q) \times H_q(S,\Q) \rightarrow H^{p-q}(M,\Q), \\
\mu(\alpha) := \big(c_2(\EE) - \frac{1}{4} c_1(\EE)^2 \big) / \mathrm{PD}(\alpha)  \in H^*(M,\Q),
\end{split}
\end{align}
where $\mathrm{PD}(\cdot)$ denotes Poincar\'e dual. Note that
$$
c_2(\EE) - \frac{1}{4} c_1(\EE)^2 = -\frac{1}{4} \ch_2(\EE \otimes \EE \otimes \det(\EE)^*),
$$
where the sheaf $\EE \otimes \EE \otimes \det(\EE)^*$ always exists globally on $M \times S$, again, essentially because this expression is invariant under replacing $\EE$ by $\EE \otimes \mathcal{L}$. Therefore \eqref{muinsert} is always defined. When $L \in \Pic(S)$ satisfies $c_1(L)c_1 \in 2\Z$, there exists a line bundle $\mu(L) \in \Pic(M)$, whose class in cohomology is \eqref{muinsert} for $\alpha=c_1(L)$ \cite[Ch.~8]{HL}. One refers to $\mu(L)$ as a Donaldson line bundle. The first conjecture concerns 
$$
\chi^{\vir}(M,\mu(L)) := \chi(M, \O^{\vir}_M \otimes \mu(L)),
$$
known as a $K$-theoretic Donaldson invariant \cite{GNY2}.\footnote{When the Donaldson line bundle does not exist, we \emph{define} $\chi^{\vir}(M,\mu(L))$ by the virtual Hirzebruch-Riemann-Roch formula \cite[Cor.~3.4]{FG}, i.e.~$\int_{[M]^{\vir}} e^{\mu(c_1(L))} \, \td(T_M^{\vir})$.} 
Here $\O_M^{\vir}$ denotes the virtual structure sheaf of $M$ \cite[Sect.~3.2]{FG}.
The first named author, H.~Nakajima, and K.~Yoshioka determined their wall-crossing behaviour, when $S$ is a toric surface using the $K$-theoretic Nekrasov partition function \cite{GNY2}. 
For rational surfaces the first named author and Y.~Yuan established structure formulae for these invariants and relations to strange duality \cite{GY, Got1}. 

We denote intersection numbers such as $\int_S c_1(L) c_1(\O(K_S))$ by $c_1(L)c_1(\O(K_S))$ or simply $LK_S$. Denote by $\SW(a)$ the Seiberg-Witten invariant of $a \in H^2(S,\Z)$.\footnote{We use Mochizuki's convention: $\SW(a) = \widetilde{\SW}(2a-K_S)$ with $\widetilde{\SW}(b)$ the usual Seiberg-Witten invariant in class $b \in H^2(S,\Z)$. Moreover, there are finitely many $a \in H^2(S,\Z)$ such that $\SW(a) \neq 0$. Such classes are called Seiberg-Witten basic classes.} 

\begin{conjecture} \label{conj1}
Let $S$ be a smooth projective surface with $p_g(S)>0$, $b_1(S)=0$, and $L \in \Pic(S)$. Let $H,c_1,c_2$ be chosen such that there are no rank 2 strictly Gieseker $H$-semistable sheaves on $S$ with Chern classes $c_1, c_2$. Then $\chi^{\vir}(M_S^H(2,c_1,c_2),\mu(L))$ equals the coefficient of $x^{\vd}$ of
$$
\frac{2^{2-\chi(\O_S)+K_S^2}}{(1-x^2)^{\frac{(L-K_S)^2}{2}+\chi(\O_S)}} \sum_{a\in H^2(S,\Z)} \SW(a) \, (-1)^{ac_1} \, \left(\frac{1+x}{1-x}\right)^{\left(\frac{K_S}{2}-a\right)(L-K_S)}.
$$
\end{conjecture}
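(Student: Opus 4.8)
The plan to establish Conjecture \ref{conj1} is to follow the philosophy by which T.~Mochizuki \cite{Moc} expressed cohomological Donaldson invariants of surfaces with $p_g>0$ through Seiberg--Witten invariants, and to upgrade it to $K$-theory. By virtual Hirzebruch--Riemann--Roch \cite[Cor.~3.4]{FG},
$$
\chi^{\vir}(M_S^H(2,c_1,c_2),\mu(L)) = \int_{[M_S^H(2,c_1,c_2)]^{\vir}} e^{\mu(c_1(L))}\, \td(T_M^{\vir}),
$$
so it suffices to evaluate this virtual integral. First I would pass to the moduli space of Bradlow pairs $(E,s)$ with $0\neq s\in H^0(E)$, equipped with its natural perfect obstruction theory, and vary the stability parameter. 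In the extreme chamber the pair space has no higher obstructions and its contribution can be written down directly; crossing each wall produces a term supported---after Mochizuki's analysis of the fixed loci---on products $S^{[n_1]}\times S^{[n_2]}$ of Hilbert schemes of points, indexed by splittings $c_1=a_1+a_2$, with the Seiberg--Witten invariants $\SW(a)$ entering through the virtual classes of the rank-$1$ factors. The essential new input is to carry out this wall-crossing with the virtual structure sheaf and the twist $e^{\mu(c_1(L))}$ in place of the virtual fundamental class, i.e.\ to obtain the correct $\td$-weighted residue integrals on $S^{[n_1]}\times S^{[n_2]}$; I expect this by re-running Mochizuki's master space $\C^*$-localization in equivariant $K$-theory, in the spirit of the toric $K$-theoretic computations of \cite{GNY2}.

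Second, I would invoke universality. Each resulting integrand is a tautological class on $S^{[n_1]}\times S^{[n_2]}$ built from $\mu(L)$, $K_S$, $a$, and the tangent bundles of the two Hilbert factors, so by the universality theorems of Ellingsrud--G\"ottsche--Lehn and their $K$-theoretic refinements the integral is a universal polynomial, independent of $S$, in the numbers $L^2$, $LK_S$, $K_S^2$, $\chi(\O_S)$, $aL$, $aK_S$, $ac_1$ (with $c_2$ entering only through $\vd$). Summing over $n_1,n_2$ with the appropriate weights, $\chi^{\vir}(M,\mu(L))$ becomes the coefficient of $x^{\vd}$ in a universal generating function whose exponents are forced to be linear in exactly these numbers---precisely the shape of the conjectural formula---so it remains only to identify finitely many universal power series in $x$. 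For this I would combine the blow-up and wall-crossing formulae and the structural results for $K$-theoretic Donaldson invariants of rational surfaces \cite{GNY2,GY,Got1}, which fix the $x$-dependence of the building blocks, with direct computations on minimal elliptic surfaces, products of two curves, and double covers of rational surfaces---surfaces on which the moduli spaces, their virtual structure sheaves, and the Seiberg--Witten basic classes are all accessible---to pin down the prefactor $2^{2-\chi(\O_S)+K_S^2}$ and the exponents of $1-x^2$ and of $\tfrac{1+x}{1-x}$.

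Finally I would test the closed form so obtained on cases not used in its derivation. The cleanest is the K3 surface, where $K_S=0$, $\chi(\O_S)=2$, $\SW(0)=1$, and $M$ is smooth of dimension $\vd$, so that $\chi^{\vir}(M,\mu(L))=\chi(M,\mu(L))$; there the conjecture predicts that this equals the coefficient of $x^{\vd}$ in $(1-x^2)^{-L^2/2-2}$, a binomial coefficient, which one checks against the geometry of moduli of sheaves on K3.

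The hard part will be the first step: Mochizuki's wall-crossing is set up cohomologically and its bookkeeping of the contributing strata is intricate, so producing a genuine $K$-theoretic refinement---with $\td(T_M^{\vir})$ and the $\mu(L)$-twist correctly surviving the localization---is delicate; and even once that is in place, the resulting $K$-theoretic Hilbert-scheme integrals, far less explicit than their cohomological analogues, must still be summed into the closed product form appearing in the statement.
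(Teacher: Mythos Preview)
First, note that the statement is a \emph{conjecture}: the paper does not prove it. What the paper does is gather evidence by a strategy whose first two steps coincide with yours in outline but differ in an important detail, and whose third step is weaker than what you propose.

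On the first step, you propose to upgrade Mochizuki's wall-crossing to equivariant $K$-theory. This is unnecessary, and the paper avoids it: by virtual Hirzebruch--Riemann--Roch (Lemma~\ref{redtoDon}), $\chi^{\vir}(M,\mu(L))$ is already a polynomial in $\mu(c_1(L))$ and the descendent insertions $\tau_k(\alpha)$ built from $\ch(T_M^{\vir})$ and $\td(T_M^{\vir})$. Mochizuki's \emph{cohomological} formula (Theorem~\ref{mocthm}) therefore applies as a black box and produces the Hilbert-scheme integrals you describe, with no need to redo the master-space argument in $K$-theory.

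On the second step, universality goes through essentially as you say (Lemma~\ref{A}, Proposition~\ref{mainprop}): the generating series is governed by eleven universal power series $A_1,\ldots,A_{11}$ in $q$ (and $s,y$), determined by their values on toric surfaces.

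The genuine gap is in your third step. You propose to pin down the universal series in closed form via the blow-up/wall-crossing/structural results for $K$-theoretic Donaldson invariants of \emph{rational} surfaces \cite{GNY2,GY,Got1}. But rational surfaces have $p_g=0$: Mochizuki's formula does not apply there, the Seiberg--Witten side is vacuous, and the $A_i$ are defined through an auxiliary $\C^*$-equivariant expression on $S^{[n_1]}\times S^{[n_2]}$ that does not coincide with the rational-surface invariants of those references. So those structural results do not directly determine $A_1,\ldots,A_{11}$. The paper does not attempt this: it computes the $A_i$ only to finite order by torus localization on $\PP^2$ and $\PP^1\times\PP^1$ (Section~\ref{toricA}) and then \emph{verifies} Conjecture~\ref{conj1} in a long list of examples (K3, elliptic, Kanev, double octic, quintic, blow-ups thereof) up to bounded $\vd$. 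Identifying the $A_i$ in closed form, and hence proving the conjecture in general, remains open; the K3 case is settled separately in \cite{Got2} by different methods.
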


In Section \ref{inst} we verify this conjecture in many cases for: $S$ a K3 surface, elliptic surface, Kanev surface, double cover of $\PP^2$ branched along a smooth octic curve, quintic surface, and blow-ups thereof. Our strategy is similar to \cite{GNY3, GK1, GK2, GK3}. We first express $\chi^{\vir}(M, \mu(L))$ in terms of algebraic Donaldson invariants. Using Mochizuki's formula \cite[Thm.~1.4.6]{Moc}, the latter can be written in terms of integrals on Hilbert schemes of points. We show that these integrals can be combined into a generating series which is a cobordism invariant and hence determined on $\PP^2$ and $\PP^1 \times \PP^1$. On $\PP^2$ and $\PP^1 \times \PP^1$, we determine this generating series (to some order) by localization. 

Finally, in Section \ref{appl} we discuss interesting special cases of Conjecture \ref{conj1}.

\subsection{Verlinde formula for moduli of Higgs sheaves}

Let $H$ be a polarization on $S$. Recently Y.~Tanaka and R.~P.~Thomas \cite{TT1} proposed a mathematical definition of $\SU(r)$ Vafa-Witten invariants of $S$. We consider the case $r=2$. Their definition involves the moduli space of Higgs sheaves $(E,\phi)$
$$
N := N_S^H(2,c_1,c_2) = \big\{ (E,\phi) \, : \, \tr \phi = 0, \, c_1(E) = c_1, \, c_2(E) =c_2 \big\},
$$
where $E$ is a rank $2$ torsion free sheaf, $\phi : E \rightarrow E \otimes K_S$ is a morphism, and the pair $(E,\phi)$ satisfies a (Gieseker) stability condition with respect to $H$. Tanaka-Thomas show that $N$ admits a symmetric perfect obstruction theory in the sense of \cite{Beh}. As in the curve case, one can scale a Higgs sheaf by sending $(E,\phi)$ to $(E,t\phi)$ for any $t \in \C^*$. This defines an action of $\C^*$ on $N$. As in the previous section, we assume stability and semistability coincide. 
Then the fixed locus $N^{\C^*}$ is projective and the Vafa-Witten invariants are defined as
\begin{equation*} 
\int_{[N^{\C^*}]^{\vir}} \frac{1}{e(N^\vir)} \in \Q,
\end{equation*}
where $N^{\vir}$ denotes the virtual normal bundle and $e(\cdot)$ is the equivariant Euler class \cite{GP}. The fixed locus $N^{\C^*}$ has two types of connected components:
\begin{itemize}
\item Components containing $(E,\phi)$ with $\phi = 0$, which we refer to as the \emph{instanton branch}. This branch is isomorphic to the Gieseker moduli space $M:=M_S^H(2,c_1,c_2)$. The $\C^*$-localized perfect obstruction theory on $M$ coincides with the one from the previous section.
\item Components containing $(E,\phi)$, where $E = E_0 \oplus E_1 \otimes \mathfrak{t}^{-1}$ is the decomposition of $E$ into rank 1 eigensheaves, and
$\phi : E_0 \rightarrow E_1 \otimes K_S$. Here $\mathfrak{t}$ denotes the weight one character of $\C^*$. These components constitute the \emph{monopole branch}, which we collectively denote by $M^{\mon}$. Denote by $S^{[n]}$ the Hilbert scheme of $n$ points on $S$ and by $|\beta|$ the linear system of an algebraic class $\beta \in H^2(S,\Z)$. A.~Gholampour and Thomas \cite{GT1, GT2} prove that the monopole components are isomorphic to incidence loci\footnote{For fixed $r=2$, $c_1, c_2$, the virtual dimension of $M^{\mon} \subset N^{\C^*}$ is in general \emph{not} given by \eqref{vd}. In fact, $M^{\mon}$ can have components of different virtual dimension (see Remark \ref{differentvd}).}
$$
S_{\beta}^{[n_0,n_1]} :=\{ (Z_0,Z_1,C) \, : \, I_{Z_0}(-C) \subset I_{Z_1} \} \subset S^{[n_0]} \times S^{[n_1]} \times |\beta|,
$$
for certain $n_0,n_1,\beta$, where $I_Z \subset \O_S$ is the ideal sheaf corresponding to $Z \subset S$. Moreover, they show that the $\C^*$-localized perfect obstruction theory on $S_{\beta}^{[n_0,n_1]}$ is naturally obtained by realizing this space as a degeneracy locus inside the smooth space $S^{[n_0]} \times S^{[n_1]} \times |\beta|$ and reducing the perfect obstruction theory coming from this description (Section \ref{mon}). 
\end{itemize}

Let $M' \subset M^{\mon}$ be a connected component of the monopole branch and let $\alpha \in H^*(S,\Q)$. Similar to the previous section, we define
\begin{equation} \label{muinsertmon}
\mu(\alpha) := \big(c_2^{\C^*}(\EE) - \frac{1}{4} c_1^{\C^*}(\EE)^2 \big) / \mathrm{PD}(\alpha) \in H_{\C^*}^{*}(M',\Q), 
\end{equation}
where the Chern classes are $\C^*$-equivariant, $M'$ and $S$ carry the trivial torus action, and $\EE$ is the universal sheaf on $M' \times S$.

Vafa-Witten invariants can also be seen as reduced Donaldson-Thomas invariants counting 2-dimensional sheaves on $X = \mathrm{Tot}(K_S)$ ---the total space of the canonical bundle on $S$ \cite{GSY2}. From this perspective, it is more natural to work with the Nekrasov-Okounkov twist of $\O_N^{\vir}$ \cite{NO}, which is defined as
$$
\widehat{\O}_N^{\vir} := \sqrt{K_N^{\vir}} \otimes \O_N^{\vir},
$$
where $\sqrt{K_N^{\vir}}$ is a choice of square root of $K_N^{\vir} = \det(\Omega_N^{\vir})$. Over the fixed locus $N^{\C^*}$, this choice of square root is canonical \cite[Prop.~2.6]{Tho}.
For any (possibly infinite-dimensional) graded vector spaces set
$$
\chi\Big(\bigoplus_i \t^{a_i} - \bigoplus_j \t^{b_j} \Big) := \sum_i y^{a_i} - \sum_j y^{b_j}.
$$ 
The $K$-theoretic Vafa-Witten invariants are \cite[(2.12), Prop.~2.13]{Tho}
\begin{equation*} 
\chi(N,\widehat{\O}^{\vir}_N) := \chi( R\Gamma(N, \widehat{\O}_N^{\vir}) ) = \chi \Big(N^{\C^*}, \frac{\O^{\vir}_{N^{\C^*}}}{\Lambda_{-1} (N^{\vir})^{\vee}} \otimes \sqrt{K_N^{\vir}} \Big|_{N^{\C^*}} \Big).
\end{equation*} 
Here $\Lambda_{-1} (\cdot)$ is introduced in Section \ref{inst} and $y$ is related to $t := c_1^{\C^*}(\mathfrak{t})$ by $y=e^t$. The Nekrasov-Okounkov twist ensures that these invariants are unchanged under $y \leftrightarrow y^{-1}$ \cite[Prop.~2.27]{Tho}. Our next two conjectures concern
\begin{equation*}
\chi(N, \widehat{\O}^{\vir}_N \otimes \mu(L)) := \chi( R\Gamma(N, \widehat{\O}_N^{\vir} \otimes \mu(L))),
\end{equation*}
where $L \in \Pic(S)$.\footnote{If the line bundle $\mu(L)$ does not exist on $N$ (or $N^{\C^*}$), then we \emph{define} these invariants by virtual $\C^*$-localization combined with the virtual HRR formula as before.} This expression has instanton and monopole contributions corresponding to the decomposition $N^{\C^*} = M \sqcup M^{\mon}$.
By the argument in \cite[Sect.~2.5]{Tho}, the instanton contribution equals
$$
(-1)^{\vd} y^{-\frac{\vd}{2}} \chi_{-y}^{\vir}(M,\mu(L)) := (-1)^{\vd} y^{-\frac{\vd}{2}} \sum_{p}(-y)^p \chi^\vir(M,\Lambda^p \Omega^{\vir}_M \otimes \mu(L)),
$$
where $\vd$ is given by \eqref{vd} and $\chi_y^\vir(M,\cdot)$ is the twisted virtual $\chi_y$-genus \cite{FG}. 

Consider the following two theta functions and the normalized Dedekind eta function
\begin{equation} \label{thetadef}
\theta_3(x,y) = \sum_{n \in \Z} x^{n^2} y^n, \quad \theta_2(x) = \sum_{n \in \Z + \frac{1}{2}} x^{n^2} y^n, \quad \overline{\eta}(x) = \prod_{n=1}^{\infty} (1-x^n).
\end{equation}
We also use the following notation. For any $a,b \in H^2(S,\Z)$, define 
\begin{equation} \label{defdelta}
\delta_{a,b} =  \# \left\{ \gamma \in H^2(S,\Z) \,: \, a-b =2\gamma \right\}.
\end{equation}
\begin{conjecture} \label{conj2}
Let $S$ be a smooth projective surface with $p_g(S)>0$, $b_1(S)=0$, and $L \in \Pic(S)$. Let $H,c_1,c_2$ be chosen such that there are no rank 2 strictly Gieseker $H$-semistable sheaves on $S$ with Chern classes $c_1, c_2$. Let $\vd$ be defined by \eqref{vd}. Then $y^{-\frac{\vd}{2}} \chi_{-y}^{\vir}(M_S^H(2,c_1,c_2),\mu(L))$ equals the coefficient of $x^{\vd}$ of
\begin{align*}
&4\left( \frac{1}{2} \prod_{n=1}^{\infty} \frac{1}{(1-x^{2n})^{10}(1-x^{2n}y)(1-x^{2n}y^{-1})}\right)^{\chi(\O_S)}\left(\frac{ 2 \overline \eta(x^4)^2}{\theta_3(x,y^{\frac{1}{2}})}\right)^{K_S^2}\\
&\cdot\left(\prod_{n=1}^{\infty} \left(\frac{(1-x^{2n})^2}{(1-x^{2n}y)(1-x^{2n}y^{-1})}\right)^{n^2}\right)^{\frac{L^2}{2}}
\left(\prod_{n=1}^{\infty}\left( \frac{1-x^{2n}y^{-1}}{1-x^{2n}y}\right)^n\right)^{LK_S}\\
&\cdot\sum_{a\in H^2(S,\Z)} (-1)^{c_1 a} \, \SW(a) \,   \left(\frac{\theta_3(x,y^{\frac{1}{2}})}{\theta_3(-x,y^{\frac{1}{2}})}\right)^{aK_S} \\
&\cdot \Bigg(\prod_{n=1}^{\infty}\left(\frac{(1-x^{2n-1} y^{\frac{1}{2}})(1+x^{2n-1}y^{-\frac{1}{2}})}{(1-x^{2n-1}y^{-\frac{1}{2}})(1+x^{2n-1} y^{\frac{1}{2}})}\right)^{2n-1}\Bigg)^{\frac{L(K_S-2a)}{2}}.
\end{align*}
\end{conjecture}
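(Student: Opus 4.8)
The plan is to verify Conjecture \ref{conj2} for each of the surfaces treated in Section \ref{inst} (K3, elliptic, Kanev, double octic, quintic, and their blow-ups) by the scheme that verifies Conjecture \ref{conj1}, refined to track the variable $y$; throughout, the strategy mirrors \cite{GNY3, GK1, GK2, GK3}. The first step is to express the twisted virtual $\chi_y$-genus in terms of algebraic Donaldson invariants. By the virtual Hirzebruch--Riemann--Roch theorem \cite{FG}, each $\chi^{\vir}(M, \Lambda^p\Omega^{\vir}_M \otimes \mu(L))$ is the integral over $[M]^{\vir}$ of a universal polynomial in the Chern classes of $T_M^{\vir}$ and $c_1(\mu(L))$, and the generating series $\sum_p(-y)^p\chi^{\vir}(M, \Lambda^p\Omega^{\vir}_M \otimes \mu(L))$ is exactly the integral against $[M]^{\vir}$ of the $\chi_{-y}$-genus Hirzebruch class of $T_M^{\vir}$ twisted by $e^{c_1(\mu(L))}$. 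Since $T_M^{\vir}=R\hom_{\pi_M}(\EE,\EE)_0[1]$, Grothendieck--Riemann--Roch along $\pi_M$ rewrites this series as a polynomial in slant products of $\ch(\EE)$, i.e.\ as a descendent Donaldson invariant of $S$ of the type handled by Mochizuki's formula. (When $\mu(L)$ is not an honest line bundle, this series is taken as the definition, per the footnote above.)

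The second step is to apply \cite[Thm.~1.4.6]{Moc}, which rewrites such an invariant as a sum of contributions indexed by Seiberg--Witten basic classes $a \in H^2(S,\Z)$ --- producing the outer sum $\sum_a(-1)^{c_1a}\SW(a)(\cdots)$ of the conjecture --- plus tautological integrals over products of Hilbert schemes of points $S^{[n_1]}\times S^{[n_2]}$, with integrands built from the universal subschemes, $\EE$, and the excess bundles appearing in Mochizuki's wall-crossing. Collecting everything into a generating function in $x$ (recording $c_2$, equivalently $\vd$) and $y$, the third step is to show that these Hilbert-scheme integrals combine into a power series that, after normalization, is a cobordism invariant of the relevant data: it has universal shape, a product of fixed power series in $x,y$ raised to the exponents $\chi(\O_S), K_S^2, L^2, LK_S, c_1^2, c_1K_S, c_1L$ (and $aK_S, aL, a^2, ac_1$ inside the Seiberg--Witten sum). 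This universality, resting on the structure theory of tautological integrals over Hilbert schemes of points, reduces the computation to toric surfaces.

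The fourth step is to evaluate these universal series on $\PP^2$ and $\PP^1\times\PP^1$. The torus action on $S$ lifts to $S^{[n_i]}$, to $|\beta|$, and to $\EE$, so equivariant localization expresses each integral as a finite sum over torus-fixed points (pairs of monomial ideals, together with torus-fixed divisors for the Seiberg--Witten-type terms), yielding explicit rational functions in the equivariant parameters whose non-equivariant limits give each series to any prescribed order in $x$. Carrying this out to the order dictated by $\vd$ for each target surface, the fifth and final step is to reassemble the universal series with the Seiberg--Witten contributions and verify, order by order in $x$, that the result coincides with the product of $\overline\eta$-- and $\theta_3$--factors in the statement; for K3 surfaces ($K_S=0$, with $\SW(0)=1$ the only nonzero Seiberg--Witten invariant) this product collapses and the identity can be read off directly, while the blow-up cases follow from the blow-up behaviour of both sides.

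The main obstacle is the third step: one must cleanly separate the Hilbert-scheme part of Mochizuki's formula from the Seiberg--Witten part and prove that the resulting $x$-series, now carrying the extra variable $y$ uniformly, genuinely defines a cobordism invariant (equivalently, lies in a finitely generated module over the universal ring on which the Chern numbers act as coordinates), so that its values on $\PP^2$ and $\PP^1\times\PP^1$ determine it. Propagating the $y$-weights through the $\chi_{-y}$-genus Hirzebruch class and through the excess bundles in Mochizuki's formula makes the bookkeeping markedly heavier than in the $y$-free case of Conjecture \ref{conj1}. A secondary, more cosmetic difficulty is recognizing the closed theta/eta expression from finitely many computed coefficients; in practice one first conjectures its shape --- guided by the Verlinde formulae on curves, by the $K$-theoretic Vafa--Witten picture, and by Conjecture \ref{conj1} --- and then verifies it.
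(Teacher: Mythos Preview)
Your proposal is essentially correct and follows the paper's approach step for step: virtual Hirzebruch--Riemann--Roch (Lemma~\ref{redtoDon}) reduces to descendent Donaldson invariants, Mochizuki's formula (Theorem~\ref{mocthm}) reduces to Seiberg--Witten-weighted Hilbert-scheme integrals, universality (Lemmas~\ref{mult}, \ref{A}, Proposition~\ref{mainprop}) reduces to toric surfaces, and localization on $\PP^2$ and $\PP^1\times\PP^1$ (Section~\ref{toricA}) determines the universal series $A_i(y,q)$ to finite order for comparison. A few small slips to correct: the linear system $|\beta|$ and ``torus-fixed divisors'' you invoke in step four belong to the monopole branch of Conjecture~\ref{conj3}, not to the instanton side treated here (the localization is purely on $S^{[n_1]}\times S^{[n_2]}$ with the Seiberg--Witten invariants entering only as prescribed coefficients from Mochizuki's formula); the paper does not deduce the blow-up verifications from a blow-up formula but runs them directly through the universal series; and the K3 case is proved in the companion paper \cite{Got2} via twisted elliptic genera of Hilbert schemes (Theorem~\ref{twistedchiy}), not ``read off directly'' from the toric computation.
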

 \begin{conjecture} \label{conj3}
Let $S$ be a smooth projective surface with $p_g(S)>0$, $b_1(S)=0$, and $L \in \Pic(S)$. Let $H,c_1,c_2$ be chosen such that there are no rank 2 strictly Gieseker $H$-semistable Higgs sheaves on $S$ with Chern classes $c_1,c_2$. Let  $N:=N_S^H(2,c_1,c_2)$ and let $\vd$ be defined by \eqref{vd}. Then the monopole contribution to $\chi(N, \widehat{\O}^{\vir}_N \otimes \mu(L))$ equals the coefficient of $(-x)^{\vd}$ of
\begin{align*}
&\left( \prod_{n=1}^{\infty} \frac{1}{(1-x^{8n})^{10}(1-x^{8n}y^2)(1-x^{8n}y^{-2})}\right)^{\chi(\O_S)} \left( \frac{\overline \eta(x^4)^2}{\theta_2(x^4,y)}\right)^{K_S^2}\\
&\cdot\left(\prod_{n=1}^{\infty} \left(\frac{(1-x^{8n})^2}{(1-x^{8n}y^2)(1-x^{8n}y^{-2})}\right)^{n^2}\right)^{2L^2}
\left( \prod_{n=1}^{\infty}\left( \frac{1- x^{4n}y^{-1}}{1- x^{4n}y}\right)^n\right)^{2LK_S} \\
&\cdot\sum_{a\in H^2(S,\Z)} \delta_{c_1,K_S-a} \, \SW(a) \, k_a  \left(\frac{\theta_2(x^4,y)}{\theta_3(x^4,y)}\right)^{aK_S} \left( \prod_{n=1}^{\infty}\left( \frac{1+x^{8n-4}y^{-1}}{1+ x^{8n-4}y}\right)^{2n-1}\right)^{2aL} \\
&\cdot \left(\prod_{n=1}^{\infty}\left( \frac{1+x^{8n}y^{-1}}{1+x^{8n}y}\right)^n\right)^{4L(K_S-a)} \cdot \left(\prod_{n=1}^{\infty} \left(\frac{1+x^{4n}y^{-1}}{1+x^{4n} y}\right)^{n}\right)^{LK_S},
\end{align*}
where $k_a := x^{- 3 \chi(\O_S)} (y^{\frac{1}{2}} + y^{-\frac{1}{2}})^{- \chi(\O_S)} y^{\frac{1}{2} L(a-K_S)}$.
\end{conjecture}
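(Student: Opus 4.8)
The plan is to verify Conjecture~\ref{conj3} in the same families treated for Conjectures~\ref{conj1} and~\ref{conj2} --- K3 surfaces, elliptic surfaces, Kanev and quintic surfaces, double covers of $\PP^2$ branched over a smooth octic, and blow-ups thereof --- by reducing the monopole contribution to tautological integrals on products of Hilbert schemes of points, proving that their generating series is universal and multiplicative, and then computing the resulting universal series by torus localization, exactly as for the instanton branch.

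First I would feed in the Gholampour--Thomas description recalled in Section~\ref{mon}: the monopole branch $M^{\mon}$ is the disjoint union, over the relevant discrete data $(n_0,n_1,\beta)$, of the incidence loci $S^{[n_0,n_1]}_\beta\subset A:=S^{[n_0]}\times S^{[n_1]}\times|\beta|$, each realized as a (reduced) degeneracy locus of a section of the bundle on $A$ with fibre $\Hom(\I_{Z_0}(-C),\O_{Z_1})$, and each carrying the $\C^*$-localized perfect obstruction theory coming from that presentation. On such a component the universal sheaf is $\EE=\I_{Z_0}\oplus\big(\I_{Z_1}\otimes\O(\mathcal{C})\big)\otimes\t^{-1}$ up to the twist fixing $c_1$; hence $c_1^{\C^*}(\EE)$, $c_2^{\C^*}(\EE)$, the insertion $\mu(L)$ of \eqref{muinsertmon}, the virtual normal bundle $N^{\vir}$ (built from $\Ext$-groups between the two rank-one eigensheaves with their $\t$-weights), and the canonical square root $\sqrt{K^{\vir}_N}$ of \cite{Tho} all have explicit expressions in the tautological Chern classes attached to $\I_{Z_0}$ and $\I_{Z_1}$, the tautological class of the universal divisor on $|\beta|$, and the equivariant parameter $t=c_1^{\C^*}(\t)$ (with $y=e^t$). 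Virtual $\C^*$-localization together with virtual Hirzebruch--Riemann--Roch then rewrites $\chi\big(S^{[n_0,n_1]}_\beta,\tfrac{\O^{\vir}}{\Lambda_{-1}(N^{\vir})^\vee}\otimes\sqrt{K^{\vir}_N}\otimes\mu(L)\big)$ as an integral over $A$ of a Todd genus times an explicit $x,y$-dependent tautological class --- precisely the type of expression handled in \cite{GK1,GK2,GK3}.

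The core of the argument is a universality statement: the generating series $\sum_{n_0,n_1,\beta}(\pm x)^{\vd}\int_A(\cdots)$ depends on $S$ only through $\chi(\O_S)$, $K_S^2$, $L^2$, $LK_S$ and, for each relevant class $a$, through $aK_S$ and $aL$, and is multiplicative in these numbers, so it is a product of universal power series in $x,y$ raised to these exponents. I would establish this by the Ellingsrud--G\"ottsche--Lehn cobordism method: the integrands push forward to polynomials in the Chern numbers of $(S,L,\O_S(\beta))$, the bundle $\O_S(\beta)$ being controlled by the discrete data and the Seiberg--Witten constraint; the cobordism ring of pairs (surface, line bundle) is generated in complex dimension two by $(\PP^2,\O)$, $(\PP^2,\O(1))$, $(\PP^1\times\PP^1,\O)$, $(\PP^1\times\PP^1,\O(1,0))$; and disjoint unions turn products of Hilbert schemes and linear systems into the claimed multiplicative structure. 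The ``bulk'' universal series (exponents $\chi(\O_S),K_S^2,L^2,LK_S$) are then pinned down to high order in $x$ by localization on the toric Hilbert schemes attached to these four pairs, and the $a$-dependent factors --- where the Seiberg--Witten localization forces $\beta$ to be essentially $K_S-a$ (whence $\delta_{c_1,K_S-a}$) and $\SW(a)$ emerges as the residual Euler-class integral over the compact Seiberg--Witten locus, following \cite{GT1,GT2} and the Vafa--Witten literature --- are read off from a family with nontrivial $\SW$, e.g.\ elliptic surfaces. The specialization $L=\O_S$ (reproducing the known $K$-theoretic Vafa--Witten monopole computations) and the $H$-independence of the conjectural expression provide consistency checks.

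The main obstacle I expect is the bookkeeping around the linear-system factor $|\beta|$: its dimension $h^0(\O_S(\beta))-1$ varies with $\beta$, the loci $S^{[n_0,n_1]}_\beta$ can have components of different virtual dimension (Remark~\ref{differentvd}), and one must correctly single out which triples $(n_0,n_1,\beta)$ contribute and repackage them into a single series in $x$ with the exact prefactors (the overall powers of $x$, the $(-x)^{\vd}$, and the constants $k_a$). Closely related is the need to control $h^1$ and $h^2$ of $\O_S(\beta)$ so that universality genuinely sees only $aK_S$ and $aL$, and to handle $\sqrt{K^{\vir}_N}$ over the possibly non-reduced or disconnected fixed components --- this is where the half-integral $y^{1/2}$-arguments of $\theta_3$ and the odd exponents $2n-1$ come from. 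As for Conjectures~\ref{conj1} and~\ref{conj2}, the realistic target is to match the universal series with the modular-type expression in the conjecture to all computable orders, not to derive the closed form from first principles.
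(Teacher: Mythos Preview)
Your overall plan---Gholampour--Thomas description, universality via cobordism, then toric localization---is the paper's approach, and the count of six universal series with exponents $\chi(\O_S),K_S^2,L^2,LK_S,aK_S,aL$ is exactly Theorem~\ref{thm1}. But two points of your proposal miss how the paper actually resolves the difficulties you flag.

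First, the ``main obstacle'' you anticipate---bookkeeping over the linear-system factor $|\beta|$ and controlling $h^i(\O_S(\beta))$---does not arise. Gholampour--Thomas's Theorem~\ref{GT} does more than describe the degeneracy-locus obstruction theory: it gives
\[
\iota_*[S_\beta^{[n_0,n_1]}]^{\vir}=\SW(\beta)\cdot e\big(R\Gamma(\beta)\otimes\O-R\hom_\pi(\I_0,\I_1(\beta))\big)
\]
as a class supported on $S^{[n_0]}\times S^{[n_1]}\times\{\pt\}$. So the Seiberg--Witten invariant is already factored out, the integrals live on $S^{[n_0]}\times S^{[n_1]}$ alone, and $\O(1)$ on $|\beta|$ restricts trivially. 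The generating series $\sfZ_S^{\mon}(L,\beta,y,q)$ is then manifestly a tautological integral on products of Hilbert schemes in the data $(S,L,\beta)$, and universality (Lemma~\ref{B}) yields seven series $B_1,\ldots,B_7$ in the Chern numbers $L^2,L\beta,\beta^2,LK_S,\beta K_S,K_S^2,\chi(\O_S)$; the reduction to six uses $\beta^2=\beta K_S$ for SW basic classes. One further input you omit is Laarakker's vanishing (Remark~\ref{ignorestab}): components $S_\beta^{[n_0,n_1]}$ satisfying \eqref{n0n1beta} but \emph{not} lying in $M^{\mono}$ have $[S_\beta^{[n_0,n_1]}]^{\vir}=0$, which is what lets you sum over all numerically admissible $(n_0,n_1,\beta)$ and hence obtain a clean generating series.

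Second, and as a consequence, your opening sentence---``verify Conjecture~\ref{conj3} in the same families treated for Conjectures~\ref{conj1} and~\ref{conj2}''---is a misreading of the logic. On the instanton side, the case-by-case checks are forced by Mochizuki's hypotheses (i)--(iii). On the monopole side there are no such hypotheses: Theorem~\ref{thm1} is proved unconditionally, so once the $C_i$ are determined (by localization on seven toric triples $(S,L,\beta)$, not on elliptic or general-type surfaces) Conjecture~\ref{conj3} is verified to that $q$-order for \emph{every} surface satisfying the standing assumptions. No family-by-family verification is needed or performed.
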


Together these two conjectures give a Verlinde type formula for the moduli space of Higgs sheaves on a surface $S$ satisfying $b_1(S) = 0$ and $p_g(S)>0$. Moreover our formulae interpolate between the following two invariants:
\begin{itemize}
\item \textbf{$K$-theoretic Donaldson invariants.} After replacing $x$ by $x y^{\frac{1}{2}}$ in the formula of Conjecture \ref{conj2}, we can set $y=0$. This replacement  provides a formula for $\chi_{-y}^{\vir}(M,\mu(L))$ and setting $y=0$ implies the formula for $K$-theoretic Donaldson invariants of Conjecture \ref{conj1}.
\item \textbf{$K$-theoretic Vafa-Witten invariants.} Putting $L=\O_S$ in Conjectures \ref{conj2} and \ref{conj3}, we obtain the conjectural formulae for $K$-theoretic Vafa-Witten invariants of \cite[Rem.~1.3, 1.7]{GK3}. 
\end{itemize}
In \cite[Appendix]{GK1}, the first named author and Nakajima conjectured a formula interpolating between Donaldson invariants and virtual Euler numbers of $M:=M_S^H(2,c_1,c_2)$. Conjecture \ref{conj2} also implies this formula (Section \ref{appl}).

Using the same strategy as for Conjecture \ref{conj1}, we verify Conjecture \ref{conj2} in many examples. On the other hand, for Conjecture \ref{conj3}, we \emph{prove} the universal dependence by presenting a variation on an argument of T.~Laarakker \cite{Laa2}, which in turn is an application of Gholampour-Thomas's description of the monopole virtual class in terms of nested Hilbert schemes \cite{GT1, GT2}. 

\begin{theorem} \label{thm1}
There exist universal series
$$
C_1(y,q),\ldots, C_6(y,q) \in 1+q \, \Q[y^{\frac{1}{2}}][[q]]
$$ 
with the following property. Let $S$ be a smooth projective surface with $p_g(S)>0$, $b_1(S)=0$, and $L \in \Pic(S)$. Let $H,c_1,c_2$ be chosen such that there are no rank 2 strictly Gieseker $H$-semistable Higgs sheaves on $S$ with Chern classes $c_1,c_2$. Let $N:=N_S^H(2,c_1,c_2)$ and let $\vd$ defined by \eqref{vd}. Then the monopole contribution to $\chi(N, \widehat{\O}^{\vir}_N \otimes \mu(L))$ equals the coefficient of $(-x)^{\vd}$ of 
\begin{align*}
&C_1(y,x^4)^{\chi(\O_S)} \, C_2(y,x^4)^{K_S^2} \, C_3(y,x^4)^{L^2} \, C_4(y,x^4)^{LK_S} \\
&\cdot \sum_{a\in H^2(S,\Z)} \delta_{c_1,K_S-a} \, \SW(a) \, \ell_a \, C_5(y,x^4)^{aK_S} \, C_6(y,x^4)^{aL},
\end{align*}
where $\ell_a := x^{a K_S - K_S^2 - 3 \chi(\O_S)} (y^{\frac{1}{2}} + y^{-\frac{1}{2}})^{a K_S - K_S^2 - \chi(\O_S)} y^{\frac{1}{2} L(a-K_S)}$.
 \end{theorem}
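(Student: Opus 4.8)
The plan is to follow Laarakker's strategy for $K$-theoretic Vafa--Witten invariants \cite{Laa2} (which is the case $L=\O_S$), carrying the $\mu(L)$-insertion through every step; the geometric input is the Gholampour--Thomas description of the monopole components recalled above \cite{GT1,GT2}. First I would use that every connected component $M'\subset M^{\mono}$ is a nested Hilbert scheme $S_\beta^{[n_0,n_1]}\subset S^{[n_0]}\times S^{[n_1]}\times|\beta|$, with $\C^*$-localized perfect obstruction theory obtained by reducing the one on the smooth ambient product. On $M'\times S$ the universal Higgs sheaf splits as $\EE=\EE_0\oplus\EE_1\otimes\t^{-1}$, with $\EE_i$ a twist of the universal ideal sheaf on $S^{[n_i]}\times S$, so that all $\C^*$-equivariant classes entering virtual localization --- $c_1^{\C^*}(\EE)$ and $c_2^{\C^*}(\EE)$ from \eqref{muinsertmon}, the virtual normal bundle, and $\sqrt{K_N^{\vir}}$ --- become explicit polynomials in the tautological classes of the $\EE_i$, the classes of $L$, $K_S$, the divisor class $\beta$, and the equivariant parameter $t$. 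Applying virtual $\C^*$-localization together with the virtual Hirzebruch--Riemann--Roch formula exactly as for Vafa--Witten in \cite{Tho} then rewrites the monopole contribution to $\chi(N,\widehat{\O}^{\vir}_N\otimes\mu(L))$ as a sum, over $n_0,n_1,\beta$ and the admissible eigensheaf Chern classes, of tautological integrals over $S^{[n_0]}\times S^{[n_1]}\times|\beta|$ whose integrand is a $K$-theory class from the reduced obstruction theory times $e^{\mu(c_1(L))}$.

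Next I would feed these integrals into the universality of tautological integrals over products of Hilbert schemes of points, in the form used by Laarakker \cite{Laa2} (going back to Ellingsrud--G\"ottsche--Lehn): the generating function in $n_0,n_1$ is a product of universal power series raised to the Chern numbers that can occur. Because $S$ is a surface its Chern numbers are $\chi(\O_S)$ and $K_S^2$, and $b_1(S)=0$ ensures no further topological invariant of $S$ enters; the line bundle $L$ contributes $L^2$ and $LK_S$; the divisor class $\beta$ contributes $\beta^2,\beta K_S,\beta L$. The integral over the linear system $|\beta|$ together with its incidence structure is, by the algebraic description of Seiberg--Witten invariants of surfaces with $p_g(S)>0$ (D\"urr--Kabanov--Okonek; cf.\ \cite{GT1,GT2}), exactly what assembles into $\SW(a)$, where $a$ is the class cut out by $\beta$ and $c_1$, the solvability of the congruence determining the eigensheaf Chern classes being recorded by $\delta_{c_1,K_S-a}$. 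Moreover on the support of $\SW$ the dimension constraint forces $a^2=aK_S$, so $\beta^2$ and $\beta K_S$ collapse onto the single exponent $aK_S$ and $\beta L$ onto $aL$ (modulo terms proportional to $L^2,LK_S$). This leaves precisely six universal series $C_1,\dots,C_6$, attached to $\chi(\O_S),K_S^2,L^2,LK_S,aK_S,aL$; the bare prefactor $\ell_a$ --- the powers of $x$, of $y^{\frac12}+y^{-\frac12}$, and the monomial $y^{\frac12 L(a-K_S)}$ --- comes from the canonical choice of $\sqrt{K_N^{\vir}}$ over the fixed locus and the normalizations of \cite{Tho}, while tracking $c_2$ produces the substitution $q=x^4$ and the extraction of the $(-x)^{\vd}$-coefficient. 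That $C_i(y,0)=1$ follows from the contribution of the stratum $n_0=n_1=0$, $\beta=0$.

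The one genuinely new ingredient beyond \cite{Laa2} is the bookkeeping of the $\mu(L)$-insertion. For this I would use that $\mu(L)$ is, as a class, \emph{linear} in $c_1(L)$, and that the class $c_2^{\C^*}(\EE)-\frac{1}{4}c_1^{\C^*}(\EE)^2$ into which $c_1(L)$ is paired in \eqref{muinsertmon} is the $\C^*$-equivariant Pontryagin-type class of the $\mathrm{PGL}_2$-data, hence unchanged under twisting $\EE$ by a line bundle; on the monopole branch this $\mathrm{PGL}_2$-data depends only on $\beta$ and the point data. Therefore $e^{\mu(c_1(L))}$ introduces $L$-dependence solely through $L^2$, $LK_S$ and $L\beta$, i.e.\ --- once $\beta$ is traded for $a$ --- through $L^2$, $LK_S$ and $La$, and the multiplicative structure of the previous step then forces this dependence to exponentiate as $C_3(y,x^4)^{L^2}\,C_4(y,x^4)^{LK_S}\,C_6(y,x^4)^{aL}$ with no residual coupling, which is the assertion of the theorem.

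The step I expect to be the main obstacle is the factorization in the second paragraph: showing that the double generating function in $n_0,n_1$, in its interaction with the linear-system factor, genuinely splits into the stated product with this exact separation of $\chi(\O_S),K_S^2,L^2,LK_S,aK_S,aL$ --- rather than merely into some universal function of all of them --- is the technical heart of Laarakker's argument, and the additional verification needed here is that inserting $e^{\mu(c_1(L))}$ preserves this multiplicativity.
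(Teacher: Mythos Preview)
Your proposal is correct and follows essentially the same route as the paper: Gholampour--Thomas's pushforward formula for $[S_\beta^{[n_0,n_1]}]^{\vir}$ supplies the $\SW(\beta)$ factor and reduces everything to tautological integrals on $S^{[n_0]}\times S^{[n_1]}$; multiplicativity plus EGL-type universality (Laarakker's argument with the $\mu(L)$-insertion carried along) yields seven universal series in the exponents $L^2,L\beta,\beta^2,LK_S,\beta K_S,K_S^2,\chi(\O_S)$; and the Seiberg--Witten constraint $\beta^2=\beta K_S$ collapses these to the six $C_i$. One small clarification: in the paper the Seiberg--Witten invariant does not arise from a separate integration over $|\beta|$ but is already the scalar prefactor in Gholampour--Thomas's formula for the pushforward of the virtual class to $S^{[n_0]}\times S^{[n_1]}\times\{\pt\}$, so your ``integral over the linear system'' step is subsumed in their theorem.
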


For $L=\O_S$ this was proved in \cite{Laa2} (actually, for $L=\O_S$, the analog of this theorem is proved in any rank \cite{Laa2}). Universality on the instanton branch is still open.
The universal series $C_i$ can be expressed in terms of intersection numbers on products of Hilbert schemes of points on surfaces. Again, these intersection numbers are determined on $\PP^2$ and $\PP^1 \times \PP^1$, where we calculate using localization. This way, we determine $C_i \mod q^{15}$ and we find a match with Conjecture \ref{conj2} (Section \ref{mon}). 

For $L = \O_S$, physicists \cite{VW} predict that the instanton and monopole generating functions of Conjectures \ref{conj2} and \ref{conj3} get swapped under the S-duality transformation $\tau \rightarrow -1/\tau$, where $q = \exp(2 \pi i \tau)$. See \cite{JK} for a recent proof of S-duality for K3 surfaces (for any prime rank). For general $L$, the connection between instanton and monopole contribution is less clear. However, the series depending on $L^2$ are related by $x \mapsto x^4$, $y \mapsto y^2$, $L \mapsto L^{\otimes 2}$ (and similarly for any rank in Section \ref{sec:higher}).

\subsection{K3 surfaces}

By adapting an argument from \cite{GNY2} combined with a new formula for twisted elliptic genera of Hilbert schemes of points on surfaces, the first named author proves Conjecture \ref{conj2} for K3 surfaces in \cite{Got2}. By adapting an argument of \cite{Laa2} combined with the above-mentioned formula for twisted elliptic genera of Hilbert schemes of points on surfaces, we prove the following (where the formula for $C_1$ was previously determined in \cite{Tho, Laa2}):
\begin{theorem} \label{thm2}
The universal functions $C_1(y,q), C_3(y,q)$ are given by
\begin{align*}
C_1(y,q) &= \prod_{n=1}^{\infty} \frac{1}{(1-q^{2n})^{10}(1-q^{2n}y^2)(1-q^{2n}y^{-2})}, \\
C_3(y,q) &= \prod_{n=1}^{\infty} \left(\frac{(1-q^{2n})^2}{(1-q^{2n}y^2)(1-q^{2n}y^{-2})}\right)^{2n^2}.
\end{align*}
\end{theorem}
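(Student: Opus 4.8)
The plan is to determine the universal series $C_1$ and $C_3$ by specializing the universality statement of Theorem \ref{thm1} to K3 surfaces, where the monopole contribution can be computed directly. On a K3 surface one has $K_S = 0$, $\chi(\O_S) = 2$, $b_1(S) = 0$, the unique Seiberg-Witten basic class is $a = 0$ with $\SW(0) = 1$, $\delta_{c_1,K_S-a} = \delta_{c_1,0}$ equals $1$ when $c_1$ is $2$-divisible and $0$ otherwise, and $\ell_0 = x^{-6}(y^{\frac{1}{2}}+y^{-\frac{1}{2}})^{-2}$. Since $K_S^2 = LK_S = aK_S = aL = 0$, the factors $C_2,C_4,C_5,C_6$ all disappear and Theorem \ref{thm1} asserts that, on a K3, the monopole contribution to $\chi(N,\widehat{\O}_N^{\vir}\otimes\mu(L))$ is the coefficient of $(-x)^{\vd}$ in
$$
C_1(y,x^4)^2 \, C_3(y,x^4)^{L^2} \, x^{-6}\,(y^{\frac{1}{2}}+y^{-\frac{1}{2}})^{-2}\, \delta_{c_1,0}.
$$
Hence it suffices to know $C_1$ together with $C_3^{L^2}$ for a single line bundle of positive square. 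The series $C_1$ governs the $L = \O_S$ specialization, namely the monopole contribution to $K$-theoretic Vafa-Witten invariants, and its product formula is the one already computed in \cite{Tho, Laa2}; this settles $C_1$.

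For $C_3$ I would fix a projective K3 surface $S$ carrying a line bundle $L$ with $L^2 > 0$ and compute the monopole contribution to $\chi(N,\widehat{\O}_N^{\vir}\otimes\mu(L))$ directly, following the method of \cite{Laa2}. By the Gholampour-Thomas description \cite{GT1, GT2} recalled in Section \ref{mon}, the monopole locus of $N_S^H(2,c_1,c_2)$ is a disjoint union of incidence schemes $S_{\beta}^{[n_0,n_1]} \subset S^{[n_0]} \times S^{[n_1]} \times |\beta|$, each realized as a degeneracy locus inside the smooth ambient space, with its $\C^*$-localized virtual structure sheaf, virtual normal bundle $N^{\vir}$, square root $\sqrt{K_N^{\vir}}$, and the restriction of $\mu(L)$ all expressed through tautological sheaves built from $\O_S$, $L$ and the universal subschemes over the two Hilbert schemes. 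After the substitution $y = e^t$, plugging these into $\chi\big(S_{\beta}^{[n_0,n_1]},\, \frac{1}{\Lambda_{-1}(N^{\vir})^{\vee}}\otimes\sqrt{K_N^{\vir}}\otimes\mu(L)\big)$ and summing over $n_0,n_1,\beta$ turns the monopole contribution into a generating series (in $x$) of Euler characteristics of tautological complexes on products of Hilbert schemes of points. The $\C^*$-weights in $\Lambda_{-1}(N^{\vir})^{\vee}$ and $\sqrt{K_N^{\vir}}$ reassemble the integrand into a twisted $\chi_y$-genus-type class, and the sums over $n_0$ and $n_1$ are then governed by a Dijkgraaf-Moore-Verlinde-Verlinde/Borcherds type product formula --- precisely the \emph{twisted} elliptic genus of Hilbert schemes of points, with the $\mu(L)$-insertion entering through the Chern character of the associated tautological line bundle. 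Feeding in the closed formula for these twisted elliptic genera (the new input also used in \cite{Got2}), the resulting infinite product factors into $\chi(\O_S)$-, $K_S^2$- and $L^2$-exponentiated pieces matching Theorem \ref{thm1}, and reading off the exponent of $L^2$ gives the stated product formula for $C_3$.

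Several points need care. On a K3 the relevant $\beta$ form an infinite family of effective classes with $\beta\equiv c_1\pmod 2$, and one must integrate uniformly over each linear system $|\beta|\cong\PP^{\beta^2/2+1}$; one should also check that, because $K_S = 0$ and $a = 0$, the associated theta-type sum degenerates, consistently with the disappearance of $\theta_2$ and $\theta_3$ from Conjecture \ref{conj3} in the K3 case. One must carry the canonical square root $\sqrt{K_N^{\vir}}$, including its half-integral powers of $y$, across all components so that the outcome is manifestly invariant under $y\leftrightarrow y^{-1}$ and reproduces $\ell_0$. The step I expect to be the main obstacle is establishing, and then applying in exactly the form needed (with the tautological line-bundle twist), the closed product formula for twisted elliptic genera of Hilbert schemes of points, together with the bookkeeping of all powers of $x$ and $y$ that it entails. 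The independent computation of $C_1,\dots,C_6$ modulo $q^{15}$ by localization on $\PP^2$ and $\PP^1\times\PP^1$ carried out in Section \ref{mon}, and its agreement with Conjecture \ref{conj2}, provides a consistency check on this bookkeeping.
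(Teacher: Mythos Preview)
Your overall strategy—specialize the universality of Theorem~\ref{thm1} to a K3 surface and use the closed formula for twisted genera of Hilbert schemes from \cite{Got2}—is the right one and matches the paper. But two of the key technical reductions are missing from your outline, and one of your stated concerns reflects a genuine misunderstanding.

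First, your worry that ``on a K3 the relevant $\beta$ form an infinite family of effective classes'' over which one must integrate is misplaced. By Theorem~\ref{GT}, the pushforward $\iota_*[S_\beta^{[n_0,n_1]}]^{\vir}$ carries the factor $\SW(\beta)$, and on a K3 surface the only Seiberg-Witten basic class is $\beta=0$. So only the single class $\beta=0$ contributes, and there is no linear system to integrate over at all.

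Second, and more seriously, you treat the sums over $n_0$ and $n_1$ as separate and plan to evaluate integrals over products $S^{[n_0]}\times S^{[n_1]}$. The paper's proof does not do this. The crucial observation (Laarakker's equation~\eqref{diagcycle}) is that for $\beta=0$ on a K3 one has $\iota_*[S_0^{[n_0,n_1]}]^{\vir}=0$ unless $n_0=n_1=n$, in which case it equals $\Delta_*[S^{[n]}]$ with $\Delta$ the diagonal. Together with the simplification~\eqref{diagV} of $\Delta^*V_{n,n}$ in terms of $T_{S^{[n]}}$ alone, and an explicit manipulation using identities from \cite{Tho} (in particular the formula for the canonical square root and \eqref{2.28}), the integrand collapses to $\t^{2n}\cdot\Lambda_{-1}(\Omega_{S^{[n]}}\otimes\t^{-2})$ on a \emph{single} Hilbert scheme. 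This is precisely what makes $\sfZ_S^{\mon}(L,0,y,q)$ a generating series of twisted $\chi_y$-genera
\[
\sfZ_S^{\mon}(L,0,y,q)=\sum_{n\ge 0} y^{-2n}\,\chi\bigl(S^{[n]},\Lambda_{-y^2}\Omega_{S^{[n]}}\otimes\mu(L^*\otimes L^*)\bigr)\,q^{2n},
\]
to which Theorem~\ref{twistedchiy} applies directly. Without the diagonal reduction your plan of ``sums over $n_0$ and $n_1$ governed by a DMVV/Borcherds type product'' has no mechanism for producing a formula on $S^{[n]}$, and the twisted elliptic genus input would not hook in. So the step you flagged as the main obstacle (the product formula from \cite{Got2}) is in fact the easy endgame; the missing idea is the diagonal collapse~\eqref{diagcycle}--\eqref{diagV}.
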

In particular, Conjectures \ref{conj2} and \ref{conj3} hold for K3 surfaces.\footnote{The statement that Conjecture \ref{conj3} holds for K3 surfaces has less content than initially meets the eye. On a K3 surface, $\delta_{c_1,a}$ is only non-zero when $c_1$ is even. Assuming $\gcd(2,c_1H,\frac{1}{2}c_1^2-c_2)=1$, which guarantees ``stable=semistable'', implies $c_2$ is odd. Hence the coefficient of $(-1)x^{\vd}$ of the conjectured expression is always zero. Indeed ``stable=semistable'' implies that the monopole branch is empty \cite[Prop.~7.4]{TT1}.} \\

\noindent \textbf{Acknowledgements.} We thank T.~Laarakker and R.~P.~Thomas for useful discussions. We thank Laarakker for providing his SAGE code for calculating the monopole contribution to Vafa-Witten invariants \cite{Laa2}, which allowed us to gather evidence for Conjecture \ref{conj3}. We also thank the anonymous referee for a through reading of the manuscript and useful suggestions.

\section{Instanton contribution and Donaldson invariants} \label{inst}

In this section we gather evidence for Conjectures \ref{conj1} and \ref{conj2} as follows:
\begin{itemize}
\item \textbf{Reduction to Donaldson invariants.} Express the invariants of Conjectures \ref{conj1} and \ref{conj2} in terms of Donaldson invariants of $S$.
\item \textbf{Reduction to Hilbert schemes.} Use Mochizuki's formula \cite[Thm.~1.4.6]{Moc} to express these invariants as intersection numbers on Hilbert schemes of points on $S$.
\item \textbf{Reduction to toric surfaces.} Show that the intersection numbers of the previous step are determined on $S = \PP^2$ and $\PP^1 \times \PP^1$, where they can be calculated using localization.
\end{itemize}
The final step allows us to calculate the invariants of Conjectures \ref{conj1} and \ref{conj2} and compare to our conjectured formulae. This strategy has been used by the first and second named author in the determination of the instanton contribution to rank 2 and 3 Vafa-Witten invariants and various refinements thereof \cite{GK1,GK2,GK3}. Mochizuki's formula was also used by the first named author and Nakajima-Yoshioka in their proof of the Witten conjecture for algebraic surfaces, which expresses (primary, rank 2) Donaldson invariants in terms of Seiberg-Witten invariants \cite{GNY3}.

\subsection{Donaldson invariants} \label{Doninv} 

Let $S$ be a smooth projective complex surface such that $b_1(S)=0$. Let $H$ be a polarization on $S$ and let $M:=M_S^H(r,c_1,c_2)$.\footnote{In this paragraph, $r>0$ is arbitrary and we do not require $p_g(S)>0$.} We assume there exist no rank $r$ strictly Gieseker $H$-semistable sheaves on $S$ with Chern classes $c_1, c_2$. For the moment, we also assume there exists a universal family $\EE$ on $M \times S$, though we get rid of this assumption in Remark \ref{dropunivfam}. For any $\alpha \in H^*(S,\Q)$ and $k \geq 0$, define $\mu(\alpha) \in H^*(M,\Q)$ as in \eqref{muinsert} and 
\begin{align*}
\tau_k(\alpha) := \ch_{k+2}(\EE) / \mathrm{PD}(\alpha) \in H^*(M,\Q).
\end{align*}
We refer to $\tau_k(\alpha)$ as a descendent insertion and call it primary when $k=0$. As mentioned in the introduction, if $L \in \Pic(S)$ satisfies $c_1(L)c_1 \in 2 \Z$, then there exists a line bundle on $M$, denoted by $\mu(L)$ and called ``a Donaldson line bundle'', whose class in cohomology is \eqref{muinsert} for $\alpha = c_1(L)$.

Consider the $K$-group $K^0(M)$ generated by locally free sheaves on $M$. For any rank $r$ vector bundle on $M$, define
$$
\Lambda_y V := \sum_{i=0}^{r} [\Lambda^i V] y^i \in K^0(M)[y], \qquad \Sym_y V := \sum_{i=0}^{\infty} [\Sym^i V] y^i \in K^0(M)[[y]].
$$
These expressions can be extended to complexes in $K^0(M)$ by setting $\Lambda_y(-V) = \Sym_{-y} V$ and $\Sym_y(-V) = \Lambda_{-y} V$. For any complex $E \in K^0(M)$, we define
\begin{equation} \label{defT}
\mathsf{X}_y(E) := \ch(\Lambda_y E^{\vee}) \, \td(E).
\end{equation}
Since $\Lambda_y(E \oplus F) = \Lambda_y E \otimes \Lambda_y F$, we obtain
$$
\mathsf{X}_y(E \oplus F) = \mathsf{X}_y(E) \, \mathsf{X}_y(F).
$$
Furthermore, for any $L \in \Pic(M)$ 
$$
\mathsf{X}_y(L) = \frac{L(1+ye^{-L})}{1-e^{-L}}.
$$
\begin{lemma} \label{redtoDon}
Let $S,H,r,c_1,c_2$ and $M:=M_S^H(r,c_1,c_2)$ be as above. Let $L \in \Pic(S)$. Then there exists a polynomial expression $P(\EE)$ in $y$ and certain descendent insertions $\tau_k(\alpha)$ and $\mu(c_1(L))$ such that
$$
\chi^{\vir}_y(M, \mu(L)) = \int_{[M]^{\vir}} \mathsf{X}_{y}(T_M^{\vir}) \, e^{\mu(c_1(L))} =  \int_{[M]^{\vir}} P(\E).
$$
\end{lemma}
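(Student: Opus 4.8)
The plan is to combine three standard ingredients: the virtual Hirzebruch--Riemann--Roch theorem of \cite{FG}, the multiplicativity of $\mathsf{X}_y$, and Grothendieck--Riemann--Roch applied to the universal sheaf $\EE$.

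First I would establish the middle equality, which is nothing but virtual HRR. For any presentation $T_M^{\vir}=E-F$ one sets $\Lambda_y\big((T_M^{\vir})^{\vee}\big):=\Lambda_y(E^{\vee})\cdot\Sym_{-y}(F^{\vee})\in K^0(M)[[y]]$, so that $\sum_p y^p\,\Lambda^p\Omega_M^{\vir}=\Lambda_y\big((T_M^{\vir})^{\vee}\big)$. Applying \cite[Cor.~3.4]{FG} to each $\chi^{\vir}(M,\Lambda^p\Omega_M^{\vir}\otimes\mu(L))$, multiplying by $y^p$ and summing over $p$ gives
\[
\chi_y^{\vir}(M,\mu(L))=\int_{[M]^{\vir}}\ch\big(\Lambda_y((T_M^{\vir})^{\vee})\big)\,e^{\mu(c_1(L))}\,\td(T_M^{\vir}),
\]
using $\ch(\mu(L))=e^{\mu(c_1(L))}$; by \eqref{defT} the integrand is $\mathsf{X}_y(T_M^{\vir})\,e^{\mu(c_1(L))}$. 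Exchanging $\sum_p$ with $\int_{[M]^{\vir}}$ is harmless since the integral only sees cohomological degree $2\vd$, and when $\mu(L)$ is not an actual line bundle the equality holds by definition.

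Next I would make the right-hand integrand explicit. Since $\mathsf{X}_y$ is multiplicative, the splitting principle together with $f(x):=\tfrac{x(1+ye^{-x})}{1-e^{-x}}=(1+y)\,(1+O(x))$ yields
\[
\mathsf{X}_y(T_M^{\vir})=(1+y)^{\vd}\exp\!\Big(\sum_{k\ge1}k!\,h_k(y)\,\ch_k(T_M^{\vir})\Big)
\]
for universal coefficients $h_k(y)$; as $[M]^{\vir}$ has dimension $\vd$, inside the integral $\mathsf{X}_y(T_M^{\vir})\,e^{\mu(c_1(L))}$ may be truncated to a polynomial in $y$, $\ch_1(T_M^{\vir}),\dots,\ch_{\vd}(T_M^{\vir})$ and $\mu(c_1(L))$. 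Then I compute the $\ch_k(T_M^{\vir})$ by Grothendieck--Riemann--Roch: using the trace splitting and $b_1(S)=0$ we have $T_M^{\vir}=\chi(\O_S)\,[\O_M]-R\hom_{\pi_M}(\EE,\EE)$ in $K^0(M)$, hence
\[
\ch(T_M^{\vir})=\chi(\O_S)-\pi_{M*}\big(\ch(\EE^{\vee})\,\ch(\EE)\cdot\pi_S^{*}\td(T_S)\big).
\]
Expanding the right-hand side as a polynomial in the $\ch_j(\EE)$ and in $\pi_S^{*}\td_c(T_S)$, and using the Künneth decomposition $H^{*}(M\times S,\Q)=H^{*}(M,\Q)\otimes H^{*}(S,\Q)$ with $H^{\mathrm{odd}}(S,\Q)=0$, every push-forward $\pi_{M*}\big(\prod_a\ch_{j_a}(\EE)\cdot\pi_S^{*}\eta\big)$ becomes a polynomial --- with coefficients intersection numbers of $c_1$, $c_2$, $K_S$ and $\chi(\O_S)$ --- in the slant products $\pi_{M*}\big(\ch_j(\EE)\cdot\pi_S^{*}\alpha_b\big)$, $\{\alpha_b\}$ a fixed $\Q$-basis of $H^{\mathrm{ev}}(S,\Q)$. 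For $j\ge2$ these are by definition the descendent insertions $\tau_{j-2}(\alpha_b)$, and, exactly as in the discussion after \eqref{muinsert}, the combination $c_2(\EE)-\tfrac14 c_1(\EE)^2$ reassembles into $\mu$-insertions. Feeding all of this into the truncated $\mathsf{X}_y$ together with the factor $e^{\mu(c_1(L))}$ produces the desired polynomial $P(\EE)$.

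The hard part is purely organizational: the bookkeeping forced by the fact that $\EE$ exists globally only up to twisting by $\Pic(M\times S)$, so the slant products of $\ch_0(\EE)$ and $\ch_1(\EE)$ --- e.g.\ the divisor $\pi_{M*}(c_1(\EE)\cdot\pi_S^{*}[\pt])$ --- are not intrinsic to $M$, whereas $\ch(T_M^{\vir})$ and $\mu(c_1(L))$ are. I would handle this by running the GRR computation with the twist-invariant normalization $\ch(\EE)\,e^{-c_1(\EE)/2}$ (cf.\ the globally defined $\ch(\EE\otimes\EE\otimes\det(\EE)^{\vee})$ after \eqref{muinsert}), whose degree-one part vanishes and whose degree-two part is $-(c_2(\EE)-\tfrac14 c_1(\EE)^2)$, so that only intrinsic slant products --- the $\tau_k(\alpha)$ and the $\mu$-insertions --- survive; that the $\ch_0$- and $\ch_1$-contributions cancel against one another is the mechanism already used in \cite[Ch.~8]{HL} and \cite{Moc}. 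This device is also what lets one drop the provisional assumption that a universal family exists (cf.\ Remark~\ref{dropunivfam}). Apart from this, the argument is formal, the only nontrivial external input being the virtual HRR theorem of \cite{FG}.
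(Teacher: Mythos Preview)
Your proposal is correct and follows essentially the same route as the paper: virtual Hirzebruch--Riemann--Roch from \cite{FG} for the first equality, then Grothendieck--Riemann--Roch and K\"unneth applied to $\ch(T_M^{\vir})=\ch(R\hom_{\pi_M}(\EE,\EE)_0[1])$ for the second. The paper's own proof is terse, deferring the second step entirely to \cite[Prop.~2.1]{GK1} and merely noting that the presence of $e^{\mu(c_1(L))}$ adds $\mu(c_1(L))$ to the list of insertions; you have simply unpacked that reference. Your discussion of the twist-ambiguity in $\EE$ is more careful than the lemma strictly requires---the paper assumes a universal sheaf exists and postpones this issue to Remark~\ref{dropunivfam}---but it is correct and does no harm.
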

\begin{proof}
The first equality is the virtual Hirzebruch-Riemann-Roch theorem \cite[Cor.~3.4]{FG} (or the definition of our invariants when the Donaldson line bundle $\mu(L)$ does not exist on $M$). The second equality was proved for $L = \O_S$ in \cite[Prop.~2.1]{GK1} by applying Grothendieck-Riemann-Roch and the K\"unneth formula to
$$
\ch(T_M^\vir) = \ch(R \hom_{\pi_M}(\EE,\EE)_0[1]).
$$
The argument for any $L$ is the same with $P(\E)$ now involving $\mu(c_1(L))$.
\end{proof}

\subsection{Mochizuki's formula} 

We recall Mochizuki's formula \cite[Thm.~1.4.6]{Moc}. 

Let $S^{[n]}$ be the Hilbert scheme of $n$ points on $S$. On $S^{[n_1]} \times S^{[n_2]} \times S$ we have (pull-backs of) the universal ideal sheaves $\I_1$ and $\I_2$ from both factors. For any $M \in \Pic(S)$, on $S^{[n_1]} \times S^{[n_2]}$ we have (pull-backs of) the tautological bundles $M^{[n_1]}$ and $M^{[n_2]}$ from both factors.
We endow $S^{[n_1]} \times S^{[n_2]}$ with the trivial $\C^*$-action and denote the positive generator of the character group of $\C^*$ by $\s$. Define $s := c_1^{\C^*}(\s)$, then 
$$
H^*(B\C^*,\Q) = H^*_{\C^*}(\pt,\Q) \cong \Q[s].
$$

Fix $L \in \Pic(S)$ and let $P(\E)$ be any polynomial in $\mu(c_1(L))$ and descendent insertions $\tau_{k}(\alpha)$. We assume $P(\E)$ arises from a polynomial expression in $\mu(c_1(L))$ and the Chern classes of $T^{\vir}_{M}$ (e.g.~such as in Proposition \ref{redtoDon}). Let $A^1(S)$ be the Chow group of codimension 1 cycles up to linear equivalence, then for any $a_1, a_2 \in A^1(S)$ and $n_1, n_2 >0$, we define (following Mochizuki) 
\begin{align} 
\begin{split} \label{Psi}
&\Psi(L,a_1,a_2,n_1,n_2) := \\
&\Coeff_{s^0} \Bigg( \frac{P(\I_1(a_1) \otimes \s^{-1} \oplus \I_2(a_2) \otimes \s)}{Q(\I_1(a_1) \otimes \s^{-1}, \I_2(a_2) \otimes \s)} \frac{e(\O(a_1)^{[n_1]}) \, e(\O(a_2)^{[n_2]} \otimes \s^2)}{(2s)^{n_1+n_2 - \chi(\O_S)}} \Bigg).
\end{split}
\end{align}
We explain the notation. Here $\I_i(a_i)$ stands for $\I_i \otimes \pi_S^* \O(a_i)$ considered as a sheaf on $S^{[n_1]} \times S^{[n_2]} \times S$ pulled back along projection to $S^{[n_i]} \times S$. Similarly $\O(a_i)^{[n_i]}$ is viewed as a vector bundle on $S^{[n_1]} \times S^{[n_2]}$ pulled back along projection to $S^{[n_i]}$. Since $S^{[n_1]} \times S^{[n_2]}$ has a trivial $\C^*$-action, we can view $\O(a_i)^{[n_i]}$ as endowed with the trivial $\C^*$-equivariant structure. Moreover
$$
\O(a_2)^{[n_2]} \otimes \s^2
$$
denotes $\O(a_2)^{[n_2]}$ with $\C^*$-equivariant structure given by tensoring with character $\s^2$. Similarly, we endow $S^{[n_1]} \times S^{[n_2]} \times S$ with trivial $\C^*$-action, give $\I_i(a_i)$ the trivial $\C^*$-equivariant structure, and denote by
$$
\I_1(a_1) \otimes \s, \qquad \I_2(a_2) \otimes \s^{-1}
$$
the $\C^*$-equivariant sheaves obtained by tensoring with the characters $\s$ and $\s^{-1}$ respectively. We denote the $\C^*$-equivariant Euler class by $e(\cdot)$. Moreover, $P(\cdot)$ stands for the expression obtained from $P(\E)$ by formally replacing $\E$ by $\cdot$ and all Chern classes by $\C^*$-equivariant Chern classes.\footnote{The replacement of $\E$ by $\I_1(a_1) \otimes \s^{-1} \oplus \I_2(a_2) \otimes \s$ comes from Mochizuki's wall-crossing on the master space \cite{Moc}.} For any $\C^*$-equivariant sheaves $E_1$, $E_2$ on $S^{[n_1]} \times S^{[n_2]} \times S$ flat over $S^{[n_1]} \times S^{[n_2]}$
$$
Q(E_1,E_2) :=e(- R\hom_\pi(E_1,E_2) -  R\hom_\pi(E_2,E_1)), 
$$
where $\pi : S^{[n_1]} \times S^{[n_2]} \times S \rightarrow S^{[n_1]} \times S^{[n_2]}$ denotes projection. Finally $\Coeff_{s^0}(\cdot)$ takes the coefficient of $s^0$. We define 
$
\widetilde{\Psi}(L,a_1,a_2,n_1,n_2,s)
$
by expression \eqref{Psi} \emph{without} $\Coeff_{s^0}(\cdot)$. Let $c_1,c_2$ be a choice of Chern classes. For any decomposition $c_1 = a_1 + a_2$, we define (again following Mochizuki)
\begin{equation} \label{cA}
\cA(L,a_1,a_2,c_2) := \sum_{n_1 + n_2 = c_2 - a_1 a_2} \int_{S^{[n_1]} \times S^{[n_2]}} \Psi(L,a_1,a_2,n_1,n_2).
\end{equation}
Let $\widetilde{\cA}(L,a_1,a_2,c_2,s)$ be defined by the same expression with $\Psi$ replaced by $\widetilde{\Psi}$. \\

\begin{theorem}[Mochizuki] \label{mocthm}
Let $S$ be a smooth projective surface satisfying $b_1(S) = 0$, $p_g(S) >0$, and let $L \in \Pic(S)$. Let $H, c_1,c_2$ be chosen such that there are no rank 2 strictly Gieseker $H$-semistable sheaves on $S$ with Chern classes $c_1,c_2$ and such that a universal sheaf $\E$ on $M_{S}^{H}(2,c_1,c_2) \times S$ exists. Assume the following hold:
\begin{enumerate}
\item[(i)] $\chi(\ch) > 0$, where $\chi(\ch) := \int_S \ch \cdot \td(S)$ and $\ch = (2,c_1,\frac{1}{2}c_1^2 - c_2)$.
\item[(ii)] $p_{\ch} > p_{K_S}$, where $p_{\ch} = \chi(e^{mH} \cdot \ch)/2$ and $p_{K_S} = \chi(e^{mH} \cdot e^{K_S})$ are the reduced Hilbert polynomials of $\ch$ and $K_S$.
\item[(iii)] For all SW basic classes $a_1$ satisfying $a_1 H \leq (c_1 -a_1) H$ the inequality is strict. 
\end{enumerate}
Let $P(\E)$ be any polynomial in $\mu(c_1(L))$ and descendent insertions arising from a polynomial in $\mu(c_1(L))$ and Chern classes of $T^\vir_M$ (e.g.~as in Prop.~\ref{redtoDon}). Then
\begin{equation} \label{mocform}
\int_{[M_{S}^{H}(2,c_1,c_2)]^{\vir}} P(\E) = -2^{1-\chi(\ch)} \sum_{{\scriptsize{\begin{array}{c} c_1 = a_1 + a_2 \\ a_1 H < a_2 H \end{array}}}} \SW(a_1) \, \cA(L,a_1,a_2,c_2).
\end{equation}
\end{theorem}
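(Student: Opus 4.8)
The plan is to deduce this statement directly from Mochizuki's monograph, specifically \cite[Thm.~1.4.6]{Moc}: the formula \eqref{mocform} is his, and all that is really left for us is to check that the class of cohomological insertions $P(\E)$ allowed in our statement lies within the scope of that result and that our hypotheses (i)--(iii) coincide with the ones he requires. So the first step is bookkeeping: translate the definitions of $\Psi$, $\cA$, and the numerical conditions into Mochizuki's notation and confirm the match.

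For context, I would recall the architecture of Mochizuki's argument. One interpolates between the moduli space $M := M_S^H(2,c_1,c_2)$ and moduli spaces of \emph{pairs}. For a stability parameter $\delta \in \Q_{>0}$ one forms the moduli space $\mathcal M(\delta)$ of $\delta$-stable pairs $(E,s)$, where $E$ is a rank $2$ torsion-free sheaf with the given Chern classes (and a fixed orientation of $\det E$) and $s\colon \O_S \to E$ is a nonzero section, $\delta$-stability being Gieseker stability weighted by $\delta$ against subsheaves containing $\im(s)$. For $0<\delta\ll 1$ the section is ``free'' and $\mathcal M(\delta)\to M$ is a projective bundle with fiber $\PP(H^0(E))$; here hypothesis (i), $\chi(\ch)>0$, ensures $h^0(E)>0$, and hypothesis (ii), $p_{\ch}>p_{K_S}$, forces $h^2(E)=\Hom(E,K_S)^\vee=0$ so that $h^0(E)=\chi(\ch)$ is locally constant and the bundle (with its perfect obstruction theory) is the expected one — this comparison, together with the rank-$2$ structure, is what produces the overall prefactor $-2^{1-\chi(\ch)}$. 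As $\delta$ increases through the finitely many walls, $\mathcal M(\delta)$ undergoes flips; the change across the wall labelled by a decomposition $c_1=a_1+a_2$ with $a_1H<a_2H$ is computed by building an enhanced master space — a proper Deligne--Mumford stack with a $\C^*$-action and a perfect obstruction theory, whose fixed loci are the two sides of the wall together with a flipping component — and applying virtual localization \cite{GP}. The flipping component there parametrizes extensions $0\to I_{Z_0}(a_1)\to E\to I_{Z_1}(a_2)\to 0$ (decorated with a section), so it fibers over products of Hilbert schemes of points, and the residue of $1/e(N^{\vir})$ at this component is exactly the right-hand summand: the $\Coeff_{s^0}$, the denominator $Q$, and the powers of $2s$ in \eqref{Psi} are the equivariant Euler classes of the virtual normal bundle, while the weight $\SW(a_1)$ appears because the ``line-bundle side'' of the extension is a rank-one moduli problem whose virtual count is (in Mochizuki's convention) the algebraic Seiberg--Witten invariant — in particular only SW basic classes $a_1$ contribute. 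Hypothesis (iii), strictness of $a_1H\le(c_1-a_1)H$ for SW basic classes, guarantees that no wall is degenerate and that the master-space construction behaves as required.

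The one point genuinely needing attention beyond quoting \cite{Moc} is the shape of the insertions. Our $P(\E)$ is an arbitrary polynomial in $\mu(c_1(L))$ and descendents $\tau_k(\alpha)$, the latter assumed to come from a polynomial in $\mu(c_1(L))$ and the Chern classes of $T^\vir_M=R\hom_{\pi_M}(\EE,\EE)_0[1]$. By \eqref{muinsert} and by applying Grothendieck--Riemann--Roch to $\ch(T^\vir_M)$ exactly as in Lemma \ref{redtoDon}, every such class is a slant product of a polynomial in $\ch(\EE)$ against classes on $S$, i.e.\ a tautological class. Mochizuki's wall-crossing is functorial in any such universal class: it pulls back to $\mathcal M(\delta)$, restricts to each fixed locus of the master space, and there $\EE$ specializes to $\I_1(a_1)\otimes\s^{-1}\oplus\I_2(a_2)\otimes\s$; hence the same proof yields \eqref{mocform} with $P(\cdot)$ evaluated on that direct sum, which is precisely how $\Psi$ is defined in \eqref{Psi}.

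I expect the main obstacle to be purely a matter of matching conventions with \cite{Moc} rather than anything conceptual: the normalization of the pair invariants and hence the exact prefactor $-2^{1-\chi(\ch)}$, the identification of our $\C^*$-weights $\s^{\pm1}$ with his, and the precise form of the $Q$-factor and of the normalizing power $(2s)^{-n_1-n_2+\chi(\O_S)}$ in \eqref{Psi}. None of this is deep, but all of it must be lined up carefully for the formula to be literally correct in our conventions.
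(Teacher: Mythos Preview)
Your proposal is correct and matches the paper's own treatment: the paper does not prove this theorem at all but simply cites it as \cite[Thm.~1.4.6]{Moc}, with Remark~\ref{dropunivfam} afterwards noting the factor of $2$ from working on the stack of oriented sheaves and the removal of the universal-sheaf hypothesis. Your sketch of Mochizuki's master-space wall-crossing argument and your discussion of matching conventions are accurate and go well beyond what the paper itself supplies.
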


\begin{remark} \label{dropunivfam}
Assuming the existence of a universal sheaf $\E$ on $M \times S$, where $M:=M_{S}^{H}(2,c_1,c_2)$, is unnecessary. As remarked in the introduction, $T_M^{\vir}$ and $\mu(c_1(L))$ always exist, so the left-hand side of Mochizuki's formula is always defined. Mochizuki \cite{Moc} works over the Deligne-Mumford stack of oriented sheaves, which has a universal sheaf. This can be used to show that global existence of $\E$ on $M \times S$ can be dropped from the assumptions. In fact, when working on the stack, $P$ can be \emph{any} polynomial in descendent insertions defined using the universal sheaf on the stack. Also, since Mochizuki works on the stack, his formula and our version differ by a factor 2.
\end{remark}

\begin{remark} \label{assumpmocthm}
Conjecturally, assumptions (ii) and (iii) can be dropped from Theorem \ref{mocthm} \cite{GNY3, GK1, GK2, GK3}. Moreover, also conjecturally, in the sum in Mochizuki's formula the inequality $a_1 H < a_2 H$ can be dropped. Assumption (i) is necessary. 
\end{remark}

Suppose the assumptions of Theorem \ref{mocthm} are satisfied. Combining with Lemma \ref{redtoDon}, we find that $y^{-\frac{\vd}{2}} \chi^{\vir}_{-y}(M, \mu(L))$ is given by \eqref{mocform} with
\begin{equation} \label{choiceP}
P(\E) = y^{-\frac{\vd}{2}} \mathsf{X}_{-y}(-R\hom_\pi(\E,\E)_0) \, e^{\mu(c_1(L))},
\end{equation}
where $\E$ is replaced by
$$
\I_1(a_1) \otimes \s^{-1} \oplus \I_2(a_2) \otimes \s.
$$
We note that the rank of
\begin{equation*}  
-R\hom_\pi(\I_1(a_1) \otimes \s^{-1} \oplus \I_2(a_2) \otimes \s,\I_1(a_1) \otimes \s^{-1} \oplus \I_2(a_2) \otimes \s)_0
\end{equation*} 
equals the rank of $T^{\vir}_M = -R\hom_\pi(\E,\E)_0$.

\subsection{Universal series}

In this paragraph, $S$ is \emph{any} smooth projective surface, so we allow $p_g(S) = 0$. We want to study the intersection numbers \eqref{cA} with $P(\E)$ given by \eqref{choiceP}. Let $\mathsf{X}^{\C^*}_y(\cdot)$ denote the same expression as in \eqref{defT}, but with Chern character and Todd class replaced by \emph{$\C^*$-equivariant} Chern character and Todd class (recall that we endow $S^{[n_1]} \times S^{[n_2]}$ with trivial $\C^*$-action). Define
\begin{equation*} 
f(s,y):=y^{-\frac{1}{2}} \, \mathsf{X}^{\C^*}_{-y}(\s^2) = y^{-\frac{1}{2}} \frac{2s(1-ye^{-2s})}{1-e^{-2s}}
\end{equation*}
where the second equality follows from the properties listed in Section \ref{Doninv}. We write $\chi(a):=\chi(\O_S(a))$ for any $a \in A^1(S)$. For \emph{any} $L,a,c_1 \in A^1(S)$, we define
\begin{align*}
\sfZ_S^{\inst}(L,a,c_1,s,y,q) := &(2s)^{-\chi(\O_S)} \Big( \frac{2s}{f(s,y)} \Big)^{-\chi(c_1-2a)} \Big( \frac{-2s}{f(-s,y)} \Big)^{-\chi(2a-c_1)} e^{(c_1-2a)Ls} \\
&\cdot \sum_{n_1,n_2} q^{n_1+n_2} \int_{S^{[n_1]} \times S^{[n_2]}} \widetilde{\Psi}(L,a,c_1-a,n_1,n_2,s).
\end{align*}
The first line of this expression  is just a normalization factor, so
$$
\sfZ_S^{\inst}(L,a,c_1,s,y,q) \in 1 + q \, \Q[y^{\pm \frac{1}{2}}] (\!(s)\!) [[q]].
$$
We note that the definition of $\sfZ_S^{\inst}(L,a,c_1,s,y,q)$ makes sense for any \emph{possibly disconnected} smooth projective surface $S$ and $L,a,c_1 \in A^1(S)$.

\begin{lemma} \label{mult}
Let $S = S' \sqcup S''$, where $S', S''$ are (possible disconnected) smooth projective surfaces. Let $L,a,c_1 \in A^1(S)$ and define $L':=L|_{S'}$, $a':=a|_{S'}$, $c_1':=c_1|_{S'}$, $L'':=L|_{S''}$, $a'':=a|_{S''}$, and $c_1'':=c_1|_{S''}$. Then
$$
\sfZ_S^{\inst}(L,a,c_1,s,y,q) = \sfZ_{S'}^{\inst}(L',a',c_1',s,y,q) \, \sfZ_{S''}^{\inst}(L'',a'',c_1'',s,y,q).
$$
\end{lemma}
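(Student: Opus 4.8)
The plan is to reduce the statement to the decomposition of Hilbert schemes of points under a disjoint union,
$$
S^{[n]} \;=\; \coprod_{n'+n''=n} {S'}^{[n']}\times{S''}^{[n'']},
$$
together with the observation that every ingredient of the integrand $\widetilde{\Psi}$ of \eqref{Psi} restricts to an external (K\"unneth) product on each component of $S^{[n_1]}\times S^{[n_2]}$. Fix a splitting $n_1=n_1'+n_1''$, $n_2=n_2'+n_2''$ and let
$$
Y \;:=\; {S'}^{[n_1']}\times{S''}^{[n_1'']}\times{S'}^{[n_2']}\times{S''}^{[n_2'']}
$$
be the resulting open and closed component, with projections $p'\colon Y\to {S'}^{[n_1']}\times{S'}^{[n_2']}$ and $p''\colon Y\to {S''}^{[n_1'']}\times{S''}^{[n_2'']}$. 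Writing $\widetilde{\Psi}':=\widetilde{\Psi}(L',a',c_1'-a',n_1',n_2',s)$ and $\widetilde{\Psi}'':=\widetilde{\Psi}(L'',a'',c_1''-a'',n_1'',n_2'',s)$ for the corresponding integrands of $S'$ and $S''$, the heart of the argument is the identity
$$
\widetilde{\Psi}(L,a,c_1-a,n_1,n_2,s)\big|_Y \;=\; (p')^{*}\widetilde{\Psi}' \cdot (p'')^{*}\widetilde{\Psi}''.
$$

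To prove this I would treat each factor of \eqref{Psi} separately. Over $Y\times S=(Y\times S')\sqcup(Y\times S'')$ a length-$n_i$ subscheme of $S'\sqcup S''$ is the disjoint union of its parts on $S'$ and $S''$, so the universal ideal sheaf $\I_i$ restricts to (the pullback of) the universal ideal sheaf on ${S'}^{[n_i']}\times S'$ over $Y\times S'$ and on ${S''}^{[n_i'']}\times S''$ over $Y\times S''$; likewise the universal subscheme splits, so for any $M\in\Pic(S)$ the tautological bundle $M^{[n_i]}$ restricts on $Y$ to $(M|_{S'})^{[n_i']}\oplus(M|_{S''})^{[n_i'']}$. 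Hence $e(\O(a_i)^{[n_i]})\big|_Y$ factors, by multiplicativity of the Euler class. For the $\Ext$-type terms, any sheaf on $Y\times S$ pushed forward from $Y\times S'$ has vanishing derived $\hom$ into one pushed forward from $Y\times S''$, since the answer is supported on $(Y\times S')\cap(Y\times S'')=\emptyset$; therefore the $\C^*$-equivariant complex $R\hom_\pi(\I_1(a_1)\otimes\s^{-1}\oplus\I_2(a_2)\otimes\s,\I_1(a_1)\otimes\s^{-1}\oplus\I_2(a_2)\otimes\s)$ restricts on $Y$ to the direct sum of the two analogous complexes of $S'$ and $S''$ (each pulled back from the corresponding two-factor product) with no cross terms, and the same holds for its trace-free part because in $K$-theory $(\cdot)_0$ subtracts $R\pi_{*}\O_{Y\times S}=R\pi'_{*}\O_{Y\times S'}\oplus R\pi''_{*}\O_{Y\times S''}$. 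Consequently $Q(\cdot,\cdot)\big|_Y$ factors, and $P(\cdot)\big|_Y$ factors as well: by \eqref{choiceP}, $P$ is $y^{-\frac{\vd}{2}}\,\mathsf{X}_{-y}$ of this trace-free complex times $e^{\mu(c_1(L))}$, where $\mathsf{X}_y$ is multiplicative under direct sums (Section \ref{Doninv}), $\mu(c_1(L))\big|_Y=(p')^{*}\mu(c_1(L'))+(p'')^{*}\mu(c_1(L''))$ since $\pi_S^{*}c_1(L)$ and the Chern-class expression defining $\mu$ both restrict componentwise, and the exponent $-\tfrac{\vd}{2}$ is additive under $S=S'\sqcup S''$ because $\vd=4c_2-c_1^2-3\chi(\O_S)$ is (with $c_2=n_1+n_2+a_1a_2$, and intersection numbers and $\chi(\O_S)$ additive over components). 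Finally $(2s)^{-(n_1+n_2-\chi(\O_S))}$ splits because $n_i$ and $\chi(\O_S)$ are additive. This yields the displayed identity.

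Next I would integrate and reorganize. Regrouping $Y\cong({S'}^{[n_1']}\times{S'}^{[n_2']})\times({S''}^{[n_1'']}\times{S''}^{[n_2'']})$, the fundamental class is the product of the two fundamental classes, so by Fubini $\int_Y\widetilde{\Psi}\big|_Y=\big(\int_{{S'}^{[n_1']}\times{S'}^{[n_2']}}\widetilde{\Psi}'\big)\big(\int_{{S''}^{[n_1'']}\times{S''}^{[n_2'']}}\widetilde{\Psi}''\big)$. Summing over all components of $S^{[n_1]}\times S^{[n_2]}$ and over $n_1,n_2$ weighted by $q^{n_1+n_2}$, the double sum factors (suppressing the arguments of $\widetilde{\Psi}',\widetilde{\Psi}''$):
\begin{align*}
\sum_{n_1,n_2}q^{n_1+n_2}\int_{S^{[n_1]}\times S^{[n_2]}}\widetilde{\Psi}
&=\sum_{n_1',n_1'',n_2',n_2''}q^{n_1'+n_2'+n_1''+n_2''}\Big(\int_{{S'}^{[n_1']}\times{S'}^{[n_2']}}\widetilde{\Psi}'\Big)\Big(\int_{{S''}^{[n_1'']}\times{S''}^{[n_2'']}}\widetilde{\Psi}''\Big)\\
&=\Big(\sum_{n_1',n_2'}q^{n_1'+n_2'}\int_{{S'}^{[n_1']}\times{S'}^{[n_2']}}\widetilde{\Psi}'\Big)\Big(\sum_{n_1'',n_2''}q^{n_1''+n_2''}\int_{{S''}^{[n_1'']}\times{S''}^{[n_2'']}}\widetilde{\Psi}''\Big).
\end{align*}
The normalization prefactor of $\sfZ_S^{\inst}$ splits too, by additivity over the disjoint union: $\chi(\O_S)=\chi(\O_{S'})+\chi(\O_{S''})$, $\chi(c_1-2a)=\chi(c_1'-2a')+\chi(c_1''-2a'')$, $\chi(2a-c_1)=\chi(2a'-c_1')+\chi(2a''-c_1'')$ (Euler characteristics are additive), and $(c_1-2a)L=(c_1'-2a')L'+(c_1''-2a'')L''$ (intersection numbers on a disjoint union are sums over the components). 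Multiplying the factorization of the series by that of the prefactor gives $\sfZ_S^{\inst}(L,a,c_1,s,y,q)=\sfZ_{S'}^{\inst}(L',a',c_1',s,y,q)\,\sfZ_{S''}^{\inst}(L'',a'',c_1'',s,y,q)$.

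I do not expect a genuine conceptual obstacle: the proof is a multiplicativity-and-bookkeeping exercise. The one thing requiring care is verifying, in the second paragraph, that after restricting to a component $Y$ the integrand $\widetilde{\Psi}$ becomes an \emph{honest} external product with no residual $s$-power or scalar left over — which is exactly what is secured by the additivity (over disjoint unions) of $n_i$, $\chi(\O_S)$, the Euler characteristics $\chi(\cdot)$, intersection numbers and the virtual dimension $\vd$, together with the vanishing of cross-$\Ext$'s between sheaves supported on $S'$ and on $S''$ and the multiplicativity of $\ch$, $\td$, $\mathsf{X}_y$ and Euler classes under direct sums.
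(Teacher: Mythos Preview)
Your proof is correct and follows essentially the same approach as the paper: decompose $S^{[n_1]}\times S^{[n_2]}$ under the disjoint union $S=S'\sqcup S''$, check that every factor of the integrand $\widetilde{\Psi}$ (tautological bundles, $R\hom$'s, $P(\cdot)$, the power of $2s$) restricts to an external product on each component, integrate via Fubini, and verify that the normalization prefactor is additive in the exponents. The paper's proof is terser because it cites \cite[Prop.~3.3]{GK1} for the case $L=\O_S$ and only spells out the one new ingredient, namely that the insertion $e^{\mu(c_1(L))}$ also factors as $(p')^*e^{\mu(c_1(L'))}\cdot(p'')^*e^{\mu(c_1(L''))}$; your write-up simply unpacks what that reference contains. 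The paper also mentions the auxiliary device of rewriting $S^{[n_1]}\times S^{[n_2]}$ as a component of $(S\sqcup S)^{[n]}$, but this is a packaging choice (useful for the universality Lemma~\ref{A}) rather than a genuinely different idea for the multiplicativity itself.
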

\begin{proof}
The case $L = \O_S$ was established in \cite[Prop.~3.3]{GK1}. The only new feature of the present case is the following. 

Define $S_2 = S \sqcup S$ . As shown in \cite[Prop.~3.3]{GK1}, the integrals over $S^{[n_1]} \times S^{[n_2]}$ occuring in the coefficients of $\sfZ_S^{\inst}(L,a,c_1,s,y,q)$ can be written as integrals on $S_2^{[n]}$ by using the decomposition
$$
S_2^{[n]} = \bigsqcup_{n_1+n_2=n} S^{[n_1]} \times S^{[n_2]}. 
$$
Since $S = S' \sqcup S^{\prime \prime}$, we have a further decomposition
$$
S^{[n_1]} \times S^{[n_2]} = \bigsqcup_{l_1+l_2=n_1, m_1+m_2=n_2} S^{ \prime [l_1]} \times S^{ \prime \prime [l_2]} \times S^{ \prime [m_1]} \times S^{ \prime \prime [m_2]}.
$$
Then the insertion $e^{\mu(c_1(L))}$ restricted to $S^{ \prime [l_1]} \times S^{ \prime \prime [l_2]} \times S^{ \prime [m_1]} \times S^{ \prime \prime [m_2]}$ equals
\begin{equation*}
p^{\prime *}  e^{\mu(c_1(L'))} p^{\prime \prime *} e^{\mu(c_1(L^{\prime \prime}))},
\end{equation*}
where $p', p''$ are the projections in the diagram
\begin{displaymath}
\xymatrix
{
& S^{ \prime [l_1]} \times S^{ \prime \prime [l_2]} \times S^{ \prime [m_1]} \times S^{ \prime \prime [m_2]} \ar[dl]^{p'} \ar[dr]_{p''} & \\
S^{ \prime [l_1]} \times S^{ \prime [m_1]} & & S^{ \prime \prime [l_2]} \times S^{ \prime \prime [m_2]}
}
\end{displaymath}
and $S^{ \prime [l_1]} \times S^{ \prime [m_1]}$ is seen as a connected component of $S_2^{\prime [l_1+m_1]}$ and $S^{ \prime \prime [l_2]} \times S^{ \prime \prime [m_2]}$ as a connected component of $S_2^{\prime \prime [l_2+m_2]}$. The rest of the proof proceeds exactly as in \cite[Prop.~3.3]{GK1}.
\end{proof}

\begin{lemma} \label{A}
There exist universal functions 
$$
A_1(y,q),\ldots , A_{11}(y,q) \in 1+q \, \Q[y^{\pm \frac{1}{2}}][[q]]
$$ 
such that for any smooth projective surface $S$ and $L,a,c_1 \in A^1(S)$ we have
$$
\sfZ_S^{\inst}(L,a,c_1,s,y,q) = A_1^{L^2} A_2^{L a} A_3^{a^2} A_4^{a c_1} A_5^{c_1^2} A_6^{L c_1} A_7^{L K_S} A_8^{a K_S} A_9^{c_1 K_S} A_{10}^{K_S^2} A_{11}^{\chi(\O_S)}.
$$
\end{lemma}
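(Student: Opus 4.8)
The plan is to deduce the multiplicative/universal structure from the cobordism-type argument that underlies \cite[Sect.~3]{GK1}, promoted to track the extra line bundle $L$. First I would observe that, by construction, $\sfZ_S^{\inst}(L,a,c_1,s,y,q)$ is built out of integrals over $S^{[n_1]}\times S^{[n_2]}$ of a universal polynomial (with coefficients in $\Q[y^{\pm\frac12}](\!(s)\!)$) in the Chern numbers of the tautological objects $\I_1(a)$, $\I_2(c_1-a)$, the tautological bundles $\O(a)^{[n_i]}$, $\O(c_1-a)^{[n_i]}$, and $\mu(c_1(L))$, together with the normalization prefactor which depends only on $\chi(\O_S)$, $\chi(c_1-2a)$, $\chi(2a-c_1)$ and $(c_1-2a)L$; by Riemann--Roch on $S$ these $\chi$'s are themselves polynomials in $L^2, La, a^2, ac_1, c_1^2, Lc_1, LK_S, aK_S, c_1K_S, K_S^2, \chi(\O_S)$. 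So it suffices to show the \emph{integrals} assemble into an expression of the stated exponential shape.

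The key step is the standard two-part argument. First, Lemma \ref{mult} gives multiplicativity under disjoint union: $\sfZ_{S'\sqcup S''}^{\inst}=\sfZ_{S'}^{\inst}\cdot\sfZ_{S''}^{\inst}$, and all the exponents $L^2,La,a^2,ac_1,c_1^2,Lc_1,LK_S,aK_S,c_1K_S,K_S^2,\chi(\O_S)$ are additive under disjoint union of the data $(S,L,a,c_1)$. Second, one shows that for fixed total number of points the integrand is a \emph{tautological} integral: by Ellingsrud--G\"ottsche--Lehn's theorem, an integral over $S^{[n_1]}\times S^{[n_2]}$ of a polynomial in Chern classes of tautological sheaves associated to line bundles $D_1,\dots,D_k$ on $S$ is a universal polynomial in the intersection numbers $D_iD_j$, $D_iK_S$, $K_S^2$, $c_2(S)$ (equivalently $\chi(\O_S)$, using $\chi(\O_S)=\frac1{12}(K_S^2+c_2(S))$). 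Here the relevant divisors are $a$, $c_1-a$, $L$ (the class $\mu(c_1(L))$ is itself a tautological class attached to $L$ on each factor, via \eqref{muinsert}), so every coefficient of $\sfZ_S^{\inst}$ is a universal polynomial in $L^2,La,a^2,\dots,\chi(\O_S)$. Combining: $\log \sfZ_S^{\inst}$ is additive under disjoint union and each of its coefficients is a universal polynomial in those eleven quantities; a polynomial that is additive under disjoint union of $(S,L,a,c_1)$ --- hence additive in each of the eleven arguments simultaneously --- must be \emph{linear} in them, i.e. an $\R$-linear combination $\alpha_1 L^2+\cdots+\alpha_{11}\chi(\O_S)$ with coefficients in $\Q[y^{\pm\frac12}](\!(s)\!)$ depending only on $q$. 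Exponentiating yields the claimed product $A_1^{L^2}\cdots A_{11}^{\chi(\O_S)}$, and the normalization $\sfZ_S^{\inst}\in 1+q\,\Q[y^{\pm\frac12}](\!(s)\!)[[q]]$ forces each $A_i\in 1+q\,\Q[y^{\pm\frac12}][[q]]$; a separate check (as in \cite{GK1}) that no negative powers of $s$ survive after the full expression is assembled gives $A_i\in 1+q\,\Q[y^{\pm\frac12}][[q]]$, although one may equally just carry the $s$-dependence and note it cancels exactly as in the $L=\O_S$ case.

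The main obstacle is verifying that the new ingredient --- the insertion $e^{\mu(c_1(L))}$ --- does not disrupt either the multiplicativity or the tautological-integral structure. Lemma \ref{mult} already handles the first point: the displayed diagram shows $e^{\mu(c_1(L))}$ restricts as a product $p'^*e^{\mu(c_1(L'))}\cdot p''^*e^{\mu(c_1(L''))}$ over the components of the Hilbert scheme of the disjoint union, which is exactly what is needed to run the argument of \cite[Prop.~3.3]{GK1} verbatim. For the second point, one must check that $\mu(c_1(L))$, regarded as a class on $S^{[n_1]}\times S^{[n_2]}$, lies in the tautological subring generated by Chern classes of $D^{[n_i]}$-type bundles and the standard classes pulled back from $S$; this follows because $\mu(c_1(L))$ is a slant product of $\pi_S^*c_1(L)$ against a polynomial in Chern classes of $\I_1,\I_2$, and such slant products are expressible through the tautological sheaves $\O(a)^{[n_i]}$ and cohomology of $S$ by the same GRR/K\"unneth manipulation used in Lemma \ref{redtoDon}. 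Once these two compatibilities are in place, the rest is the well-trodden EGL + disjoint-union additivity machinery, and the proof is complete.
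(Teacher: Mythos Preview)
Your argument is correct and follows essentially the same route as the paper: universality of the coefficients via the Ellingsrud--G\"ottsche--Lehn / \cite[Lem.~5.5]{GNY1} result, combined with multiplicativity under disjoint union (Lemma~\ref{mult}) to force $\log\sfZ_S^{\inst}$ to be linear in the eleven Chern numbers. The only difference is cosmetic: where you assert that an additive universal polynomial must be linear, the paper makes this step explicit by exhibiting eleven quadruples $(S^{(i)},L^{(i)},a^{(i)},c_1^{(i)})$ with $\Q$-independent Chern-number vectors and then invoking Zariski density of their non-negative integer span---which is precisely the justification your linearity claim needs.
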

\begin{proof}
By \cite{EGL}, tautological integrals on Hilbert schemes of points on surfaces are universal. We are dealing with integrals over products of Hilbert schemes, which were handled in \cite[Lem.~5.5]{GNY1}. By \cite[Lem.~5.5]{GNY1} (see also \cite[Prop.~3.3]{GK1}), there exists a universal power series
$$
G \in \Q[x_1, \cdots, x_{11}][[q]]
$$
such that for any smooth projective surface $S$ and $L,a,c_1 \in A^1(S)$ we have
\begin{equation} \label{G}
\sfZ_S^{\inst}(L,a,c_1,s,y,q) = e^{G(L^2,La,a^2,ac_1, c_1^2,Lc_1,LK_S,aK_S,c_1K_S,K_S^2,\chi(\O_S))}.
\end{equation}
Here we use the fact that $\sfZ_S^{\inst}(L,a,c_1,s,y,q)$ starts with 1.

We claim that equation \eqref{G} and Lemma \ref{mult} together imply the result. This can be seen as follows (see also \cite[Lem.~5.5]{GNY1}). Choose 11 quadruples $(S^{(i)},L^{(i)},a^{(i)},c_1^{(i)})$ such that the corresponding vectors of Chern numbers
$$
w_i := ((L^{(i)})^2,\ldots,\chi(\O_{S^{(i)}})) \in \Q^{11}
$$
form a $\Q$-basis. Now consider any $(S,L,a,c_1)$. Then we can decompose its vector of Chern numbers $w = (L^2, \ldots, \chi(\O_S))$ as 
$
w = \sum_i n_i w_i,
$
for some $n_i \in \Q$. If all $n_i \in \Z_{\geq 0}$, then Lemma \ref{mult} implies that 
\begin{equation} \label{intermed}
\sfZ_S^{\inst}(L,a,c_1,s,y,q) = \prod_{i=1}^{11} \Big( e^{G(w_i)} \Big)^{n_i}.
\end{equation}
Let $W$ be the matrix with column vectors $w_1, \ldots, w_{11}$ and $M = (m_{ij})$ its inverse. Defining $A_j := \exp(\sum_i m_{ij} G(w_i))$, 
equation \eqref{intermed} implies
$$
\sfZ_S^{\inst}(L,a,c_1,s,y,q) = A_1^{L^2} \cdots A_{11}^{\chi(\O_S)}.
$$
Since the set of vectors $w$ with all $n_i \in \Z_{\geq 0}$ is Zariski dense in $\Q^{11}$, the proposition holds for \emph{any} $(S,L,a,c_1)$.
\end{proof}

Theorem \ref{mocthm} and Lemma \ref{A} at once imply the following result.
\begin{proposition} \label{mainprop}
Let $S$ be a smooth projective surface with $b_1(S) = 0$, $p_g(S) >0$, and $L \in \Pic(S)$. Let $H, c_1,c_2$ be chosen such that there are no rank 2 strictly Gieseker $H$-semistable sheaves on $S$ with Chern classes $c_1,c_2$. Assume the following hold:
\begin{enumerate}
\item[(i)] $\chi(\ch) > 0$, where $\chi(\ch) := \int_S \ch \cdot \td(S)$ and $\ch = (2,c_1,\frac{1}{2}c_1^2 - c_2)$.
\item[(ii)] $p_{\ch} > p_{K_S}$, where $p_{\ch} = \chi(e^{mH} \cdot \ch)/2$ and $p_{K_S} = \chi(e^{mH} \cdot e^{K_S})$ are the reduced Hilbert polynomials of $\ch$ and $K_S$.
\item[(iii)] For all SW basic classes $a$ with $a H \leq (c_1 -a) H$ the inequality is strict. 
\end{enumerate}
Then  $y^{-\frac{\vd}{2}}\chi_{-y}^{\vir}(M_{S}^{H}(2,c_1,c_2),\mu(L))$ is the coefficient of $x^{\vd} s^0$ of 
\begin{align*} 
&-2 \sum_{{\scriptsize{\begin{array}{c} a \in H^2(S,\Z) \\ a H < (c_1-a) H \end{array}}}}  \SW(a) \, A_1(y,2x^4)^{L^2}  \Bigg( e^{2s} A_2(y,2x^4) \Bigg)^{L a} \\
&\cdot \Bigg(2^{-1} \Bigg( \frac{2s}{f(s,y)} \Bigg)^2 \Bigg( \frac{-2s}{f(-s,y)} \Bigg)^2 x^{-4} A_3(y,2x^4) \Bigg)^{a^2} \\
&\cdot \Bigg( 2 \Bigg(\frac{2s}{f(s,y)} \Bigg)^{-2} \Bigg( \frac{-2s}{f(-s,y)} \Bigg)^{-2} x^4 A_4(y,2x^4) \Bigg)^{a c_1} \\
&\cdot \Bigg(2^{-\frac{1}{2}} \Bigg( \frac{2s}{f(s,y)} \Bigg)^{\frac{1}{2}} \Bigg( \frac{-2s}{f(-s,y)} \Bigg)^{\frac{1}{2}} x^{-1} A_5(y,2x^4) \Bigg)^{c_1^2} \\
&\cdot \Bigg( e^{-s} A_6(y,2x^4) \Bigg)^{L c_1} A_7(y,2x^4)^{L K_S} \\
&\cdot \Bigg(  \Bigg( \frac{2s}{f(s,y)} \Bigg) \Bigg( \frac{-2s}{f(-s,y)} \Bigg)^{-1}  A_8(y,2x^4) \Bigg)^{a K_S} \\
&\cdot \Bigg( 2^{\frac{1}{2}} \Bigg( \frac{2s}{f(s,y)} \Bigg)^{-\frac{1}{2}} \Bigg( \frac{-2s}{f(-s,y)} \Bigg)^{\frac{1}{2}}  A_9(y,2x^4) \Bigg)^{c_1 K_S} \\
&\cdot A_{10}(y,2x^4)^{K_S^2} \Bigg( \frac{s}{2} \Bigg( \frac{2s}{f(s,y)} \Bigg) \Bigg( \frac{-2s}{f(-s,y)} \Bigg) x^{-3} A_{11}(y,2x^4) \Bigg)^{\chi(\O_S)}.
\end{align*}
\end{proposition}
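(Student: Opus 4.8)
The plan is to chain together three ingredients already in place --- Lemma \ref{redtoDon}, Mochizuki's Theorem \ref{mocthm}, and the universal factorisation of $\sfZ_S^{\inst}$ from Lemma \ref{A} --- after which the assertion reduces to a bookkeeping calculation. First I would apply Lemma \ref{redtoDon} to the twisted virtual $\chi_y$-genus, so that $y^{-\vd/2}\chi^{\vir}_{-y}(M,\mu(L))=\int_{[M]^{\vir}}P(\E)$ for the polynomial $P(\E)$ of \eqref{choiceP}. Since, as recorded just after \eqref{choiceP}, the complex obtained from $-R\hom_\pi(\E,\E)_0$ by the substitution $\E\mapsto\I_1(a_1)\otimes\s^{-1}\oplus\I_2(a_2)\otimes\s$ has the same rank $\vd$ as $T^{\vir}_M$, and since hypotheses (i)--(iii) of the Proposition are precisely the hypotheses of Theorem \ref{mocthm}, Mochizuki's formula \eqref{mocform} applies and yields
$$
y^{-\frac{\vd}{2}}\chi^{\vir}_{-y}(M,\mu(L)) \;=\; -2^{\,1-\chi(\ch)}\sum_{\substack{a\in H^2(S,\Z)\\ aH<(c_1-a)H}}\SW(a)\,\cA(L,a,c_1-a,c_2),
$$
a finite sum, since $\SW(a)\neq0$ only for the finitely many Seiberg--Witten basic classes.

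Next I would rewrite each $\cA(L,a,c_1-a,c_2)$ as a coefficient of the series underlying $\sfZ_S^{\inst}$. Unwinding \eqref{cA}, using $\Psi=\Coeff_{s^0}\widetilde\Psi$, and noting that the summation range $n_1+n_2=c_2-a(c_1-a)$ in \eqref{cA} is picked out by $\Coeff_{q^{\,N}}$ with $N:=c_2-a(c_1-a)$, one gets
$$
\cA(L,a,c_1-a,c_2) \;=\; \Coeff_{s^0}\,\Coeff_{q^{\,N}}\!\Big(\textstyle\sum_{n_1,n_2}q^{n_1+n_2}\!\int_{S^{[n_1]}\times S^{[n_2]}}\widetilde\Psi(L,a,c_1-a,n_1,n_2,s)\Big).
$$
Solving the definition of $\sfZ_S^{\inst}$ for the double sum and substituting the factorisation of Lemma \ref{A} expresses this double sum as
$$
(2s)^{\chi(\O_S)}\Big(\tfrac{2s}{f(s,y)}\Big)^{\chi(c_1-2a)}\Big(\tfrac{-2s}{f(-s,y)}\Big)^{\chi(2a-c_1)}e^{-(c_1-2a)Ls}\;\prod_{j=1}^{11}A_j(y,q)^{e_j},
$$
where $(e_1,\dots,e_{11})=(L^2,La,a^2,ac_1,c_1^2,Lc_1,LK_S,aK_S,c_1K_S,K_S^2,\chi(\O_S))$ as in Lemma \ref{A}. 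Setting $q=2x^4$ and feeding this back into the sum over $a$ produces an expression of exactly the shape asserted; what remains is to reorganise the prefactors so that they distribute onto the eleven base factors $A_j(y,2x^4)$.

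This last step is the only real work, and it is organisational rather than conceptual. I would use Riemann--Roch on $S$ in the form $\chi(\O_S(D))=\chi(\O_S)+\tfrac12(D^2-DK_S)$ to turn $\chi(c_1-2a)$, $\chi(2a-c_1)$ and $\chi(\ch)=2\chi(\O_S)-\tfrac12c_1K_S+\tfrac12c_1^2-c_2$ (with $c_2=\tfrac14(\vd+c_1^2+3\chi(\O_S))$) into $\Q$-linear combinations of $\vd$ and the eleven Chern numbers; likewise $N=\tfrac14\big(\vd+(2a-c_1)^2+3\chi(\O_S)\big)$, so that $\Coeff_{q^{\,N}}$ becomes $\Coeff_{x^{\vd}}$ after multiplication by $x^{-(2a-c_1)^2-3\chi(\O_S)}$, while the substitution $q=2x^4$ contributes the compensating constant $2^{-N}$. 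Expanding $x^{-(2a-c_1)^2}=(x^{-4})^{a^2}(x^4)^{ac_1}(x^{-1})^{c_1^2}$, together with the analogous splitting of the powers of $2$, I would distribute every power of $x$, of $2$, of $s$, of $e^{\pm s}$ and of $\tfrac{\pm2s}{f(\pm s,y)}$ onto the Chern number it multiplies, and then match term by term with the displayed formula, checking the single overall sign and the fact that the half- and quarter-integer powers of $2$ recombine into integer powers via integrality of $\chi(\O_S(D))$. Two points require care: (a) Mochizuki's $\cA(L,a_1,a_2,c_2)$ treats $a_1,a_2$ independently, whereas $\sfZ_S^{\inst}(L,a,c_1,\dots)$ records the \emph{total} class $c_1=a_1+a_2$ in its third slot, so one must substitute $a_2=c_1-a$ and $a_1a_2=ac_1-a^2$ consistently; and (b) the factor $y^{-\vd/2}$ inside $P$ in \eqref{choiceP}, together with the $y^{-1/2}$ built into $f(s,y)$, must be tracked via the rank identity so that precisely the factors $f(\pm s,y)$ of the statement survive, with no stray power of $y$. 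As in the proof of Lemma \ref{A} it suffices to verify this matching on a $\Q$-spanning family of quadruples $(S,L,a,c_1)$; the identity of generating series then follows, and applying $\Coeff_{x^{\vd}s^0}$ gives the Proposition. The main obstacle is thus simply the volume of the bookkeeping: keeping the eleven exponents, the prefactors, and the two coefficient extractions consistent simultaneously.
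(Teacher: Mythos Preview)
Your proposal is correct and follows exactly the route the paper takes: the paper's entire proof is the single sentence ``Theorem~\ref{mocthm} and Lemma~\ref{A} at once imply the following result,'' and what you have written is a careful unpacking of that sentence --- applying Lemma~\ref{redtoDon} to get $P(\E)$, feeding it through Mochizuki's formula, inverting the normalisation in the definition of $\sfZ_S^{\inst}$, substituting the factorisation of Lemma~\ref{A}, and then doing the Riemann--Roch bookkeeping with $q=2x^4$ to match the eleven displayed factors. One small remark: your closing appeal to a $\Q$-spanning family of quadruples is unnecessary here, since the matching of prefactors is a formal identity in the Chern numbers and holds symbolically once the algebra is done; the spanning-family device is used in Lemma~\ref{A} to \emph{construct} the $A_j$, not to verify identities of this kind.
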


\begin{remark} 
By Remark \ref{assumpmocthm}, conjecturally, assumptions (ii) and (iii) in the previous proposition, as well as the inequality $aH < (c_1-a)H$ in the sum, can be dropped. 
\end{remark}

\subsection{Reduction to toric surfaces} \label{toricA}

We now present 11 choices of $(S,L,a,c_1)$ for which the vectors of Chern numbers $(L^2,\ldots,\chi(\O_S))$ are $\Q$-independent:
\begin{align*}
(S,L,a,c_1) = &(\PP^2,\O,\O,\O), \\ 
& (\PP^1 \times \PP^1,\O,\O,\O), \\
&(\PP^2,\O,\O(1),\O(2)), \\ 
&(\PP^2,\O,\O,\O(1)), \\
& (\PP^2,\O,\O(1),\O(3)), \\
& (\PP^1 \times \PP^1,\O,\O(0,1),\O(0,2)), \\
& (\PP^1 \times \PP^1,\O,\O,\O(0,1)), \\
&(\PP^2,\O(1),\O,\O), \\ 
&(\PP^1\times \PP^1,\O(0,1),\O,\O), \\
&(\PP^2,\O(1),\O(1),\O(2)),\\
&(\PP^2,\O(1),\O,\O(1)).
\end{align*}
Each of these surfaces $S$ is toric and hence has an action of $T=\C^* \times \C^*$. Choose $T$-equivariant structures on the line bundles corresponding to $L,a,c_1$. Then we can calculate $\sfZ_S^{\inst}(L,a,c_1,s,y,q)$ by Atiyah-Bott localization. More precisely, consider one of the intersection numbers
$$
\int_{S^{[n_1]} \times S^{[n_2]}} \widetilde{\Psi}(L,a,c_1-a,n_1,n_2,s)
$$
appearing in the definition of $\sfZ_S^{\inst}(L,a,c_1,s,y,q)$. The action of $T$ lifts to $S^{[n_1]} \times S^{[n_2]}$ and its fixed locus is indexed by pairs
$$
\left(\{\lambda^{(\sigma)}\}_{\sigma=1}^{e(S)},\{\mu^{(\sigma)}\}_{\sigma=1}^{e(S)} \right),
$$ 
where each $\lambda^{(\sigma)} = (\lambda^{(\sigma)}_1 \geq \lambda^{(\sigma)}_2 \geq  \ldots)$ and $\mu^{(\sigma)} = (\mu^{(\sigma)}_1 \geq \mu^{(\sigma)}_2 \geq  \ldots)$ are partitions such that
$$
\sum_\sigma |\lambda^{(\sigma)}| = \sum_{\sigma,i} \lambda^{(\sigma)}_i = n_1, \qquad \sum_\sigma |\mu^{(\sigma)}| = \sum_{\sigma, i} \mu^{(\sigma)}_i = n_2.
$$
The Euler number $e(S)$ equals the number of torus fixed points $p_\sigma$ of $S$ and each partition $\lambda^{(\sigma)}$, $\mu^{(\sigma)}$ corresponds (in the usual way) to a monomial ideal on the maximal $T$-invariant affine open subset $\C^2 \cong U_\sigma \subset S$ containing $p_\sigma$. E.g.~see \cite{GK1, GK2} for more details. 

For any pair $(\{\lambda^{(\sigma)}\}_\sigma,\{\mu^{(\sigma)}\}_\sigma)$ corresponding to 0-dimensional $T$-fixed subschemes $(Z,W) \in S^{[n_1]} \times S^{[n_2]}$, we are interested in the restriction
\begin{equation} \label{psirestr}
\widetilde{\Psi}(L,a,c_1-a,n_1,n_2,s) \Big|_{(Z,W)}.
\end{equation}
Let $\widetilde{T} := T \times \C^*$, where $\C^*$ is the torus acting trivially on $S^{[n_1]} \times S^{[n_2]}$ (as in Mochizuki's formula). Denote by $\mathfrak{t}_1, \mathfrak{t}_2, \mathfrak{s}$ positive primitive generators of the character group of each factor of $\widetilde{T}$. Then the $\widetilde{T}$-equivariant $K$-group of a point is given by the following ring of Laurent polynomials
$$
K^0_{\widetilde{T}}(\pt) \cong \Z[\mathfrak{t}_1^{\pm}, \mathfrak{t}_2^{\pm}, \mathfrak{s}^{\pm}].
$$ 
In order to calculate \eqref{psirestr} in terms of $\epsilon_1 := c_1^{\widetilde{T}}(\mathfrak{t}_1)$, $\epsilon_2 := c_1^{\widetilde{T}}(\mathfrak{t}_2)$, and $s := c_1^{\widetilde{T}}(\s)$, we must determine the classes of the following complexes in $K^0_{\widetilde{T}}(\pt)$
\begin{align*}
&H^0(\O_Z(a)), \quad H^0(\O_W(c_1-a)), \\
&R\Hom_S(\O_Z,\O_Z), \quad R\Hom_S(\O_W,\O_W),  \\
&R\Hom_S(\O_Z,\O_W(c_1-2a)\otimes \s^2), \quad R\Hom_S(\O_W(c_1-2a) \otimes \s^2,\O_Z), 
\end{align*}
where $I_Z, I_W \subset \O_S$ are the ideal sheaves of $Z,W$.
The expressions in the first line follow at once from the $\widetilde{T}$-representations of $Z, W$ in terms of the partitions $\lambda^{(\sigma)}, \mu^{(\sigma)}$. The expressions in lines two and three can be calculated by using a $T$-equivariant resolution of $I_Z, I_W$. For explicit formulae, see \cite[Prop.~4.1]{GK1}. Finally, $\mu(L)$ leads to the insertion
\begin{align*}
\pi_* \left( c^{\widetilde{T}}_1(L) \cdot ( \ch^{\widetilde{T}}_2(\O_Z) + \ch^{\widetilde{T}}_2(\O_W)) \cap [S]  \right) = \sum_{\sigma=1}^{e(S)} a_\sigma \cdot \left( |\lambda^{(\sigma)}| + |\mu^{(\sigma)}| \right),
\end{align*}
where $\pi_* : K^0_{\widetilde{T}}(S) \rightarrow K^0_{\widetilde{T}}(\pt)$ denotes equivariant push-forward and $a_\sigma$ is the character corresponding to $L|_{U_\sigma}$. 

The calculation of  $\sfZ_S^{\inst}$ for each of the 11 cases above is now a purely combinatorial problem, which we implemented in Pari/GP. We determined the universal series $A_1,\ldots , A_{11}$ of Proposition \ref{A} to the following orders:
\begin{itemize}
\item For $A_{1}(1,q),\ldots,A_{11}(1,q)$, we computed the coefficients of $s^{l-3n} q^n$ for all $n \leq 10$, $l\le 49$. (Recall: $A_i(1,q), A_i(y,q)$ are \emph{Laurent} series in $s$.)
\item For $A_{1}(y,q),\ldots,A_{11}(y,q)$, we computed the coefficients of $s^{l-5n} y^m q^n$ for all $n \leq 6$, $m \leq 9$, $l\le 30$.
\end{itemize}

\subsection{Verifications}

We verified Conjecture \ref{conj1} in the following cases.\footnote{In this list, $\pi : S \rightarrow S'$ always denotes the blow-up in a point and the exceptional divisor is written as $E$ (or $E_1, E_2$ in the case of a blow-up in two dinstinct points).} In each case, we fix $S,c_1,c_2$ as indicated and we choose $H$ such that the assumptions of Proposition \ref{mainprop} are satisfied. We use the explicit expansions of $A_{1}(1,q),\ldots,A_{11}(1,q)$ determined in the previous section by localization calculations on $\PP^2$ and $\PP^1 \times \PP^1$ using the computer program Pari/GP.
\begin{enumerate}
\item $S$ is a K3 surface, $c_1$ such that $c_1^2=0,2,\ldots,20$, and $\vd < 14$.
\item $S$ is the blow-up of a K3 surface in a point, $c_1=\pi^*C+\epsilon E$ such that $C^2=-4,-2,\ldots,10$, $\epsilon=-2,-1,\ldots,2$, and $\vd < 15$.
\item $S$ is the blow-up of a K3 surface in two distinct points, $c_1=\pi^*C+ e_1 E_1+ e_2 E_2$ such that $C^2=-2,0,\ldots,6$, $e_1,e_2=0,1$, and $\vd < 13$.
\item $S \rightarrow \PP^1$ is an elliptic surface of type $E(N)$,\footnote{I.e.~an elliptic surface $S \rightarrow \PP^1$ with section, $12N$ rational 1-nodal fibres, and no other singular fibres.} $N=\chi(\O_S)=3,4, \ldots, 7$,
$c_1=mB+nF$ where $B$ is the class of a section, $F$ is the class of a fibre, 
$m=-1,0,1,2$, $n=-2,-1,\ldots,5$, and $\vd < 12$.
\item $S$ is the blow-up of an elliptic surface of type $E(3)$ in a point, $c_1=\pi^*C+\epsilon E$ such that $CK_{S}=-1,0,\ldots,4$, $C^2=-4,-3,\ldots,10$, $\epsilon=0,1$, and $\vd < 12$.
\item $S$ is a minimal general type surface with $b_1(S)=0$, $\chi(\O_S)=2$, $K_S^2=1$ \cite{Kyn}, $c_1$ such that $c_1\cdot K_S=0,1$, $c_1^2=-2,-1,\ldots,11$, and $\vd < 12$.
\item $S$ is a double cover of $\PP^2$ branched along a smooth octic, 
$c_1$ such that $c_1 \cdot K_S=0,1,\ldots, 10$, $c_1^2=0,1,\ldots, 30$, and $\vd < 12$. 
\item $S$ is the blow-up of a surface $S'$ as in (7) in a point, $c_1=\pi^* C+\epsilon E$ such that $CK_{S}=-2,-1,\ldots,2$, $C^2=-2,-1,\ldots,8$, $\epsilon=0,1$, and $\vd < 11$.
\item $S$ is a very general smooth quintic in $\PP^3$ (then 
$\Pic(S) = \Z[H]$), $c_1 =2H$ and $\vd < 8$, or $c_1=3H$ and $\vd < 7$. 
\end{enumerate}

Assuming the strong form of Mochizuki's formula holds (Remark \ref{assumpmocthm}), we also verified Conjecture \ref{conj1} in the following cases:
\begin{enumerate}
\item[(10)] $S$ is a smooth quintic surface in $\PP^3$, $c_1$ such that $c_1 \cdot K_S=0,1, \ldots,  25$, $c_1^2=-4,-3, \ldots, 20$, and $\vd < 11$. 
\item[(11)] $S$ is the blow-up of a quintic in $\PP^3$ in a point, $c_1=\pi^*C+\epsilon E$ such that $C K_S=-5,-4,\ldots,5$, $C^2=-4,-3,\ldots,8$, $\epsilon=0,1$, and $\vd < 10$.
\end{enumerate}

Applying the same method and using our explicit expansions of $A_{1}(y,q)$, $\ldots$, $A_{11}(y,q)$ from the previous section, we verified Conjecture \ref{conj2} in the following cases:
\begin{enumerate} 
\item $S$ is a K3 surface, $c_1$ such that $c_1^2=0,2,\ldots,14$, and $\vd < 11$.
\item $S$ is the blow-up of a K3 surface in a point, $c_1=\pi^*C+\epsilon E$ such that $C^2=-4,-2,\ldots,14$, $\epsilon=-2,-1,\ldots,2$, and $\vd <10$.
\item $S$ is the blow-up of a K3 surface in two distinct points, $c_1=\pi^*C+e_1 E_1+e_2 E_2$ such that $C^2=-2,0,\ldots,6$, $e_1,e_2=0,1$, and $\vd < 10$.
\item $S$ is an elliptic surface of type $E(N)$ with $N=3,4,5$, $c_1=mB+nF$ with $m=-1,0,1,2$, $n=-2,-1,\ldots,10$, and $\vd <9$.
\item $S$ is the blow-up of an elliptic surface of type $E(3)$ in a point, $c_1=\pi^* C+\epsilon E$ such that $CK_{S}=-1,0,\ldots,4$, $C^2=-16,-15,\ldots,0$, $\epsilon=0,1$, and
$\vd <9$.
\item $S$ is the double cover of $\PP^2$ branched along a smooth octic, $c_1$ such that $c_1 \cdot K_S=-2,-1,\ldots, 2$, $c_1^2=-16, -15,\ldots, -6$, and $\vd < 9$. 
\item $S$ is the blow-up of $S'$ as in (6) in a point, $c_1=\pi^* C+\epsilon E$ such that $CK_S=-2,-1,\ldots,2$, $C^2=-16, -15\ldots,8$, $\epsilon=0,1$, and $\vd < 7$.
\end{enumerate}

Assuming the strong form of Mochizuki's formula holds (Remark \ref{assumpmocthm}), we also verified Conjecture \ref{conj2} in the following cases:
\begin{enumerate}
\item[(8)] $S$ is a smooth quintic in $\PP^3$, $c_1$ such that $c_1 \cdot K_S=2,3,\ldots,6$, $c_1^2=-16,-15,\ldots,-3$, and $\vd < 7$.
\item[(9)] $S$ is the blow-up of a smooth quintic in $\PP^3$ in a point, $c_1=\pi^*C+\epsilon E$ such that $CK_S=0$, $C^2=-23,-22,\ldots,-14$, $\epsilon=0,1$, and $\vd < 4$.
\end{enumerate}

\section{Monopole contribution and nested Hilbert schemes} \label{mon}

In this section, we study the contribution of the monopole branch to the invariants $\chi(N, \widehat{\O}^{\vir}_N \otimes \mu(L))$ defined in the introduction. We prove that it is determined by universal series $C_1, \ldots, C_6$ as stated in Theorem \ref{thm1}. Moreover, we express these universal functions in terms of integrals over products over Hilbert schemes of points on $S$. Much like in the previous section, these integrals are determined by their value on $\PP^2$ and $\PP^1 \times \PP^1$, where we calculate them, modulo $q^{15}$, by localization.

The methods of this section are a variation on Laarakker's work \cite{Laa2}, which in turn relies on Gholampour-Thomas's work \cite{GT1, GT2}. For $L =\O_S$ and $r=2$, Theorem \ref{thm1} was previously proved in \cite{Laa2} (in fact, for $L=\O_S$, he proved the analog of Theorem \ref{thm1} in any rank). Then $\chi(N, \widehat{\O}^{\vir}_N)$ are the rank 2 $K$-theoretic Vafa-Witten invariants defined by Thomas \cite{Tho} and determined by the universal series $C_1, C_2, C_5$. Closed formulae for these universal series were conjectured in \cite{GK3} (refining Vafa-Witten's original formula \cite[(5.38)]{VW}) and subsequently verified in \cite{Laa2} up to the following orders:
\begin{align} \label{Cpart1}
\begin{split}
C_1(y,q) &= \prod_{n=1}^{\infty} \frac{1}{(1-q^{2n})^{10}(1-q^{2n}y^2)(1-q^{2n}y^{-2})} \mod q^{15} \\
C_2(y,q) &= (y^{\frac{1}{2}} + y^{-\frac{1}{2}}) q^{\frac{1}{4}} \frac{\overline \eta(q)^2}{\theta_2(q,y)} \mod q^{15} \\ 
C_5(y,q) &= \frac{1}{(y^{\frac{1}{2}} + y^{-\frac{1}{2}}) q^{\frac{1}{4}}} \frac{\theta_2(q,y)}{\theta_3(q,y)} \mod q^{15},
\end{split}
\end{align}
where $\overline{\eta}(q), \theta_2(q,y), \theta_3(q,y)$ were introduced in \eqref{thetadef}. The universal power series $C_3, C_4, C_6$ are new. In accordance with Conjecture \ref{conj3}, we show 
\begin{align}
\begin{split}  \label{Cpart2}
C_3(y,q) = \prod_{n=1}^{\infty} &\left(\frac{(1-q^{2n})^2}{(1-q^{2n}y^2)(1-q^{2n}y^{-2})}\right)^{2n^2} \mod q^{15} \\
C_4(y,q) =  \prod_{n=1}^{\infty} &\left( \frac{1- q^{n}y^{-1}}{1- q^{n}y}\right)^n \left(\frac{1-q^{2n}y^{-2}}{1-q^{2n}y^2}\right)^{n}  \left( \frac{1+q^{2n}y^{-1}}{1+q^{2n}y}\right)^{4n}\mod q^{15} \\
C_6(y,q) =  \prod_{n=1}^{\infty} &\left( \frac{1-(-1)^n q^{n}y^{-1}}{1-(-1)^n q^{n}y}\right)^{2n} \left( \frac{1-q^{4n}y^2}{1-q^{4n}y^{-2}}\right)^{4n} \mod q^{15}.
\end{split}
\end{align}

\subsection{Gholampour-Thomas's formula} \label{mon1} 

Let $S$ be a smooth projective surface satisfying $b_1(S) = 0$ and $p_g(S)>0$. Let $r=2$ and $c_1,c_2$ be chosen such that there are no rank 2 strictly Gieseker $H$-semistable Higgs sheaves on $S$ with Chern classes $c_1,c_2$. Let $N := N_{S}^{H}(2,c_1,c_2)$ and let $M^{\mon} \subset N^{\C^*}$ be the monopole branch discussed in the introduction. Gholampour-Thomas \cite{GT1} (see also \cite{GSY1}) prove that the components of $M^{\mon}$ are isomorphic to 
$$
S_{\beta}^{[n_0,n_1]} :=\{ (Z_0,Z_1,C) \, : \, I_{Z_0}(-C) \subset I_{Z_1} \} \subset S^{[n_0]} \times S^{[n_1]} \times |\beta|,
$$
for \emph{certain} (see Remark \ref{ignorestab} below) $n_0,n_1 \geq 0$ and algebraic $\beta \in H^2(S,\Z)$. In particular, such $n_0,n_1,\beta$ satisfy 
\begin{align}
\begin{split} \label{n0n1beta}
&c_1 - \beta +K_S \in 2H^2(S,\Z), \\
&c_2 =  n_0+n_1+\Big( \frac{c_1-\beta+K_S}{2} \Big) \Big( \frac{c_1+\beta-K_S}{2} \Big).
\end{split}
\end{align}
Whenever we have $n_0,n_1,\beta$ satisfying \eqref{n0n1beta}, it is convenient to define
\begin{align*}
L_0 := \frac{c_1-\beta+K_S}{2}, \quad L_1 := \frac{c_1+\beta-K_S}{2}.
\end{align*}
Consider the inclusion
$$
\iota :   S_{\beta}^{[n_0,n_1]} \subset S^{[n_0]} \times S^{[n_1]}  \times |\beta|,
$$
where $|\beta|$ denotes the linear system determined by $\O_S(\beta)$. The universal sheaf $\EE$ on $M^{\mon} \times S$ restricted to the component $S_{\beta}^{[n_0,n_1]} \times S$ is 
 \begin{equation} \label{Emono}
 \EE \cong \I_0 \otimes L_0 \oplus  \I_1 \otimes L_1(1) \otimes \t^{-1},
 \end{equation}
 where $\t$ is a positive primitive character of the trivial $\C^*$-action on $M^{\mon} \times S \subset N^{\C^*} \times S$. Moreover, $\I_0, \I_1$ are the universal ideal sheaves pulled back  from the factors of $S^{[n_0]} \times S^{[n_1]}  \times S \times |\beta|$ (and then along $\iota \times \id_S$ to $S_{\beta}^{[n_0,n_1]}$), $L_0, L_1$ are pulled back from $S$, and $\O(1)$ is pulled back from $|\beta|$. Consider $M^{\mon} \subset N^{\C^*}$ with its $\C^*$-localized perfect obstruction theory \cite{GP}.
 
\begin{theorem}[Gholampour-Thomas] \label{GT}
The class $\iota_* [S_{\beta}^{[n_0,n_1]}]^{\vir}$ is given by
$$
\SW(\beta)  \, e\big( R\Gamma(\beta) \otimes \O - R\hom_{\pi}(\I_0,\I_1(\beta) \big) \in H_{2n_0+2n_1}(S^{[n_0]} \times S^{[n_1]} \times |\beta|),
$$
where $\pi : S^{[n_0]} \times S^{[n_1]} \times S \rightarrow S^{[n_0]} \times S^{[n_1]}$ denotes projection, $e(\cdot) = c_{n_0+n_1}(\cdot)$,\footnote{By Carlsson-Okounkov vanishing, $c_{>n_0+n_1}(R\Gamma(\beta) \otimes \O - R\hom_{\pi}(\I_0,\I_1(\beta)) = 0$ \cite{GT1}.} and the LHS should be interpreted as the image under push-forward along the inclusion $S^{[n_0]} \times S^{[n_1]} \times \{\pt\} \hookrightarrow S^{[n_0]} \times S^{[n_1]} \times |\beta|$ for any point $\pt \in |\beta|$.  
\end{theorem}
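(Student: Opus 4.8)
The plan is to reconstruct Gholampour--Thomas's argument \cite{GT1,GT2}. Fix $n_0,n_1,\beta$ satisfying \eqref{n0n1beta}, set $L_0,L_1$ as above, and work over the smooth projective ambient variety $A:=S^{[n_0]}\times S^{[n_1]}\times|\beta|$, carrying the universal subschemes $\cZ_0,\cZ_1\subset A\times S$ with ideals $\I_0,\I_1$ and the universal divisor $\cC\subset A\times S$ in class $\beta$, the last cut out by the tautological section of $\O_S(\beta)\boxtimes\O_{|\beta|}(1)$. First I would make the $\C^*$-fixed perfect obstruction theory on the monopole component $S_\beta^{[n_0,n_1]}$ explicit: by Tanaka--Thomas it is the weight-zero part of the $\C^*$-localised symmetric obstruction theory of $N$, so feeding the universal sheaf \eqref{Emono}, $\EE\cong\I_0\otimes L_0\oplus\I_1\otimes L_1(1)\otimes\t^{-1}$, into $R\hom_\pi(\EE,\EE)_0[1]$ (twisted by $K_S$ and symmetrised as in \cite{TT1,Tho}) and extracting the $\t$-weight-zero summand presents the virtual tangent complex of $S_\beta^{[n_0,n_1]}$, after bookkeeping the twists by $L_0,L_1,\O_S(\cC),\O_{|\beta|}(1)$ and the characters $\t^{\pm1}$, in terms of $R\hom_\pi(\I_0,\I_1(\beta))$, $R\pi_*\O_S(\beta)$ and the (smooth) tangent bundles of $S^{[n_0]}$, $S^{[n_1]}$, $|\beta|$.

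Next I would realise $S_\beta^{[n_0,n_1]}$, with exactly this obstruction theory, as the reduction of a quasi-smooth locus inside $A$. The incidence condition $I_{Z_0}(-C)\subseteq I_{Z_1}$ is the vanishing of a canonical morphism built from $\I_0$, the inclusion $\O_S(-\cC)\into\O$ and the quotient $\O\twoheadrightarrow\O_{\cZ_1}$, i.e.\ the vanishing of a natural section $\sigma$ of a perfect complex $\mathsf{E}$ on $A$ with $S_\beta^{[n_0,n_1]}=Z(\sigma)$. Two things then need checking: (i) the obstruction theory on $Z(\sigma)$ coming from this degeneracy-locus description --- after the reduction referred to in the introduction --- agrees with the one from the previous paragraph, which one carries out by running the short exact sequences $0\to\I_i\to\O\to\O_{\cZ_i}\to0$ and $0\to\O_S(-\cC)\to\O\to\O_{\cC}\to0$ through $R\hom_\pi$ and matching the resulting exact triangles; and (ii), as a by-product of the same manipulations, the $K$-theory class of $\mathsf{E}$ --- which is what governs the virtual class --- becomes, once one restricts to a fixed point $\pt\in|\beta|$ and cancels the tautological and tangent-bundle terms against those generated in (i), the class $R\Gamma(\beta)\otimes\O-R\hom_\pi(\I_0,\I_1(\beta))$, whose rank is $n_0+n_1$ by \eqref{n0n1beta}.

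Finally I would pass from the zero locus to its Euler class. By the Carlsson--Okounkov vanishing quoted in the footnote to the statement, the Chern classes of $R\Gamma(\beta)\otimes\O-R\hom_\pi(\I_0,\I_1(\beta))$ vanish above degree $n_0+n_1$, so its ``Euler class'' is literally $c_{n_0+n_1}$; and the pushforward to $A$ of the virtual class of the zero locus of a section of a two-term complex of vector bundles over a smooth ambient space is the (localised) Euler class of that complex capped with $[A]$ (cf.\ \cite{FG}). It remains to account for the scalar $\SW(\beta)$: the $|\beta|$-directions of $A$ together with $R\pi_*\O_S(\beta)$ decouple from the $S^{[n_0]}\times S^{[n_1]}$-directions, and the Euler-class calculus on that part is, essentially by definition, the algebraic Seiberg--Witten invariant of $\beta$ in Mochizuki's normalisation --- which both explains the factor $\SW(\beta)$ and why the answer is naturally a class on $S^{[n_0]}\times S^{[n_1]}\times\{\pt\}$ pushed forward along $S^{[n_0]}\times S^{[n_1]}\times\{\pt\}\hookrightarrow A$.

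I expect step (i) to be the main obstacle: identifying the $\C^*$-fixed symmetric obstruction theory inherited from $N$ with the degeneracy-locus obstruction theory on $Z(\sigma)\subset A$, keeping track of every twist and $\C^*$-weight and --- most delicately --- isolating the Seiberg--Witten scalar so that it factors out cleanly. A secondary nuisance is that the naive sheaves involved (for instance $\hom_\pi(\I_0,\O_{\cZ_1})$) fail to be locally free where $Z_0$ and $Z_1$ collide, which is precisely why one must work with the honest perfect complex $\mathsf{E}$ and invoke Carlsson--Okounkov vanishing before the Euler-class formalism can be applied.
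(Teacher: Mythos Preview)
The paper does not contain its own proof of this statement: Theorem~\ref{GT} is stated with attribution to Gholampour--Thomas \cite{GT1,GT2} and is used as a black box. So there is no ``paper's own proof'' to compare against; your proposal is a reconstruction of the argument from the cited references.

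That said, your outline is broadly faithful to what the paper \emph{describes} about the Gholampour--Thomas argument: the realisation of $S_\beta^{[n_0,n_1]}$ as a degeneracy locus inside the smooth ambient $S^{[n_0]}\times S^{[n_1]}\times|\beta|$, the identification of the $\C^*$-localised perfect obstruction theory with the one coming from that description (cf.\ the discussion preceding \eqref{Emono} and the explicit $K$-class $V_{n_0,n_1,\beta}$), the use of Carlsson--Okounkov vanishing to interpret $e(\cdot)=c_{n_0+n_1}(\cdot)$, and the decoupling of the $|\beta|$-factor which produces the scalar $\SW(\beta)$. Your identification of step~(i) --- matching the two obstruction theories with all twists and $\C^*$-weights --- as the crux is accurate, and indeed this is precisely the content of \cite{GT1,GT2} that the present paper does not reproduce.
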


\begin{remark} \label{ignorestab}
Not all $n_0,n_1,\beta$ satisfying \eqref{n0n1beta} correspond to spaces $S_{\beta}^{[n_0,n_1]}$ containing Gieseker $H$-stable Higgs sheaves on $S$ with Chern classes $c_1,c_2$. However, such components still have a virtual class given by the formula of Theorem \ref{GT} (induced by realizing $S_\beta^{[n_0,n_1]}$ as an incidence locus inside $S^{[n_0]} \times S^{[n_1]} \times |\beta|$ \cite{GT1}). Laarakker proves that components $S_{\beta}^{[n_0,n_1]}$, which are not part of $M^{\mon}$, satisfy $[S_{\beta}^{[n_0,n_1]}]^{\vir}=0$ \cite{Laa2}. 
This implies we may as well consider \emph{all} $n_0,n_1,\beta$ satisfying \eqref{n0n1beta} and their corresponding spaces $S_{\beta}^{[n_0,n_1]}$.
\end{remark}

\begin{remark} \label{differentvd}
Unlike the instanton branch, it may happen that the monopole branch $M^{\mon}$ of $N_S^H(2,c_1,c_2)^{\C^*}$ has components of different virtual dimension with respect to the $\C^*$-localized perfect obstruction theory. A component $S_{\beta}^{[n_0,n_1]} \subset M^{\mon}$, where $n_0,n_1,\beta$ satisfy \eqref{n0n1beta}, has virtual dimension $n_0 +n_1$. As an example, take $S \rightarrow \PP^2$ a double cover branched over a smooth curve of degree 10, then $K_S = 2L$, where $L \subset S$ is the pull-back of the line from $\PP^2$.  Let $H = L$, $c_1=K_S$, and $c_2 \geq 3$ odd, then $\gcd(2,c_1H,\frac{1}{2}c_1(c_1-K_S) - c_2)=1$, in which case there are no rank 2 strictly Gieseker $H$-semistable Higgs sheaves on $S$ with Chern classes $c_1, c_2$. For $\beta = 0$ and any $0 \leq n_1 \leq n_0$ such that $c_2=n_0+n_1$, we obtain a non-empty component of virtual dimension $c_2$. For $\beta = K_S$ and any $0 \leq n_0 < n_1$ such that $c_2=n_0+n_1+2$, we obtain a non-empty component of virtual dimension $c_2-2$. In both cases, the elements of the component correspond to Gieseker $H$-stable Higgs sheaves. Also note that in this example $\beta=0,K_S$ are the Seiberg-Witten basic classes of $S$.
\end{remark}

Although the virtual dimension of the monopole branch is in general \emph{not} given by \eqref{vd}, we still define
$$
\vd(2,c_1,c_2) := \vd =  4c_2 - c_1^2 - 3 \chi(\O_S)
$$
and use $x^{\vd}$ as the formal variable of our generating series.

\subsection{Virtual normal bundle and $\mu(c_1(L))$-insertion} 

The (dual) Tanaka-Thomas perfect obstruction theory is given by \cite{TT1}
$$
E_{\mathrm{TT}}^{\mdot \vee} = R\hom_\pi(\E,\E \otimes K_S \otimes \t)_0 - R\hom_\pi(\E,\E)_0.
$$
Using \eqref{Emono}, the class of $E_{\mathrm{TT}}^{\mdot \vee}|_{S_\beta^{[n_0,n_1]}}$ in $K^{0}_{\C^*}(S^{[n_0,n_1]}_{\beta})$ equals the restriction of the following element of $K^{0}_{\C^*}(S^{[n_0]} \times S^{[n_1]}  \times |\beta|)$ 
\begin{align*}
V_{n_0,n_1,\beta}:=\,&R\hom_\pi( \I_0, \I_1(\beta) \otimes \O(1)) +R\Gamma(\O_S) \otimes \O \\
&- R\hom_\pi( \I_0, \I_0) - R\hom_\pi( \I_1, \I_1)  \\
&+R\hom_\pi(\I_1(\beta) \otimes \O(1), \I_0 \otimes K_S^2 \otimes \t^2) -R\Gamma(K_S \otimes \t) \otimes \O \\
&+ R\hom_\pi(\I_0, \I_0 \otimes K_S \otimes \t) + R\hom_\pi(\I_1, \I_1 \otimes K_S \otimes \t) \\
&- R\hom_\pi( \I_0, \I_1(\beta) \otimes K_S^* \otimes \t^{-1}) - R\hom_\pi(  \I_1(\beta) \otimes K_S^* \otimes \t^{-1}, \I_0),
\end{align*}
where lines 1--2 have $\C^*$-weight zero and lines 3--5 have non-zero $\C^*$-weight. We denote by $(\cdot)^{\mov}$ the weight $\neq 0$ part of a complex and by $(\cdot)^{\C^*}$ the weight zero part. Therefore, on  $S_{\beta}^{[n_0,n_1]}$, the virtual normal bundle $N^{\vir}$ and $\C^*$-localized perfect obstruction theory are given by
\begin{align*}
N^{\vir} := (E_{\mathrm{TT}}^{\mdot \vee})^{\mov} &= V_{n_0,n_1,\beta}^{\mov}|_{S_\beta^{[n_0,n_1]}}, \\
(E_{\mathrm{TT}}^{\mdot \vee})^{\C^*} &= V_{n_0,n_1,\beta}^{\C^*}|_{S_{\beta}^{[n_0,n_1]}}.
\end{align*}

Finally, we write
$$
V_{n_0,n_1}:=V_{n_0,n_1,\beta}|_{S^{[n_0]} \times S^{[n_1]} \times \{\pt\}} \in K^{0}_{\C^*}(S^{[n_0]} \times S^{[n_1]}).
$$
This restriction essentially amounts to removing $\O(1)$ from the expression of $V_{n_0,n_1,\beta}$. Using Theorem \ref{GT}, we conclude that the contribution of $S_{\beta}^{[n_0,n_1]}$ to $\chi(N,\widehat{\O}_{N}^{\vir} \otimes \mu(L))$ equals
\begin{align*}
\SW(\beta) \cdot \int_{S^{[n_0]} \times S^{[n_1]}} e\big( R\Gamma(\beta) \otimes \O - R\hom_{\pi}(\I_0,\I_1(\beta)\big) \\ 
\cdot \frac{\ch(\sqrt{ \det (V_{n_0,n_1})^{\vee}})}{\ch(\Lambda_{-1} (V_{n_0,n_1}^{\mov})^{\vee})} \, e^{\mu(c_1(L))} \, \td(V_{n_0,n_1}^{\C^*}).
\end{align*}
 Here we used that $\mu(c_1(L)) =  \pi_*(\pi_S^* c_1(L) \cap (-\ch_2(\EE) + \frac{1}{4}c_1(\EE)^2)$ 
 restricted to $S_{\beta}^{[n_0,n_1]} $ also pulls back from an expression on $S^{[n_0]} \times S^{[n_1]} \times |\beta|$. On 
 $$
 S^{[n_0]} \times S^{[n_1]} \times \{\pt\} \subset S^{[n_0]} \times S^{[n_1]} \times |\beta|
 $$
 this expression is given by
 \begin{align*}
 \mu(c_1(L)) = \,&\pi_*\left(\pi_S^* c_1(L) \cdot (-\ch_2(\I_0) - \ch_2(\I_1)) \cap [ S^{[n_0]} \times S^{[n_1]} \times S]\right) \\
 &- \frac{1}{4} \int_S L \cdot \Big(\frac{c_1-\beta+K_S}{2} \Big)^2 - \frac{1}{4} \int_S L \cdot \Big(\frac{c_1+\beta-K_S}{2} - t \Big)^2 \\
 &+ \frac{1}{2} \int_S L \cdot \Big(\frac{c_1-\beta+K_S}{2} \Big)\Big(\frac{c_1+\beta-K_S}{2} - t \Big) \\
=\, &\pi_* \left(\pi_S^* c_1(L) \cdot (-\ch_2(\I_0) - \ch_2(\I_1)) \cap  [ S^{[n_0]} \times S^{[n_1]} \times S] \right) \\
&+ \frac{t}{2} \int_S L \cdot ( \beta - K_S),
 \end{align*}
where the equivariant integrals $\int_S(\cdots)  \in K^0_{\C^*}(\pt) = \Z[t^{\pm 1}]$ are multiplied with the fundamental class $[S^{[n_0]} \times S^{[n_1]}]$ and we are suppressing some Poincar\'e duals. Exponentiating and using  $y:=e^t$ gives 
$$
e^{\mu(c_1(L))} = y^{\frac{1}{2} L(\beta - K_S)} e^{\pi_*\left(\pi_S^* c_1(L) \cdot (-\ch_2(\I_0) - \ch_2(\I_1)) \cap  [S^{[n_0]} \times S^{[n_1]} \times S]\right)}.
$$

\subsection{Universal series} \label{mon3}

Let $S$ be any smooth projective surface not necessarily satisfying $b_1(S) = 0$ and $p_g(S)>0$. For any $L,\beta \in \Pic(S)$ and $n_0,n_1$, the expressions 
$$
V_{n_0,n_1},  \, \mu(c_1(L)) \in K^0_{\C^*}(S^{[n_0]} \times S^{[n_1]})
$$ 
are defined as in the previous paragraph. We define
\begin{align*}
\sfZ_S^{\mon}(L,\beta,y,q) :=y^{-\frac{1}{2} L(\beta-K_S)} \Bigg( \frac{-1}{y^{\frac{1}{2}}+y^{-\frac{1}{2}}} \Bigg)^{-\chi(\beta-K_S)} (y^{\frac{1}{2}} - y^{-\frac{1}{2}})^{-\chi(\beta) + \chi(\O_S)} \\
\cdot \sum_{n_0,n_1 \geq 0} q^{n_0+n_1} \int_{S^{[n_0]} \times S^{[n_1]}} e\big( R\Gamma(\beta) \otimes \O - R\hom_{\pi}(\I_0,\I_1(\beta)\big) \\ 
\cdot \frac{\ch(\sqrt{ \det (V_{n_0,n_1})^{\vee}})}{\ch(\Lambda_{-1} (V_{n_0,n_1}^{\mov})^{\vee})} \, e^{\mu(c_1(L))} \, \td(V_{n_0,n_1}^{\C^*}).
\end{align*}
Here the first line is a normalization factor which ensures that 
$$
\sfZ_S^{\mon}(L,\beta,y,q) \in 1 + q \, \Q[y^{\pm \frac{1}{2}}] [[q]].
$$
The normalization factor can be computed as follows. Putting $n_0=n_1=0$, the definition of $V_{n_0,n_1}$ gives
\begin{align}
\begin{split} \label{V00}
V_{0,0}=&R\Gamma(\O_S(\beta)) - R\Gamma(\O_S)  +R\Gamma(\O_S(-\beta+ 2K_S) \otimes \t^2) \\
&+R\Gamma(\O_S(K_S) \otimes \t) - R\Gamma(\O_S(\beta-K_S) \otimes \t^{-1}) - R\Gamma(\O_S(-\beta+K_S) \otimes \t).
\end{split}
\end{align}
Using
$$
\frac{\ch(\sqrt{L^*})}{\ch(\Lambda_{-1} L^*)} = \frac{1}{e^{\frac{1}{2} c_1(L)} - e^{-\frac{1}{2} c_1(L)}}
$$
combined with Serre duality and $y = e^t$, we obtain
\begin{align*}
&e^{\mu(c_1(L))} \, \frac{\ch(\sqrt{ \det (V_{0,0})^{\vee}})}{\ch(\Lambda_{-1} (V_{0,0}^{\mov})^{\vee})} \, \td(V_{0,0}^{\C^*})  \\
&=y^{\frac{1}{2} L(\beta-K_S)} \Bigg( \frac{y^{-\frac{1}{2}} - y^{\frac{1}{2}}}{y-y^{-1}} \Bigg)^{\chi(\beta-K_S)} (y^{\frac{1}{2}} - y^{-\frac{1}{2}})^{\chi(\beta) - \chi(\O_S)}.
\end{align*}

The generating series $\sfZ_S^{\mon}(L,\beta,y,q)$ has the following universal property.
\begin{lemma} \label{B}
There exist universal functions 
$$
B_1(y,q),\ldots , B_{7}(y,q) \in 1+q \, \Q[y^{\pm \frac{1}{2}}][[q]]
$$ 
such that for any smooth projective surface $S$ and $L,\beta \in A^1(S)$ we have
$$
\sfZ_S^{\mon}(L,\beta,y,q) = B_1^{L^2} B_2^{L \beta} B_3^{\beta^2} B_4^{LK_S} B_5^{\beta K_S} B_6^{K_S^2} B_7^{\chi(\O_S)}.
$$
\end{lemma}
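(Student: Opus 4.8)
The plan is to mimic the proof of Lemma~\ref{A} (and its model \cite[Lem.~5.5]{GNY1}, \cite[Prop.~3.3]{GK1}), replacing the instanton series $\sfZ_S^{\inst}$ by $\sfZ_S^{\mon}$ throughout. The argument has two ingredients: a universality (polynomiality-in-Chern-numbers) statement, and a multiplicativity statement under disjoint unions. For universality, I would first observe that every term entering the coefficient of $q^{n_0+n_1}$ in $\sfZ_S^{\mon}(L,\beta,y,q)$ is a tautological integral over the product of Hilbert schemes $S^{[n_0]}\times S^{[n_1]}$: indeed $V_{n_0,n_1}$, $\mu(c_1(L))$, and $e\big(R\Gamma(\beta)\otimes\O - R\hom_\pi(\I_0,\I_1(\beta))\big)$ are all built out of the tautological classes $\ch(\I_0)$, $\ch(\I_1)$, tautological bundles of $\O_S(\beta)$, $\O_S(K_S)$, etc., together with the equivariant parameter $t$ (which is a formal variable, not a geometric input). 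By the results of Ellingsrud--Göttsche--Lehn \cite{EGL} and their extension to products of Hilbert schemes in \cite[Lem.~5.5]{GNY1}, such integrals are, for each fixed $(n_0,n_1)$, a polynomial in the Chern numbers $L^2, L\beta, \beta^2, LK_S, \beta K_S, K_S^2, \chi(\O_S)$ with coefficients in $\Q[y^{\pm\frac12}]$ (after the normalization, which only contributes the elementary closed factors computed via \eqref{V00}). Hence there is a universal $G\in\Q[y^{\pm\frac12}][x_1,\dots,x_7][[q]]$ with
$$
\sfZ_S^{\mon}(L,\beta,y,q) = e^{G(L^2,L\beta,\beta^2,LK_S,\beta K_S,K_S^2,\chi(\O_S))},
$$
using that the series starts with $1$.

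The second ingredient is multiplicativity: for $S=S'\sqcup S''$ with $L=(L',L'')$, $\beta=(\beta',\beta'')$, one has $\sfZ_S^{\mon}(L,\beta,y,q)=\sfZ_{S'}^{\mon}(L',\beta',y,q)\,\sfZ_{S''}^{\mon}(L'',\beta'',y,q)$. This is proved exactly as Lemma~\ref{mult}: decompose $S^{[n_0]}\times S^{[n_1]}$ into a disjoint union over $S'^{[l_0]}\times S''^{[m_0]}\times S'^{[l_1]}\times S''^{[m_1]}$ with $l_0+m_0=n_0$, $l_1+m_1=n_1$, check that on each piece the integrand factors as an external product of the integrand for $S'$ and the integrand for $S''$ (the $\mu(c_1(L))$-insertion splits additively and hence multiplicatively after exponentiating, precisely as in the displayed diagram in the proof of Lemma~\ref{mult}; the Euler class $e\big(R\Gamma(\beta)\otimes\O-R\hom_\pi(\I_0,\I_1(\beta))\big)$ splits because $R\Gamma$ and $R\hom_\pi$ are additive over the connected components $S'$, $S''$; and the normalization factor is manifestly multiplicative in the Chern numbers), and use that the integral of an external product is the product of the integrals. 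The only mild subtlety, already addressed in Lemma~\ref{mult}, is bookkeeping the nested decomposition of the product of Hilbert schemes; nothing new arises here.

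Combining the two ingredients gives the conclusion in the standard way: pick seven quadruples $(S^{(i)},L^{(i)},\beta^{(i)})$ whose vectors of Chern numbers $w_i\in\Q^7$ form a basis; for any $(S,L,\beta)$ with Chern-number vector $w=\sum_i n_i w_i$ and all $n_i\in\Z_{\ge0}$, multiplicativity gives $\sfZ_S^{\mon}=\prod_i(e^{G(w_i)})^{n_i}$; writing $W$ for the matrix with columns $w_i$ and $M=(m_{ij})=W^{-1}$, and setting $B_j:=\exp\big(\sum_i m_{ij}G(w_i)\big)\in 1+q\,\Q[y^{\pm\frac12}][[q]]$, one gets $\sfZ_S^{\mon}=B_1^{L^2}B_2^{L\beta}B_3^{\beta^2}B_4^{LK_S}B_5^{\beta K_S}B_6^{K_S^2}B_7^{\chi(\O_S)}$ whenever all $n_i\in\Z_{\ge0}$; since the formula is an identity of polynomial expressions in $w$ (by the universality statement), and the set of such $w$ is Zariski dense in $\Q^7$, it holds for all $(S,L,\beta)$. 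The main obstacle — really the only place requiring care — is the universality input: one must be sure that all the classes appearing in the definition of $\sfZ_S^{\mon}$ genuinely fall within the class of tautological integrals covered by \cite{EGL} and \cite[Lem.~5.5]{GNY1}, i.e.\ that the $\C^*$-equivariant refinements ($\sqrt{\det V^\vee}$, $\Lambda_{-1}(V^{\mov})^\vee$, $\td(V^{\C^*})$) only introduce the formal parameter $y$ and do not spoil the polynomial dependence on Chern numbers; this is routine but should be spelled out, as it is the analogue of the corresponding verification in the proof of Lemma~\ref{A}.
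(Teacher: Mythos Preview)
Your proposal is correct and follows essentially the same approach as the paper's own proof: the paper invokes Laarakker \cite[Sect.~8]{Laa2} for the $L=\O_S$ case and then organizes the argument into exactly your two steps (multiplicativity under disjoint union, with the $\mu(c_1(L))$-insertion handled as in Lemma~\ref{mult}, and universality via \cite{EGL}, \cite[Lem.~5.5]{GNY1} as in Lemma~\ref{A}). The only cosmetic difference is the order in which the two steps are presented.
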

\begin{proof}
The case $L=\O_S$ is proved (for any rank $r$) in \cite[Sect.~8]{Laa2}. The strategy is similar to the proof of Proposition \ref{A}: \\

\noindent \textbf{Step 1: Multiplicativity.} Let $S = S' \sqcup S''$, where $S', S''$ are possibly disconnected smooth projective surfaces. Let $L,\beta \in A^1(S)$ and define $L':=L|_{S'}$, $\beta':=\beta|_{S'}$, $L'':=L|_{S''}$, and $\beta'':=\beta|_{S''}$. Then
\begin{equation*} 
\sfZ_S^{\mon}(L,\beta,y,q) = \sfZ_{S'}^{\mon}(L',\beta',y,q) \, \sfZ_{S''}^{\mon}(L'',\beta'',y,q).
\end{equation*}
The only new feature compared to \cite[Sect.~8]{Laa2} is the insertion 
$$
\pi_*\left(\pi_S^* c_1(L) \cdot (-\ch_2(\I_0) - \ch_2(\I_1)) \cap  [S^{[n_0]} \times S^{[n_1]}]\right),
$$ 
which we discussed in Lemma \ref{mult}. \\

\noindent \textbf{Step 2: Universality.} This is proved as in Lemma \ref{A}.
\end{proof}

\begin{lemma} \label{C}
Let $S$ be a smooth projective surface with $b_1(S) = 0$, $p_g(S) >0$, and $L \in \Pic(S)$. Let $H, c_1,c_2$ be chosen such that there exist no rank 2 strictly Gieseker $H$-semistable Higgs sheaves on $S$ with Chern classes $c_1,c_2$. For $\vd$ given by \eqref{vd}, the monopole contribution to $\chi(N, \widehat{\O}^{\vir}_N \times \mu(L))$ is given by the coefficient of $(-x)^{\vd}$ of 
\begin{align*} 
 &\sum_{\beta\in H^2(S,\Z)} \, \delta_{c_1,K_S-\beta} \, \SW(\beta) \, B_1(y,x^4)^{L^2}  \Bigg( y^{\frac{1}{2}} B_2(y,x^4) \Bigg)^{L \beta} \\
 &\cdot \Bigg(  \Bigg(\frac{-1}{y^{\frac{1}{2}}+y^{-\frac{1}{2}}}\Bigg)^{\frac{1}{2}} (y^{\frac{1}{2}} - y^{-\frac{1}{2}})^{\frac{1}{2}} (-x)^{-1} B_3(y,x^4) \Bigg)^{\beta^2} \\
 &\cdot \Bigg( y^{-\frac{1}{2}} B_4(y,x^4) \Bigg)^{LK_S} \Bigg( \Bigg(\frac{-1}{y^{\frac{1}{2}}+y^{-\frac{1}{2}}} \Bigg)^{-\frac{3}{2}} (y^{\frac{1}{2}} - y^{-\frac{1}{2}})^{-\frac{1}{2}} (-x)^2 B_5(y,x^4) \Bigg)^{\beta K_S} \\
 &\cdot \Bigg( \Bigg(\frac{-1}{y^{\frac{1}{2}}+y^{-\frac{1}{2}}}\Bigg) (-x)^{-1} B_6(y,x^4) \Bigg)^{K_S^2} \Bigg( \Bigg( \frac{-1}{y^{\frac{1}{2}}+y^{-\frac{1}{2}}} \Bigg) (-x)^{-3} B_7(y,x^4) \Bigg)^{\chi(\O_S)}.
\end{align*}
\end{lemma}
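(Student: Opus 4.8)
The plan is to combine the geometric input already assembled in this section --- Gholampour--Thomas's description of the monopole virtual class (Theorem~\ref{GT}), the reduction of Remark~\ref{ignorestab}, the explicit component-by-component contributions computed in the preceding subsections, and the universality of Lemma~\ref{B} --- and then to carry out the bookkeeping that turns the generating variable $q$ of $\sfZ_S^{\mon}$ into powers of $-x$ and absorbs the normalisation factor into the displayed base changes.

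\emph{Reduction to a sum of tautological integrals.} By definition the monopole contribution to $\chi(N,\widehat{\O}^{\vir}_N\otimes\mu(L))$ is the sum over connected components $M'$ of $M^{\mon}\subset N^{\C^*}$ of the virtual--localisation contribution of $M'$. By Theorem~\ref{GT} each $M'$ is isomorphic to a space $S^{[n_0,n_1]}_\beta$ with $(n_0,n_1,\beta)$ satisfying \eqref{n0n1beta}, specified together with a square root $L_0\in\Pic(S)$ of $c_1-\beta+K_S$ (and $L_1:=c_1-L_0$); by Remark~\ref{ignorestab} we may freely enlarge the sum to all such data, the added terms carrying zero virtual class. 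The preceding subsection computes the contribution of each $S^{[n_0,n_1]}_\beta$ as $\SW(\beta)$ times the tautological integral over $S^{[n_0]}\times S^{[n_1]}$ appearing in the definition of $\sfZ_S^{\mon}$. Two observations make the sum collapse. First, by \eqref{Emono} and the formulae for $V_{n_0,n_1}$ and for the $\mu(c_1(L))$-insertion, that integrand depends on $L_0$ only through $L_1-L_0=\beta-K_S$, hence is the same for every square-root choice; moreover \eqref{n0n1beta} forces $n_0+n_1=c_2-L_0L_1$ with $L_0L_1=\tfrac14\bigl(c_1^2-(\beta-K_S)^2\bigr)$, which is independent of the torsion ambiguity in $L_0$ since torsion classes pair trivially in $H^2(S,\Z)$. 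Second, the number of square roots $L_0$ for a fixed $(n_0,n_1,\beta)$ equals $\delta_{c_1,K_S-\beta}$; since $NS(S)=\Pic(S)$ is saturated in $H^2(S,\Z)$ when $b_1(S)=0$ (being the kernel of the map $H^2(S,\Z)\to H^2(\O_S)$ to a torsion-free group), this count may equivalently be taken in $H^2(S,\Z)$ as in \eqref{defdelta}. Hence the monopole contribution equals
\[
\sum_{\beta}\delta_{c_1,K_S-\beta}\,\SW(\beta)\sum_{n_0,n_1}\int_{S^{[n_0]}\times S^{[n_1]}}\bigl(\text{summand of }\sfZ_S^{\mon}\bigr),
\]
where the inner sum runs over $n_0+n_1=c_2-\tfrac14(c_1^2-(\beta-K_S)^2)$.

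\emph{Repackaging and distributing.} By the definition of $\sfZ_S^{\mon}(L,\beta,y,q)$ the inner sum is the coefficient of $q^{\,n_0+n_1}$ in $\mathcal{N}(\beta)^{-1}\,\sfZ_S^{\mon}(L,\beta,y,q)$, where the normalisation factor $\mathcal{N}(\beta)$ was computed above from the $V_{0,0}$-formula \eqref{V00}, Serre duality and $y=e^t$, giving $\mathcal{N}(\beta)^{-1}=y^{\frac12 L(\beta-K_S)}\bigl(\tfrac{-1}{y^{1/2}+y^{-1/2}}\bigr)^{\chi(\beta-K_S)}(y^{1/2}-y^{-1/2})^{\chi(\beta)-\chi(\O_S)}$. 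Substituting $q=x^4$ and working with the variable $-x$ (harmless as $(-x)^4=x^4$), the rearrangement $4\bigl(c_2-\tfrac14(c_1^2-(\beta-K_S)^2)\bigr)=\vd+3\chi(\O_S)+\beta^2-2\beta K_S+K_S^2$ of \eqref{vd} shows that taking the coefficient of $q^{\,n_0+n_1}$ is the same as taking the coefficient of $(-x)^{\vd}$ after multiplication by $(-x)^{-3\chi(\O_S)-\beta^2+2\beta K_S-K_S^2}$. Now invoke Lemma~\ref{B} to write $\sfZ_S^{\mon}(L,\beta,y,x^4)=B_1^{L^2}B_2^{L\beta}B_3^{\beta^2}B_4^{LK_S}B_5^{\beta K_S}B_6^{K_S^2}B_7^{\chi(\O_S)}$ (all $B_i=B_i(y,x^4)$) and distribute the remaining prefactors into the seven bases: the residual powers of $-x$ enter the $\beta^2,\beta K_S,K_S^2,\chi(\O_S)$-bases as $(-x)^{-1},(-x)^2,(-x)^{-1},(-x)^{-3}$, while the three factors of $\mathcal{N}(\beta)^{-1}$, after using Riemann--Roch on $S$ to write $\chi(\beta)=\chi(\O_S)+\tfrac12\beta(\beta-K_S)$ and $\chi(\beta-K_S)=\chi(\O_S)+\tfrac12(\beta-K_S)(\beta-2K_S)$ as polynomials in $L^2,L\beta,\beta^2,LK_S,\beta K_S,K_S^2,\chi(\O_S)$, contribute exactly the $y^{\pm1/2}$, $\tfrac{-1}{y^{1/2}+y^{-1/2}}$ (to the powers $1/2,-3/2,1,1$) and $y^{1/2}-y^{-1/2}$ factors appearing in the statement. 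Collecting everything reproduces the asserted formula.

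\emph{Main difficulty.} All the geometric ingredients are already available, so the real labour is the bookkeeping of the last paragraph: the precise reindexing $q^{\,n_0+n_1}\leftrightarrow(-x)^{\vd}$ and, after Riemann--Roch, the precise splitting of $\mathcal{N}(\beta)^{-1}$ and of the leftover powers of $-x$ across $B_1,\dots,B_7$. The one genuinely conceptual point is that all components sharing a given $(\beta,n_0,n_1)$ contribute equally and that there are $\delta_{c_1,K_S-\beta}$ of them; this rests on Lemma~\ref{B} (dependence only on the intersection numbers of $L,\beta,K_S$) together with the saturation of $NS(S)$ in $H^2(S,\Z)$.
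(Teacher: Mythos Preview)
Your proof is correct and follows essentially the same approach as the paper's own proof: reduce to all $(n_0,n_1,\beta)$ via Remark~\ref{ignorestab}, identify the multiplicity $\delta_{c_1,K_S-\beta}$ (the paper defers this step to \cite[Sect.~8]{Laa2}, whereas you spell out the saturation argument), rewrite the $q^{n_0+n_1}$-coefficient as a $(-x)^{\vd}$-coefficient via \eqref{vd}, and then invoke Lemma~\ref{B} and Riemann--Roch to distribute the normalisation factor among the $B_i$. Your presentation is in fact more explicit than the paper's, which merely records the intermediate expression $(-1)^{\vd}\,\mathcal{N}(\beta)^{-1}\,\sfZ_S^{\mon}(L,\beta,y,x^4)$ and leaves the final distribution to the reader.
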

\begin{proof}
By Remark \ref{ignorestab},  we sum the contributions to the invariant of $S_\beta^{[n_0,n_1]}$ for all $\beta \in H^2(S,\Z)$, $n_0, n_1 \in \Z_{\geq 0}$ such that $c_1 + \beta - K_S \in 2H^2(S,\Z)$ and 
$$
c_2 =  n_0+n_1+\Big( \frac{c_1-\beta+K_S}{2} \Big) \Big( \frac{c_1+\beta-K_S}{2} \Big),
$$
or, equivalently, $\vd = 4(n_0+n_1) - (\beta-K_S)^2 - 3 \chi(\O_S)$. As shown in \cite[Sect.~8]{Laa2}, this gives $\sum_{\beta\in H^2(S,\Z)} \, \delta_{c_1,K_S-\beta} \, \SW(\beta) \cdot  (\cdots)$, where $\delta_{a,b}$ was defined in \eqref{defdelta}, and $(\cdots)$ equals the coefficient of $(-1)^{\vd} x^{\vd}$ of
$$
(-1)^{\vd} y^{\frac{1}{2} L(\beta - K_S)} \Bigg( \frac{y^{-\frac{1}{2}} - y^{\frac{1}{2}}}{y-y^{-1}} \Bigg)^{\chi(\beta-K_S)} (y^{\frac{1}{2}} - y^{-\frac{1}{2}})^{\chi(\beta) - \chi(\O_S)}  \sfZ_S^{\mon}(L,\beta,y,x^4).
$$ 
Lemma \ref{B} then gives $\sfZ_S^{\mon}(L,\beta,y,x^4)$ in terms of the universal series $B_i$.
\end{proof}

\begin{proof}[Proof of Theorem \ref{thm1}]
There are finitely many $\beta\in H^2(S,\Z)$ for which $\SW(\beta) \neq 0$. These classes satisfy $\beta^2 = \beta K_S$ \cite[Prop.~6.3.1]{Moc}. The theorem follows by defining $C_1:=B_7$, $C_2:=B_6$, $C_3:=B_1$, $C_4:=B_4$, $C_5 := B_3 B_5$, $C_6:=B_2$.
\end{proof}

\subsection{Reduction to toric surfaces} 

Consider the following 7 choices of $(S,L,\beta)$ for which the corresponding vectors of Chern numbers $(L^2,\ldots,\chi(\O_S))$ are $\Q$-independent:
\begin{align*}
(S,L,\beta) = &(\PP^2,\O,\O), \\ 
&(\PP^2,\O(-3),\O), \\ 
&(\PP^2,\O(-6),\O), \\ 
&(\PP^2,\O,\O(6)), \\ 
&(\PP^2,\O,\O(-6)), \\ 
&(\PP^2,\O(-3),\O(-6)), \\ 
&(\PP^1 \times \PP^1,\O,\O).
\end{align*}
In each case, localization (as in Section \ref{toricA}) reduces the series $\sfZ_S^{\mon}(L,\beta,y,q)$ to a purely combinatorial expression. In this way, we determined the universal series $B_1, \ldots, B_7$ modulo $q^{15}$. For our calculations, we used (and slightly adapted) a SAGE program of Laarakker, which was used for the calculation of $K$-theoretic Vafa-Witten invariants in \cite{Laa2}. Using the definitions of $C_1, \ldots, C_6$ in terms of $B_1, \ldots, B_7$, we obtain \eqref{Cpart1} and \eqref{Cpart2}.

 \subsection{K3 surfaces} \label{mon4}
 
In this section, we consider $\sfZ_S^{\mon}(L,\beta,y,q)$ when $S$ is a K3 surface and $\beta = 0$. Note that $0$ is the only Seiberg-Witten basic class of a K3 surface and $\SW(0) = 1$. Let $\iota : S_0^{[n_0,n_1]} \hookrightarrow S^{[n_0]} \times S^{[n_1]}$ be the natural inclusion. Laarakker \cite[Sect.~10]{Laa2} observes that 
 \begin{equation} \label{diagcycle}
 \iota_* [S_0^{[n_0,n_1]}]^{\vir} = \left\{ \begin{array}{cc} \Delta_* S^{[n]} & \textrm{when \ } n_0=n_1=n \\ 0 & \textrm{otherwise,} \end{array} \right.
 \end{equation}
 where $\Delta : S^{[n]} \hookrightarrow S^{[n]} \times S^{[n]}$ is the diagonal embedding. In other words, only universally thickened nestings $Z_0 = Z_1$ contribute to the invariants.\footnote{The case $n_0=n_1=n$ appears in \cite{GSY1}.} This fact is explained geometrically using cosection localization in \cite[Sect.~5.3]{Tho}. This gives a simplication of $V_{n,n,0}$ (derived in \cite[Sect.~10]{Laa2} for any rank $r$)
 \begin{equation} \label{diagV}
 \Delta^* V_{n,n} = T_{S^{[n]}} + T_{S^{[n]}} \otimes \t^{-1} - T_{S^{[n]}} \otimes \t - T_{S^{[n]}} \otimes \t^2 + V_{0,0},
 \end{equation}
 where $V_{0,0}$ is the normalization term \eqref{V00}, which should be viewed as pulled back from $S^{[n]} \rightarrow \pt$. Using \eqref{diagcycle} and \eqref{diagV}, Laarakker expresses the universal function $C_1$ of Theorem \ref{thm1} in terms of
$$
\chi_{y}(S^{[n]}) = \chi(S^{[n]},\Lambda_{y} \Omega_{S^{[n]}}), \quad \textrm{where \ } S=\textrm{K3}.
$$
In turn, $\chi_y$-genera of Hilbert schemes of points on K3 surfaces were calculated by the first named author and W.~Soergel \cite{GS}.

Recently, using Borisov-Libgober's proof of the Dijkgraaf-Moore-Verlinde-Verlinde formula \cite{BL}, the first named author found a formula for elliptic genera, with values in a line bundle, of Hilbert schemes of points on surfaces \cite{Got2}. We briefly discuss this result. Let $S$ be any smooth projective surface (not necessarily K3) and $L \in \Pic(S)$. The determinant line bundle on $S^{[n]}$ is $\mu(L) := \det( (L-\O_S)^{[n]} )$.
Its first Chern class is described as follows. Consider projections from the universal subscheme $\cZ \subset S^{[n]} \times S$
\begin{displaymath}
\xymatrix
{
& \cZ \ar[dl]_p \ar[dr]^q & \\
S^{[n]} & & S.
}
\end{displaymath}
Then
\begin{equation} \label{muHilb}
c_1(\mu(L)) = \mu(c_1(L)) := p_* q^* c_1(L) \in H^2(S^{[n]},\Z).
\end{equation}
Specialized to $\chi_y$-genera the results of \cite{Got2} imply: 
\begin{theorem}[G\"ottsche] \label{twistedchiy}
Let $S$ be a smooth projective surface and $L \in \Pic(S)$. Then
\begin{align*}
&\sum_{n=0}^{\infty} \chi(S^{[n]},\Lambda_{-y} \Omega_{S^{[n]}} \otimes  \mu(L)) \, (q y^{-1})^n = \\
&\left( \prod_{n=1}^{\infty} \frac{1}{(1-q^{n})^{10}(1-q^{n}y)(1-q^{n}y^{-1})} \right)^{\chi(\O_S)} \left( \prod_{n=1}^{\infty} (1-q^n) \right)^{K_S^2}  \\
&\left(\prod_{n=1}^{\infty} \left(\frac{(1-q^{n})^2}{(1-q^{n}y)(1-q^{n}y^{-1})}\right)^{n^2}\right)^{\frac{L^2}{2}} \prod_{n= 1}^\infty\left( \left(\frac{1-q^ny^{-1}}{1-q^ny}\right)^n\right)^{\frac{LK_S}{2}}.
\end{align*}
\end{theorem}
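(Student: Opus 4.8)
The plan is to obtain this formula as the $\chi_y$-degeneration of the two-variable elliptic genus formula for Hilbert schemes of points established in \cite{Got2} via the Borisov--Libgober method \cite{BL}. Recall that for a smooth projective variety $X$ of dimension $d$ and a line bundle $\L$ on $X$, the Borisov--Libgober elliptic genus $\mathrm{Ell}(X,\L;y,Q)$ is an integral over $X$ of a characteristic class assembled from the Chern roots of $T_X$, the class $c_1(\L)$, and Jacobi theta functions in the elliptic nome $Q$; its specialization $Q\to 0$ equals $y^{-d/2}$ times the twisted genus $\chi(X,\Lambda_{-y}\Omega_X\otimes\L)$. The main result of \cite{Got2} is a closed product formula
$$\sum_{n\ge 0}\mathrm{Ell}\big(S^{[n]},\mu(L);y,Q\big)\,p^{\,n} \;=\; \big(\cdots\big)^{\chi(\O_S)}\big(\cdots\big)^{K_S^2}\big(\cdots\big)^{\frac12 L^2}\big(\cdots\big)^{\frac12 LK_S},$$
the bracketed factors being explicit theta-/eta-type expressions in $y,Q,p$, and reducing to DMVV when $L=\O_S$; here $\mu(L)=\det\big((L-\O_S)^{[n]}\big)$ always exists, with $c_1(\mu(L))=\mu(c_1(L))$ as in \eqref{muHilb}. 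This input I would quote.

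The first step is to apply the $Q\to 0$ limit to both sides with $X=S^{[n]}$, $\L=\mu(L)$. On the left, $\dim S^{[n]}=2n$, so the normalization factor $y^{-d/2}$ equals $y^{-n}$; multiplying it into $p^{\,n}$ turns the left-hand series into $\sum_n\chi\big(S^{[n]},\Lambda_{-y}\Omega_{S^{[n]}}\otimes\mu(L)\big)\,(py^{-1})^n$, which is the left side of Theorem~\ref{twistedchiy} after renaming $p\mapsto q$. On the right, one substitutes the $Q$-expansions of the theta and eta factors and keeps the lowest-order term in $Q$: the $\theta_1$-type numerators and denominators degenerate to the products $\prod_{n\ge1}(1-q^ny^{\pm1})$ and $\prod_{n\ge1}(1-q^n)$, the modular prefactor contributes the first bracket raised to $\chi(\O_S)$ together with the factor $\prod_{n\ge1}(1-q^n)^{K_S^2}$, and the $c_1(\mu(L))$-dependent factor degenerates to $\prod_{n\ge1}\big(\tfrac{(1-q^n)^2}{(1-q^ny)(1-q^ny^{-1})}\big)^{n^2}$ raised to $\tfrac12 L^2$ and $\prod_{n\ge1}\big(\tfrac{1-q^ny^{-1}}{1-q^ny}\big)^n$ raised to $\tfrac12 LK_S$. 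Matching term by term yields the asserted identity.

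As a consistency check, one can also argue purely on the Hilbert-scheme side: by the universality of tautological integrals over $S^{[n]}$ \cite{EGL} together with $c_1(\mu(L))=\mu(c_1(L))$, the left-hand generating series of Theorem~\ref{twistedchiy} necessarily has the shape $U_1^{L^2}U_2^{LK_S}U_3^{K_S^2}U_4^{\chi(\O_S)}$ for universal series $U_i\in 1+q\,\Q[y^{\pm1}][[q]]$; the $U_i$ can then be pinned down by evaluation on K3 surfaces, abelian surfaces, and blow-ups thereof, where the twisted $\chi_y$-genera are computable along the lines of \cite{GS}. This route is, however, less efficient than the elliptic-genus specialization.

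The main obstacle is bookkeeping around normalizations. One must propagate the $y^{-d/2}$ factor consistently through the infinite product so that the counting variable on the left emerges as $qy^{-1}$ rather than $q$ or $qy^{-1/2}$, and one must check that the $Q\to0$ limit of the $\mu(L)$-dependent factor of \cite{Got2} --- which involves a theta function whose argument is shifted by $c_1(\mu(L))$ --- produces the exponents $\tfrac12 L^2$ and $\tfrac12 LK_S$ (and not $L^2$, $LK_S$) once the shift is expanded in the Chern roots of $T_{S^{[n]}}$. Once the dictionary between the elliptic and $\chi_y$ normalizations is fixed, the rest is a routine comparison of $q$-series coefficients.
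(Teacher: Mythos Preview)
Your proposal is correct and follows exactly the route the paper indicates: the paper does not give a self-contained proof of Theorem~\ref{twistedchiy} but simply states that it is the $\chi_y$-specialization of the twisted elliptic genus formula proved in \cite{Got2} via Borisov--Libgober, which is precisely what you spell out. Your additional remarks on universality and normalization bookkeeping go beyond what the paper writes but are consistent with it.
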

Just like Laarakker requires G\"ottsche-Soergel's result to determine the monopole contribution to $\chi(N,\widehat{\O}_{N}^{\vir})$ for a K3 surface, we will require Theorem \ref{twistedchiy} to determine the monopole contribution to $\chi(N,\widehat{\O}_{N}^{\vir} \otimes \mu(L) )$ for a K3 surface.

Adapting an argument from \cite{GNY2} and combining with Theorem \ref{twistedchiy}, Conjecture \ref{conj2} (and hence Conjecture \ref{conj1}) are proved for K3 surfaces in \cite{Got2}. We now use \eqref{diagcycle}, \eqref{diagV}, and Theorem \ref{twistedchiy} to prove Theorem \ref{thm2}. 
\begin{proof}[Proof Theorem \ref{thm2}]
Let $S$ be a K3 surface. The case $L = \O_S$ was done in \cite{Laa2} and gives $C_1$. Let $L \in \Pic(S)$ be arbitrary. It is useful to work with $V_{n,n}^{\circ} := V_{n,n} - V_{0,0}$, where $V_{0,0}$ is the normalization factor \eqref{V00} pulled back along $S^{[n]} \times S^{[n]} \rightarrow \pt$. For $S$ a K3 surface and $\beta=0$, \eqref{diagcycle} and \eqref{diagV} imply
\begin{align*}
&\sfZ_S^{\mon}(L,0,y,q) =  \sum_{n} q^{2n} \int_{S^{[n]}} e^{\mu(2c_1(L))} \, \Delta^* \frac{\ch(\sqrt{ \det (V_{n,n}^{\circ})^{\vee}})}{\ch(\Lambda_{-1} (V_{n,n}^{\circ})^{\mov \vee})} \, \td((V_{n,n}^{\circ})^{\C^*}),
\end{align*}
where we used 
\begin{align*}
\Delta^* \pi_* \left( \pi_S^* c_1(L) \cdot (-\ch_2(\I_0) -\ch_2(\I_1)) \cap [S^{[n]} \times S^{[n]} \times S] \right) &= \pi_*( \pi_S^* c_1(L) \cap (2[\cZ])) \\
&= \mu(2c_1(L)), 
\end{align*}
where $\cZ \subset S^{[n]} \times S$ is the universal subscheme and $\mu(c_1(L))$ is defined by \eqref{muHilb}.

We require two identities from \cite{Tho}. By \cite[Prop.~2.6]{Tho}, the canonical square root is given by
$$
\sqrt{ \det (V_{n,n}^{\circ})^{\vee}} = \left( \det (V_{n,n}^{\circ})^{\vee} \right)^{\geq 0} \cdot \t^{\frac{1}{2} r_{\geq 0}},
$$
where $(\cdot)^{\geq 0}$ denotes the part with non-negative $\C^*$-weight and $r_{\geq 0}$ is its rank. Moreover, for any complex $E$ we have \cite[(2.28)]{Tho}
\begin{equation} \label{2.28}
\Lambda_{-1} E^\vee \cong (-1)^{\rk E} \Lambda_{-1} E \otimes \det E^\vee.
\end{equation}

Pulling back along $\Delta : S^{[n]} \hookrightarrow S^{[n]} \times S^{[n]}$ and using \eqref{diagV} yields
$$
\Delta^* \sqrt{ (\det V_{n,n}^{\circ})^{\vee}} = \det \left( \Omega_{S^{[n]}} + \Omega_{S^{[n]}} \otimes \t \right) \cdot \t^{2n} = \det(\Omega_{S^{[n]}}) \cdot \det(\Omega_{S^{[n]}} \otimes \t)  \cdot \t^{2n}.
$$
Furthermore
$$
\Delta^* \frac{1}{\Lambda_{-1} (V_{n,n}^{\circ})^{\mov \vee}} = \frac{\Lambda_{-1} (\Omega_{S^{[n]}} \otimes \t^{-1})}{\Lambda_{-1} (\Omega_{S^{[n]}} \otimes \t)} \cdot \Lambda_{-1} (\Omega_{S^{[n]}} \otimes \t^{-2}).
$$
Hence 
\begin{align*}
\Delta^* \frac{\sqrt{ \det (V_{n,n}^{\circ})^{\vee}}}{\Lambda_{-1} (V_{n,n}^{\circ})^{\mov \vee}} =&\,  \det(\Omega_{S^{[n]}}) \cdot \frac{\det(\Omega_{S^{[n]}} \otimes \t)}{\Lambda_{-1} (\Omega_{S^{[n]}} \otimes \t)}  \cdot \t^{2n} \cdot \Lambda_{-1} (\Omega_{S^{[n]}} \otimes \t^{-1}) \cdot  \Lambda_{-1} (\Omega_{S^{[n]}} \otimes \t^{-2}) \\
=&\,  \det(\Omega_{S^{[n]}})  \cdot \t^{2n} \cdot \frac{\Lambda_{-1} (\Omega_{S^{[n]}} \otimes \t^{-1})}{{\Lambda_{-1} (T_{S^{[n]}} \otimes \t^{-1})}}  \cdot \Lambda_{-1}(\Omega_{S^{[n]}} \otimes \t^{-2}) \\
=&\,  \t^{2n} \cdot \Lambda_{-1}(\Omega_{S^{[n]}} \otimes \t^{-2}),
\end{align*}
where the second equality uses \eqref{2.28}, the third equality uses $T_{S^{[n]}} \cong \Omega_{S^{[n]}}$ (because $S^{[n]}$ is holomorphic symplectic), and the last equation uses $K_{S^{[n]}} \cong \O$. Using $y:=e^t$ and Serre duality (see also \cite[Rem.~4.13]{FG}), we find 
\begin{align*}
\sfZ_S^{\mon}(L,0,y,q) &= \sum_{n=0}^{\infty} y^{2n} \chi(S^{[n]}, \Lambda_{-1}(\Omega_{S^{[n]}} \otimes \t^{-2}) \otimes \mu(L \otimes L)) \, q^{2n} \\
&= \sum_{n=0}^{\infty} y^{2n} \chi(S^{[n]}, \Lambda_{-y^{-2}} \Omega_{S^{[n]}} \otimes \mu(L \otimes L)) \, q^{2n} \\
&= \sum_{n=0}^{\infty} y^{-2n} \chi(S^{[n]},  \Lambda_{-y^{2}} \Omega_{S^{[n]}} \otimes \mu(L^* \otimes L^*)) \, q^{2n}
\end{align*}
The result follows from Theorem \ref{twistedchiy} and Lemmas \ref{B}, \ref{C}.
\end{proof}

\subsection{Higher rank} \label{sec:higher}

The methods of Section \ref{mon1}--\ref{mon4} generalize to any rank $r$. Let $S$ be any smooth projective surface with $b_1(S) = 0$ and $p_g(S)>0$. Let $N:=N_S^H(r,c_1,c_2)$. Suppose there are no rank $r$ strictly Gieseker $H$-semistable Higgs sheaves on $S$ with Chern classes $c_1,c_2$. Consider the components of $N$ containing Higgs sheaves $(E,\phi)$ such that
\begin{equation*} 
E = E_0 \oplus E_1 \otimes \t^{-1} \oplus \cdots \oplus E_{r-1} \otimes \t^{-(r-1)}
\end{equation*}
and $\rk E_0 = \cdots = \rk E_{r-1}=1$. We denote the union of such components by $M_{1^r}$. These components are described by Gholampour-Thomas in terms of nested Hilbert schemes \cite{GT1, GT2} (see also \cite{Laa2})
$$
S_{\beta_1, \ldots, \beta_{r-1}}^{[n_0, \ldots, n_r]} \subset S^{[n_0]} \times \cdots \times S^{[n_{r-1}]} \times |\beta_1| \times \cdots \times |\beta_{r-1}|.
$$
Let $L \in \Pic(S)$ and replace $c_2(\EE) - \frac{1}{4} c_1(\EE)^2$ by $c_2(\EE) - \frac{r-1}{2r} c_1(\EE)^2$ in definitions \eqref{muinsert}, \eqref{muinsertmon}. Let 
$$
\vd:=2rc_2 - (r-1)c_1^2 - (r^2-1)\chi(\O_S).
$$ 
Then the contribution of $M_{1^r}$ to $\chi(N,\widehat{\O}_N^{\vir} \otimes \mu(L))$ is given by the coefficient of $(-x)^{\vd}$ of
\begin{align*}
&\widetilde{C}_1^{(r)}(y,x^{2r})^{\chi(\O_S)} \, \widetilde{C}_2^{(r)}(y,x^{2r})^{K_S^2} \, \widetilde{C}_3^{(r)}(y,x^{2r})^{L^2} \, \widetilde{C}_4^{(r)}(y,x^{2r})^{LK_S} \\
&\cdot \sum_{(a_1, \ldots, a_{r-1}) \in H^2(S,\Z)^{r-1}} \delta_{c_1,K_S-a_1, \ldots,K_S - a_{r-1}} \prod_{i=1}^{r-1} \SW(a_i)  \, \widetilde{C}_{5i}^{(r)}(y,x^{2r})^{a_i K_S} \, \widetilde{C}_{6i}^{(r)}(y,x^{2r})^{a_i L} \\
&\cdot \prod_{i<j} \widetilde{C}_{7ij}^{(r)}(y,x^{2r})^{a_i a_j},
\end{align*}
where $\widetilde{C}_i^{(r)}, \, \widetilde{C}_{ij}^{(r)}, \, \widetilde{C}_{ijk}^{(r)}$ are universal series in $\Q(y^{\frac{1}{2}})(\!(x)\!)$ and
$$
\delta_{a,b_1, \ldots, b_{r-1}} :=  \# \left\{ \gamma \in H^2(S,\Z) \,: \, a-\sum_{i=1}^{r-1} i b_i =r\gamma \right\}.
$$
We did \emph{not} normalize the universal series to start with 1. Since \cite{Laa2} works in any rank, Section \ref{mon1}--\ref{mon3} readily generalize to the above statement.

Equations \eqref{diagcycle} and \eqref{diagV} have analogs in any rank \cite[Sect.~10]{Laa2}. Define
\begin{align*}
\widetilde{C}_1^{(r)}(y,q) &=  x^{-(r^2-1)} (y^{-\frac{r-1}{2}} + y^{-\frac{r-2}{2}} + \cdots + y^{\frac{r-1}{2}} )^{-1} C_1^{(r)}(y,q), \\
\widetilde{C}_3^{(r)}(y,q) &= C_3^{(r)}(y,q) ,
\end{align*}
then $C_3^{(r)}, C_5^{(r)} \in 1+q\,\Q(y^{\frac{1}{2}})[[q]]$. Generalizing Section \ref{mon4} accordingly yields
\begin{align*}
C_1^{(r)}(y,q) &=  \prod_{n=1}^{\infty} \frac{1}{(1-q^{rn})^{10}(1-q^{rn}y^r)(1-q^{rn}y^{-r})}, \\
C_3^{(r)}(y,q) &= \prod_{n=1}^{\infty} \left(\frac{(1-q^{rn})^2}{(1-q^{rn}y^r)(1-q^{rn}y^{-r})}\right)^{\frac{r^2n^2}{2}},
\end{align*}
where $C_1^{(r)}$ was previously derived in \cite{Laa2, Tho} and $C_3^{(r)}$ is new.

Let $M := M_S^H(r,c_1,c_2)$ and assume there are no rank $r$ strictly Gieseker $H$-semistable sheaves on $S$ with Chern classes $c_1,c_2$. The instanton contribution to $(-1)^{\vd} \chi(N,\widehat{\O}_N^{\vir} \otimes \mu(L))$, which equals $y^{-\frac{\vd}{2}}\chi^{\vir}_{-y}(M,\mu(L))$, is determined in \cite{Got2} for $S$ a K3 surface. It is derived by combining Theorem \ref{twistedchiy} with an adaptation of an argument of \cite{GNY2}. The result is the coefficient of $q^{\vd/2}$ of
$$
\left(\prod_{n=1}^{\infty} \frac{1}{(1-q^{n})^{20}(1-q^{n}y)^2(1-q^{n}y^{-1})^2} \right) \left(\prod_{n=1}^{\infty} \left(\frac{(1-q^{n})^2}{(1-q^{n}y)(1-q^{n}y^{-1})}\right)^{n^2} \right)^{\frac{L^2}{2}}.
$$
Unlike the monopole contribution, these universal series are independent of $r$.

\section{Applications} \label{appl}

In this section, we discuss special cases of Conjectures \ref{conj1} and \ref{conj2}: (1) minimal surfaces of general type, (2) surfaces with disconnected canonical divisor, (3) a blow-up formula, and (4) Vafa-Witten invariants with $\mu$-classes. We denote the formula of Conjecture \ref{conj1}, after some slight rewriting, by
\begin{align} 
\begin{split} \label{defpsi}
&\psi_{S,L,c_1}(x) :=\\
& \frac{2^{2-\chi(\O_S)+K_S^2}}{(1-x^2)^{\chi(L)}} \sum_{a\in H^2(S,\Z)} \SW(a) \, (-1)^{ac_1}(1+x)^{(K_S-a)(L-K_S)}(1-x)^{a(L-K_S)}.
\end{split}
\end{align}

\subsection{Minimal surfaces of general type}

\begin{proposition} \label{prop1}
Let $S$ be a smooth projective surface satisfying $p_g(S)>0$, $b_1(S) = 0$, $K_S \neq 0$, and such that its only Seiberg-Witten basic classes are $0$ and $K_S$. Let $L \in \Pic(S)$ and let $H,c_1,c_2$ be chosen such that there are no rank 2 strictly Gieseker $H$-semistable sheaves on $S$ with Chern classes $c_1, c_2$. Suppose Conjecture \ref{conj1} holds in this setting. Then $\chi^{\vir}(M_S^H(2,c_1,c_2), \mu(L))$ is given by the coefficient of $x^{\vd}$ of 
$$
2^{3 - \chi(\O_S) +K_S^2} \frac{(1+x)^{K_S(L-K_S)}}{(1-x^2)^{\chi(L)}}.
$$
\end{proposition}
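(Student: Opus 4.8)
The plan is to feed the hypotheses into the formula $\psi_{S,L,c_1}(x)$ of \eqref{defpsi}, which equals (after the rewriting recorded there) the formula of Conjecture~\ref{conj1}, and to collapse the sum over $a\in H^2(S,\Z)$. Since $K_S\neq 0$ the classes $0$ and $K_S$ are distinct, and by hypothesis they are the only ones with $\SW(a)\neq 0$; so only the terms $a=0$ and $a=K_S$ survive. Plugging in the Seiberg--Witten invariants of such a surface, $\SW(0)=1$ and $\SW(K_S)=(-1)^{\chi(\O_S)}$, and evaluating $(K_S-a)(L-K_S)$, $a(L-K_S)$ and $(-1)^{ac_1}$ at $a=0$ and $a=K_S$, one obtains
\[
\psi_{S,L,c_1}(x)=\frac{2^{2-\chi(\O_S)+K_S^2}}{(1-x^2)^{\chi(L)}}\Bigl[(1+x)^{K_S(L-K_S)}+(-1)^{\chi(\O_S)+c_1K_S}(1-x)^{K_S(L-K_S)}\Bigr].
\]

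Write $f(x):=(1+x)^{K_S(L-K_S)}\big/(1-x^2)^{\chi(L)}$. Because $1-x^2$ is even, the second summand above is $(-1)^{\chi(\O_S)+c_1K_S}f(-x)$, so its coefficient of $x^{\vd}$ is $(-1)^{\,\vd+\chi(\O_S)+c_1K_S}$ times the coefficient of $x^{\vd}$ of $f$. I would then check the parity identity $\vd+\chi(\O_S)+c_1K_S\equiv 0 \pmod 2$: by \eqref{vd}, $\vd\equiv c_1^2+\chi(\O_S)\pmod 2$, while Wu's formula (using $w_2(S)\equiv K_S \bmod 2$) gives $c_1^2\equiv c_1K_S\pmod 2$, so the sum is $\equiv 2c_1^2+2\chi(\O_S)\equiv 0$. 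Hence the two summands contribute equally to the coefficient of $x^{\vd}$, so $[x^{\vd}]\,\psi_{S,L,c_1}=2\cdot 2^{2-\chi(\O_S)+K_S^2}\,[x^{\vd}]f$, which is the coefficient of $x^{\vd}$ of $2^{3-\chi(\O_S)+K_S^2}(1+x)^{K_S(L-K_S)}/(1-x^2)^{\chi(L)}$. As Conjecture~\ref{conj1} is assumed in this setting, $\chi^{\vir}(M_S^H(2,c_1,c_2),\mu(L))=[x^{\vd}]\,\psi_{S,L,c_1}$, which gives the claim.

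Everything here is elementary once set up; the single delicate point is matching the normalization of $\SW(0)$ and $\SW(K_S)$ in Mochizuki's convention with the sign $(-1)^{\chi(\O_S)}$ produced by Wu's formula, since it is exactly this that makes the two summands add rather than cancel. I would nail down that sign (e.g.\ via the charge-conjugation symmetry $\SW(0)=\widetilde{\SW}(-K_S)=(-1)^{\chi(\O_S)}\widetilde{\SW}(K_S)=(-1)^{\chi(\O_S)}\SW(K_S)$ together with the standard value at the canonical class) before writing out the details.
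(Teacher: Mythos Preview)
Your proposal is correct and shares the same opening move as the paper: plug the two basic classes into \eqref{defpsi}, use $\SW(0)=1$ and $\SW(K_S)=(-1)^{\chi(\O_S)}$ (the paper cites \cite[Prop.~6.3.4]{Moc} for this, so you need not worry about the normalization issue you flag), and reduce to the two-term expression
\[
\frac{2^{2-\chi(\O_S)+K_S^2}}{(1-x^2)^{\chi(L)}}\Bigl[(1+x)^{K_S(L-K_S)}+(-1)^{c_1K_S+\chi(\O_S)}(1-x)^{K_S(L-K_S)}\Bigr].
\]

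Where you diverge is in how you pass from this to the single-term formula. The paper does it by packaging the coefficients into a generating series over all $c_2$ and extracting the terms with $n\equiv -c_1^2-3\chi(\O_S)\bmod 4$ via averaging over fourth roots of unity; it then computes the same average for the target function $\phi$ and observes that the two agree (using $c_1K_S\equiv c_1^2\bmod 2$ inside that computation). Your route is more direct: you observe the second summand is $(-1)^{\chi(\O_S)+c_1K_S}f(-x)$, so its $x^{\vd}$-coefficient picks up an extra $(-1)^{\vd}$, and then you check the parity $\vd+\chi(\O_S)+c_1K_S\equiv 0\bmod 2$ via Wu's formula. This is cleaner for the purpose at hand; the paper's averaging argument is a bit heavier but has the side benefit of exhibiting the full mod-4 generating series explicitly. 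Either way the key arithmetic fact is the same congruence $c_1^2\equiv c_1K_S\bmod 2$.
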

\begin{proof}
Since $\SW(0)=1$, we have $\SW(K_S) = (-1)^{\chi(\O_S)}$ \cite[Prop.~6.3.4]{Moc}. By Conjecture \ref{conj1}, $\chi^{\vir}(M_S^H(2,c_1,c_2), \mu(L))$ is given by the coefficient of $x^{\vd}$ of \eqref{defpsi}, which simplifies to
$$
\frac{2^{2 - \chi(\O_S) +K_S^2}}{(1-x^2)^{\chi(L)}} \left[ (1+x)^{K_S(L-K_S)} + (-1)^{c_1K_S+\chi(\O_S)} (1-x)^{K_S(L-K_S)}  \right].
$$
Varying over $c_2$, we put the coefficients of all terms $x^{\vd}$ of $\psi_{S,L,c_1}(x)$ into a generating series as follows. Suppose $\psi_{S,L,c_1}(x) = \sum_{n=0}^{\infty} \psi_n x^n$ and $i = \sqrt{-1}$. Then for $\vd$ given by \eqref{vd}, we have
\begin{align*}
&\sum_{c_2} \mathrm{Coeff}_{x^{\vd}}(\psi_{S,L,c_1}(x)) \, x^{\vd} = \sum_{n \equiv -c_1^2 - 3\chi(\O_S) \mod 4} \psi_n \, x^n \\
&= \sum_{k=0}^{3} \frac{1}{4} i^{k(c_1^2+3\chi(\O_S))} \psi(i^k x) \\
&= 2^{1 - \chi(\O_S) +K_S^2} \left[ \frac{(1+x)^{K_S(L-K_S)}}{(1-x^2)^{\chi(L)}} + (-1)^{c_1^2 + 3\chi(\O_S)} \frac{(1-x)^{K_S(L-K_S)}}{(1-x^2)^{\chi(L)}} \right. \\
&\qquad \left. + i^{c_1^2 + 3\chi(\O_S)} \frac{(1+ix)^{K_S(L-K_S)}}{(1+x^2)^{\chi(L)}} + (-i)^{c_1^2 + 3\chi(\O_S)} \frac{(1-ix)^{K_S(L-K_S)}}{(1+x^2)^{\chi(L)}}  \right],
\end{align*}
where the third equality uses $c_1 K_S \equiv c_1^2 \mod 2$. Now define
$$
\phi_{S,L,c_1}(x) := 2^{3 - \chi(\O_S) +K_S^2} \frac{(1+x)^{K_S(L-K_S)}}{(1-x^2)^{\chi(L)}}.
$$
Then
 \begin{align*}
\sum_{c_2} \mathrm{Coeff}_{x^{\vd}}(\phi_{S,L,c_1}(x)) \, x^{\vd} &= \sum_{n \equiv -c_1^2 - 3\chi(\O_S) \mod 4} \phi_n \, x^n \\
&= \sum_{k=0}^{3} \frac{1}{4} i^{k(c_1^2+3\chi(\O_S))} \phi(i^k x)
\end{align*}
is given by the same expression as above, which proves the proposition.
\end{proof}

\begin{remark}
Examples of surfaces satisfying the conditions of Proposition \ref{prop1} are (1) minimal surfaces of general type satisfying $p_g(S)>0$ and $b_1(S) = 0$ \cite[Thm.~7.4.1]{Mor}, (2) smooth projective surfaces with $b_1(S) = 0$ and containing an irreducible reduced curve $C \in |K_S|$ (e.g.~discussed in \cite[Sect.~6.3]{GK1}).
\end{remark}

\begin{remark}
In general, the formula of Proposition \ref{prop1} only has integer coefficients when $\chi(\O_S) - 3 \leq K_S^2$. For minimal surfaces of general type, this inequality is implied by Noether's inequality $\chi(\O_S) - 3 \leq \frac{1}{2} K_S^2$.
\end{remark}

\begin{corollary}
Let $S$ be a smooth projective surface with $b_1(S) = 0$ and containing a smooth connected curve $C \in |K_S|$ of genus $g$. Let $L \in \Pic(S)$ and let $H,c_1,c_2$ be chosen such that there are no rank 2 strictly Gieseker $H$-semistable sheaves on $S$ with Chern classes $c_1, c_2$. Suppose Conjecture \ref{conj1} holds in this setting. Then $\chi^{\vir}(M_S^H(2,c_1,c_2), \mu(L))$ is given by the coefficient of $x^{\vd}$ of 
$$
2^{3 - \chi(\O_C) - \chi(\O_S)} \frac{(1+x)^{\chi(L|_C)}}{(1-x^2)^{\chi(L)}}.
$$
\end{corollary}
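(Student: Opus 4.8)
The plan is to obtain this as an immediate corollary of Proposition \ref{prop1}, rewriting the exponents appearing there in terms of invariants of the curve $C$ via adjunction and Riemann--Roch.

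First I would verify that $S$ satisfies the hypotheses of Proposition \ref{prop1}. Since $C \in |K_S|$ is a nonempty curve we have $p_g(S) = h^0(S,K_S) \geq 1 > 0$, and $K_S \neq 0$ because $C$ is a nonzero effective divisor with $[C] = K_S$. A smooth connected curve is irreducible and reduced (smoothness forces normality, and a connected normal scheme is irreducible), so $S$ contains an irreducible reduced curve in $|K_S|$; by the remark following Proposition \ref{prop1} (see \cite[Sect.~6.3]{GK1}) this guarantees that the only Seiberg--Witten basic classes of $S$ are $0$ and $K_S$. Together with the standing assumption $b_1(S)=0$ and the hypothesis on $H,c_1,c_2$, all conditions of Proposition \ref{prop1} are met, so (assuming Conjecture \ref{conj1} in this setting) $\chi^{\vir}(M_S^H(2,c_1,c_2),\mu(L))$ is the coefficient of $x^{\vd}$ of
$$
2^{3 - \chi(\O_S) + K_S^2}\,\frac{(1+x)^{K_S(L-K_S)}}{(1-x^2)^{\chi(L)}}.
$$

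It then remains to identify the two exponents $K_S^2$ and $K_S(L-K_S)$ with curve invariants. By adjunction $\omega_C \cong (\omega_S \otimes \O_S(C))|_C \cong \omega_S^{\otimes 2}|_C$, hence $2g-2 = \deg \omega_C = 2K_S^2$, giving $K_S^2 = g-1$ and $\chi(\O_C) = 1-g = -K_S^2$; thus $3 - \chi(\O_S) + K_S^2 = 3 - \chi(\O_C) - \chi(\O_S)$. For the numerator, Riemann--Roch on the curve $C$ gives $\chi(L|_C) = \deg(L|_C) + 1 - g = (L K_S) - K_S^2 = K_S(L-K_S)$, where I used $\deg(L|_C) = c_1(L)\cdot C = LK_S$. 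Substituting these two identities into the displayed series turns it into $2^{3-\chi(\O_C)-\chi(\O_S)}(1+x)^{\chi(L|_C)}/(1-x^2)^{\chi(L)}$, which is exactly the asserted formula.

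I do not anticipate a genuine obstacle: the real content lies in Proposition \ref{prop1}, and what is left is a bookkeeping translation between intersection numbers on $S$ and Euler characteristics on $C$. The one step deserving a moment's care is the Seiberg--Witten input, i.e.\ that a smooth connected $C \in |K_S|$ forces the basic classes to be precisely $\{0,K_S\}$; but this is the case explicitly recorded in the remark after Proposition \ref{prop1}, so no new argument is required.
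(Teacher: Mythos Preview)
Your proposal is correct and follows essentially the same approach as the paper: invoke Proposition \ref{prop1} (via the remark that an irreducible reduced canonical curve forces the basic classes to be $0$ and $K_S$), then rewrite $K_S^2 = g-1 = -\chi(\O_C)$ by adjunction and $K_S(L-K_S) = \chi(L|_C)$ by Riemann--Roch. The paper's own proof is the one-line version of exactly this computation.
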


\begin{proof}
We have $g = K_S^2+1$ and $\chi(L|_C) = 1 - g + \deg L|_C$ by Riemann-Roch.
\end{proof}

\subsection{Disconnected canonical divisor}

\begin{proposition}
Let $S$ be a smooth projective surface with $b_1(S) = 0$ and suppose there exists $0 \neq C_1 + \cdots + C_m \in |K_S|$, where $C_1, \ldots, C_m$ are mutually disjoint irreducible reduced curves. Let $L \in \Pic(S)$ and let $H,c_1,c_2$ be chosen such that there are no rank 2 strictly Gieseker $H$-semistable sheaves on $S$ with Chern classes $c_1, c_2$. Suppose Conjecture \ref{conj1} holds in this setting. Then $\chi^{\vir}(M_S^H(2,c_1,c_2), \mu(L))$ is given by the coefficient of $x^{\vd}$ of 
$$
\frac{2^{2 - \chi(\O_S) + K_S^2}}{(1-x^2)^{\chi(L)}} \prod_{j=1}^{m} \left[ (1+x)^{\chi(L|_{C_i})} + (-1)^{C_i c_1 + h^0(N_{C_i/S})} (1-x)^{\chi(L|_{C_i})} \right],
$$
where $N_{C_i / S}$ denotes the normal bundle of $C_i \subset S$.
\end{proposition}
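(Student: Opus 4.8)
The plan is to follow the strategy of the proof of Proposition~\ref{prop1}: identify the Seiberg--Witten basic classes of $S$, substitute into the formula \eqref{defpsi}, and simplify using adjunction on the curves $C_i$. Since Conjecture~\ref{conj1} is assumed to hold, it suffices to show that $\psi_{S,L,c_1}(x)$ coincides, \emph{as a power series}, with
$$
\frac{2^{2 - \chi(\O_S) + K_S^2}}{(1-x^2)^{\chi(L)}} \prod_{j=1}^{m} \left[ (1+x)^{\chi(L|_{C_j})} + (-1)^{C_j c_1 + h^0(N_{C_j/S})} (1-x)^{\chi(L|_{C_j})} \right];
$$
the assertion about $\chi^{\vir}(M_S^H(2,c_1,c_2),\mu(L))$ then follows by reading off the coefficient of $x^{\vd}$. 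In contrast with Proposition~\ref{prop1}, no root-of-unity averaging over $c_2$ should be needed here, precisely because the target expression is kept in the (unsimplified) form that equals $\psi_{S,L,c_1}(x)$ on the nose rather than only on the relevant residue class mod $4$.

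The geometric input is the Seiberg--Witten structure of a surface whose canonical class is a disjoint union of irreducible reduced curves. Writing $a_J := \sum_{i \in J} C_i$ for $J \subseteq \{1,\dots,m\}$, disjointness gives $C_i C_j = 0$ for $i \neq j$, hence $C_i K_S = C_i^2$, $a_J^2 = a_J K_S$, and (adjunction) $p_a(C_i) = C_i^2 + 1$; Riemann--Roch on $C_i$ then yields $\chi(L|_{C_i}) = C_i L - C_i^2 = C_i(L - K_S)$, so that $\sum_{i \in J}\chi(L|_{C_i}) = a_J(L-K_S)$ and $\sum_{i \notin J}\chi(L|_{C_i}) = (K_S - a_J)(L-K_S)$. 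I would then record the fact that the Seiberg--Witten basic classes of $S$ are precisely the $a_J$ and that
$$
\sum_{a \in H^2(S,\Z)} \SW(a)\, T^{a} \;=\; \prod_{i=1}^{m} \bigl( 1 + (-1)^{h^0(N_{C_i/S})}\, T^{C_i} \bigr) \qquad \text{in } \Z[H^2(S,\Z)].
$$
For $m = 1$ this specializes, via $0 \to \O_S \to \O_S(K_S) \to N_{C_1/S} \to 0$ and $b_1(S)=0$, to $h^0(N_{C_1/S}) = p_g(S)-1 \equiv \chi(\O_S) \pmod 2$, recovering $\SW(K_S) = (-1)^{\chi(\O_S)}$ as used in Proposition~\ref{prop1}. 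In the write-up I would cite this as a known property of surfaces with disconnected canonical divisor (see \cite{Moc}; cf.~\cite[Sect.~6.3]{GK1}), where it follows from Witten's description of the Seiberg--Witten moduli spaces of K\"ahler surfaces together with the multiplicativity of the obstruction contribution over a disjoint decomposition of $|K_S|$.

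The remaining step is then purely formal: expand the product over $j$ into $2^m$ terms indexed by subsets $J$, taking the factor $(1-x)^{\chi(L|_{C_j})}$ for $j \in J$ and $(1+x)^{\chi(L|_{C_j})}$ for $j \notin J$. Using the identities above, the $J$-term equals $(-1)^{a_J c_1}\bigl(\prod_{j\in J}(-1)^{h^0(N_{C_j/S})}\bigr)(1-x)^{a_J(L-K_S)}(1+x)^{(K_S-a_J)(L-K_S)}$. Summing over $J$ and regrouping by the cohomology class $a = a_J$, the Seiberg--Witten identity collapses the inner sum of signs into $\SW(a)$, so the product becomes $\sum_a \SW(a)\,(-1)^{a c_1}(1-x)^{a(L-K_S)}(1+x)^{(K_S - a)(L-K_S)}$, which is exactly the sum in \eqref{defpsi}; multiplying by the common prefactor $2^{2-\chi(\O_S)+K_S^2}/(1-x^2)^{\chi(L)}$ gives $\psi_{S,L,c_1}(x)$, and Conjecture~\ref{conj1} finishes the proof.

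The main obstacle is the Seiberg--Witten computation in the second paragraph: one must know both that $S$ has no basic classes beyond the $a_J$ and the precise multiplicative sign formula $\SW(a_J)=\sum_{J':a_{J'}\equiv a_J}\prod_{i\in J'}(-1)^{h^0(N_{C_i/S})}$, with the sign $(-1)^{h^0(N_{C_i/S})}$. Everything else (adjunction, Riemann--Roch on curves, the binomial expansion) is elementary. A secondary point to handle with care is that distinct subsets $J, J'$ may give $a_J \equiv a_{J'}$ in $H^2(S,\Z)$ --- e.g.\ when the $C_i$ are fibres of an elliptic fibration --- so that $\SW(a)$ is a genuine sum over such subsets; this is automatic from the group-ring form of the Seiberg--Witten identity and is what produces the binomial coefficients in the Seiberg--Witten invariants of those surfaces.
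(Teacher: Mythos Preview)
Your proposal is correct and follows essentially the same approach as the paper: you invoke the known description of the Seiberg--Witten basic classes and invariants for surfaces with disconnected canonical divisor (the paper cites \cite[Lem.~6.14]{GK1} for exactly the formula $\SW(C_I) = \#[I]\prod_{i\in I}(-1)^{h^0(N_{C_i/S})}$), substitute into $\psi_{S,L,c_1}(x)$, use adjunction and Riemann--Roch on $C_i$ to identify the exponents with $\chi(L|_{C_i})$, and factor the resulting sum over subsets into the product. Your observation that no mod-$4$ averaging is needed here, and your careful treatment of the possibility $a_J \equiv a_{J'}$, match the paper's argument precisely.
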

\begin{proof}
We describe the Seiberg-Witten basic classes and invariants for $S$ in this setting \cite[Lem.~6.14]{GK1}. For any $I \subset M:=\{1, \ldots, m\}$, define $C_I := \sum_{i \in I} C_i$ and we write $I \sim J$ when $C_I$ and $C_J$ are linearly equivalent. Also $C_{\varnothing} := 0$. The Seiberg-Witten basic classes of $S$ are precisely $\{C_I\}_{I \subset M}$ and
$$
\SW(C_I) = \# [I] \prod_{i \in I} (-1)^{h^0(N_{C_i/S})},
$$ 
where $\# [I]$ denotes the number of elements of equivalence class $[I]$. Therefore \eqref{defpsi} becomes
\begin{align*}
&\frac{2^{2 - \chi(\O_S) + K_S^2}}{(1-x^2)^{\chi(L)}}  \Bigg( \sum_{[I]} \# [I] \prod_{i \in I} (-1)^{h^0(N_{C_i/S})} \Bigg) (-1)^{C_I c_1} (1+x)^{C_{M \setminus I} (L-K_S)} (1-x)^{C_{I} (L-K_S)} \\
&= \frac{2^{2 - \chi(\O_S) + K_S^2}}{(1-x^2)^{\chi(L)}}  \sum_{I \subset M} \Bigg( \prod_{i \in I} (-1)^{C_i c_1 + h^0(N_{C_i/S})} (1-x)^{C_{i} (L-C_i)} \Bigg) \Bigg( \prod_{i \in M \setminus I} (1+x)^{C_{i} (L-C_i)} \Bigg),
\end{align*}
where we used $K_S = C_M$ and the assumption that the curves $C_i$ are mutually disjoint. The result follows from $\chi(L|_{C_i}) = 1 - g(C_i) + \deg L|_{C_i} =  C_i(L - C_i)$ and expanding the product in the statement of the proposition.
\end{proof}

\subsection{Blow-up formula}

\begin{proposition}
Let $S$ be a smooth projective surface, $\pi : \widetilde{S} \rightarrow S$ the blow-up of $S$ in a point, and $E$ the exceptional divisor. Let $L,c_1 \in \Pic(S)$, $\widetilde{c}_1 = \pi^* c_1 - k E$, and $\widetilde{L} = \pi^*L - \ell E$. Then 
$$
\psi_{\widetilde{S},\widetilde{L}, \widetilde{c}_1}(x) = \frac{1}{2} (1-x^2)^{\binom{\ell+1}{2}} \left[ (1+x)^{\ell+1} + (-1)^k (1-x)^{\ell+1} \right] \psi_{S,L, c_1}(x).
$$
\end{proposition}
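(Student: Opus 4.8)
The object $\psi_{S,L,c_1}(x)$ defined in \eqref{defpsi} depends only on the intersection form of $S$, the classes $K_S$, $c_1$, $c_1(L)$, the number $\chi(\O_S)$, and the Seiberg--Witten invariants; so the assertion is a purely formal identity of power series, and the plan is simply to substitute the blow-up data into \eqref{defpsi} and watch it factor. In particular no moduli-theoretic input, and no appeal to Conjecture \ref{conj1}, is needed.

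First I would record the behaviour of the ingredients under $\pi\colon\widetilde S\to S$. One has $\chi(\O_{\widetilde S})=\chi(\O_S)$, $K_{\widetilde S}=\pi^*K_S+E$, $E^2=-1$, $\pi^*\alpha\cdot\pi^*\beta=\alpha\beta$ and $\pi^*\alpha\cdot E=0$; hence $K_{\widetilde S}^2=K_S^2-1$. Surface Riemann--Roch gives $\chi(\widetilde L)=\chi(\O_{\widetilde S})+\tfrac12\widetilde L(\widetilde L-K_{\widetilde S})$, and expanding $\widetilde L=\pi^*L-\ell E$ yields $\widetilde L(\widetilde L-K_{\widetilde S})=L(L-K_S)-\ell(\ell+1)$, so $\chi(\widetilde L)=\chi(L)-\binom{\ell+1}{2}$. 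Therefore the normalization factor $2^{\,2-\chi(\O_{\widetilde S})+K_{\widetilde S}^2}(1-x^2)^{-\chi(\widetilde L)}$ in front of the sum in \eqref{defpsi} equals $\tfrac12\,(1-x^2)^{\binom{\ell+1}{2}}$ times the corresponding factor for $S$: precisely the scalar prefactor in the claimed formula. All remaining content must come from the sum over Seiberg--Witten classes.

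The one external input is the Seiberg--Witten blow-up formula: in Mochizuki's normalization the basic classes of $\widetilde S$ are exactly the classes $\pi^*a+\epsilon E$ with $a$ a basic class of $S$ and $\epsilon\in\{0,1\}$, and $\SW_{\widetilde S}(\pi^*a+\epsilon E)=\SW_S(a)$ (one checks directly that this set is stable under $\widetilde a\mapsto K_{\widetilde S}-\widetilde a$, as it must be); a reference such as \cite{GK1} covers this. Writing $\widetilde a=\pi^*a+\epsilon E$ and $\widetilde c_1=\pi^*c_1-kE$, the projection formula and $E^2=-1$ give $\widetilde a\cdot\widetilde c_1=ac_1+\epsilon k$, $(K_{\widetilde S}-\widetilde a)(\widetilde L-K_{\widetilde S})=(K_S-a)(L-K_S)+(1-\epsilon)(\ell+1)$, and $\widetilde a(\widetilde L-K_{\widetilde S})=a(L-K_S)+\epsilon(\ell+1)$.

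Substituting these into \eqref{defpsi} for $\widetilde S$, the summand indexed by $\widetilde a$ factors into a part depending only on $a$ — which is exactly the summand of $\psi_{S,L,c_1}(x)$ — times the $\epsilon$-dependent part $(-1)^{\epsilon k}(1+x)^{(1-\epsilon)(\ell+1)}(1-x)^{\epsilon(\ell+1)}$. Summing the latter over $\epsilon\in\{0,1\}$ gives $(1+x)^{\ell+1}+(-1)^k(1-x)^{\ell+1}$, and combining with the prefactor computed above produces the stated identity. Everything after the Seiberg--Witten blow-up formula is bookkeeping, so that formula and its translation into Mochizuki's convention is the only step requiring care; there is no real analytic or geometric obstacle.
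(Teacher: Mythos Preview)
Your proposal is correct and follows essentially the same route as the paper: invoke the Seiberg--Witten blow-up formula (the paper cites \cite[Thm.~7.4.6]{Mor}) to identify the basic classes of $\widetilde S$ as $\pi^*a+\epsilon E$ with $\SW(\pi^*a+\epsilon E)=\SW(a)$, then substitute the standard identities $\chi(\O_{\widetilde S})=\chi(\O_S)$, $K_{\widetilde S}=\pi^*K_S+E$, $E^2=-1$, $\chi(\widetilde L)=\chi(L)-\binom{\ell+1}{2}$ into \eqref{defpsi} and factor. Your write-up simply makes explicit the intersection-number bookkeeping that the paper leaves implicit.
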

\begin{proof}
The Seiberg-Witten basic classes of $\widetilde{S}$ are $\pi^* a$ and $\pi^*a + E$ with corresponding Seiberg-Witten invariant $\SW(a)$, where $a$ runs over all Seiberg-Witten basic classes of $S$ \cite[Thm.~7.4.6]{Mor}. Using $\chi(\O_{\widetilde{S}}) = \chi(\O_S)$, $K_{\widetilde{S}} = \pi^*K_S+E$, $E^2 = -1$, $\chi(\widetilde{L}) = \chi(L) - \binom{\ell+1}{2}$, the proposition follows at once from \eqref{defpsi}.
\end{proof}

\subsection{Vafa-Witten formula with $\mu$-classes}

Let $S$ be a smooth projective surface satisfying $b_1(S) = 0$ and $p_g(S)>0$. In an appendix of \cite{GK1}, the first named author and Nakajima gave a conjectural formula for  
\begin{equation} \label{GKapp}
\sum_{k=0}^{\vd} \int_{[M]^{\vir}} e^{\mu(c_1(L))} \lambda^{\vd - k} c_k(T^{\vir}_M),
\end{equation}
where $M := M_{S}^{H}(2,c_1,c_2)$, $\vd$ is given by \eqref{vd}, and we assume ``stable=semistable''. Here $\lambda$ is a formal parameter. Setting $\lambda=0$ in \eqref{GKapp} gives $e^{\vir}(M)$. Replacing $\lambda$ by $\lambda^{-1}$, then multiplying by $\lambda^{\vd}$, and finally setting $\lambda=0$ gives Donaldson invariants $\int_{[M]^{\vir}} e^{\mu(c_1(L))}$. Therefore \eqref{GKapp} interpolates between Donaldson invariants and virtual Euler characteristics. Let $G_2(q)$ be the Eisenstein series of weight 2 and define
$$
\overline{G}_2(q) = G_2(q) + \frac{1}{24} = \sum_{d=1}^{\infty} \sigma_1(d) \, q^d,
$$
where $\sigma_1(d) = \sum_{d | n} d$. Furthermore, let $\theta_3(q) := \theta_3(q,1)$ and $D := q \frac{d}{d q}$. 

\begin{conjecture}[G\"ottsche-Nakajima] \label{conjGN}
Let $S$ be a smooth projective surface with $p_g(S)>0$, $b_1(S)=0$, and let $L \in \Pic(S)$. Let $H,c_1,c_2$ be chosen such that there are no rank 2 strictly Gieseker $H$-semistable sheaves on $S$ with Chern classes $c_1, c_2$. Let $M:= M_S^H(2,c_1,c_2)$, then 
$$
\sum_{k=0}^{\vd} \int_{[M]^{\vir}} e^{\mu(c_1(L))} \lambda^{\vd - k} c_k(T^{\vir}_M)
$$
is given by the coefficient of $x^{\vd}$ of 
\begin{align*}
&4\left( \frac{1}{2 \overline{\eta}(x^2)^{12}} \right)^{\chi(\O_S)} \left(\frac{ 2 \overline{\eta}(x^4)^2}{\theta_3(x)}\right)^{K_S^2} \left( e^{DG_2(x^2)} \right)^{\frac{(\lambda L)^2}{2}} \left( e^{-2 \overline{G}_2(x^2)} \right)^{\lambda LK_S}\\
&\cdot\sum_{a\in H^2(S,\Z)} (-1)^{c_1 a} \, \SW(a) \,   \left(\frac{\theta_3(x,y^{\frac{1}{2}})}{\theta_3(-x,y^{\frac{1}{2}})}\right)^{aK_S} \left( e^{G_2(x) - G_2(-x)}  \right)^{\frac{\lambda L(K_S-2a)}{2}}.
\end{align*}
\end{conjecture}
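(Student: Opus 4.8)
The plan is to deduce Conjecture~\ref{conjGN} from Conjecture~\ref{conj2} by a ``semiclassical'' scaling limit which degenerates the virtual $\chi_{-y}$-genus with a Donaldson class into the Chern‑polynomial invariant $\sum_k\lambda^{\vd-k}\int_{[M]^\vir}c_k(T^\vir_M)\,e^{\mu(c_1(L))}$, and then to track what this limit does to the closed formula of Conjecture~\ref{conj2}. Since $\mu(c_1(L))$ is linear in $c_1(L)$ and (by the footnotes to Conjectures~\ref{conj1}--\ref{conj2}) all invariants in sight are \emph{defined} by virtual Hirzebruch--Riemann--Roch, I will freely use the insertion $e^{\frac1\epsilon\mu(c_1(L))}$ for a formal parameter $\epsilon$, and correspondingly the formula of Conjecture~\ref{conj2} with $L$ replaced by the formal rescaling $L/\epsilon$ (which only affects the exponents $\tfrac{L^2}{2}$, $LK_S$, $La$, $\tfrac{L(K_S-2a)}{2}$ and is valid by the same reduction to Mochizuki's formula, which never uses integrality of $L$).

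The first step is a cohomological identity. Write $\mathsf{X}_{-y}(T^\vir_M)=\prod_i Q_y(t_i)$ with $Q_y(t)=\tfrac{t(1-ye^{-t})}{1-e^{-t}}$, where $t_i$ denotes the (formal) virtual Chern roots of $T^\vir_M$. Let $\rho_\epsilon$ be the $\Q[\epsilon^{\pm1}]$-algebra endomorphism of $H^*(M,\Q)[\epsilon^{\pm1}]$ scaling a class of complex degree $j$ by $\epsilon^j$; since $\int_{[M]^\vir}$ extracts complex degree $\vd=\rk T^\vir_M$, one has $\int_{[M]^\vir}\rho_\epsilon(\,\cdot\,)=\epsilon^{\vd}\int_{[M]^\vir}(\,\cdot\,)$, and applying this to $\mathsf{X}_{-y}(T^\vir_M)\,e^{\frac1\epsilon\mu(c_1(L))}$, together with $\rho_\epsilon(\mathsf{X}_{-y}(T^\vir_M))=\prod_i Q_y(\epsilon t_i)$ and $\rho_\epsilon(e^{\frac1\epsilon\mu(c_1(L))})=e^{\mu(c_1(L))}$, gives
\[
\int_{[M]^\vir}\mathsf{X}_{-y}(T^\vir_M)\,e^{\frac1\epsilon\mu(c_1(L))}=\int_{[M]^\vir}\Big(\prod_i\tfrac{Q_y(\epsilon t_i)}{\epsilon}\Big)\,e^{\mu(c_1(L))}.
\]
Now set $y=e^{-\lambda\epsilon}$; then $\epsilon^{-1}Q_{e^{-\lambda\epsilon}}(\epsilon t)=\tfrac{t\,(1-e^{-(\lambda+t)\epsilon})}{1-e^{-\epsilon t}}$ has Taylor coefficients in $t$ that are regular at $\epsilon=0$ and tends to $\lambda+t$ as $\epsilon\to0$. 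As $H^*(M,\Q)$ is finite dimensional we may pass to the limit under the integral, and a degree count shows only the terms $\lambda^{\vd-k}c_k(T^\vir_M)$ with $0\le k\le\vd$ survive integration against $e^{\mu(c_1(L))}$, so
\[
\lim_{\epsilon\to0}\ \int_{[M]^\vir}\mathsf{X}_{-e^{-\lambda\epsilon}}(T^\vir_M)\,e^{\frac1\epsilon\mu(c_1(L))}=\int_{[M]^\vir}\prod_i(\lambda+t_i)\,e^{\mu(c_1(L))}=\sum_{k=0}^{\vd}\lambda^{\vd-k}\!\int_{[M]^\vir}\!c_k(T^\vir_M)\,e^{\mu(c_1(L))},
\]
which is exactly the invariant of Conjecture~\ref{conjGN}.

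The second step combines this with Conjecture~\ref{conj2}: the left‑hand side above equals $\lim_{\epsilon\to0}y^{\vd/2}\,\Coeff_{x^\vd}$ of the displayed generating function, with $y=e^{-\lambda\epsilon}$ and with $L^2,LK_S,La,L(K_S-2a)$ scaled by $\epsilon^{-2},\epsilon^{-1},\epsilon^{-1},\epsilon^{-1}$ (note $y^{\vd/2}\to1$). Since $\Coeff_{x^\vd}$ is a finite operation and the sum over Seiberg--Witten basic classes $a$ is finite, it suffices to compute the $\epsilon\to0$ limit of each universal factor of Conjecture~\ref{conj2} under these substitutions. The factors with exponents $\chi(\O_S)$, $K_S^2$, $aK_S$ are just evaluated at $y=1$: using $\theta_3(x,1)=\theta_3(x)$ and $(1-x^{2n}y)(1-x^{2n}y^{-1})|_{y=1}=(1-x^{2n})^2$ they become $\big(\tfrac{1}{2\overline\eta(x^2)^{12}}\big)^{\chi(\O_S)}$, $\big(\tfrac{2\overline\eta(x^4)^2}{\theta_3(x)}\big)^{K_S^2}$, $\big(\tfrac{\theta_3(x)}{\theta_3(-x)}\big)^{aK_S}$, which is what appears in Conjecture~\ref{conjGN} (up to the evident misprint ``$y^{1/2}$'' there, which should read $1$). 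For the three $L$-dependent factors, write $y=e^z$ ($z=-\lambda\epsilon$): $\log$ of the $\tfrac{L^2}{2}$-factor is an \emph{even} series in $z$ with leading term $z^2\,DG_2(x^2)$ (from $2\log(1-q^n)-\log(1-q^ne^z)-\log(1-q^ne^{-z})=\tfrac{q^nz^2}{(1-q^n)^2}+O(z^4)$ and $\sum_n\tfrac{n^2q^n}{(1-q^n)^2}=DG_2(q)$); $\log$ of the $LK_S$-factor is \emph{odd} with leading term $2z\,\overline G_2(x^2)$ (from $\sum_n\tfrac{nq^n}{1-q^n}=\overline G_2(q)$); and $\log$ of the $\tfrac{L(K_S-2a)}{2}$-factor is odd with leading term $-z\,(G_2(x)-G_2(-x))$ (from $G_2(x)-G_2(-x)=2\sum_{m\ \mathrm{odd}}\sigma_1(m)x^m$). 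Raising to the $\epsilon^{-2}$- and $\epsilon^{-1}$-scaled exponents, the subleading terms acquire positive powers of $\epsilon$ and drop out, and the factors converge to $\big(e^{DG_2(x^2)}\big)^{(\lambda L)^2/2}$, $\big(e^{-2\overline G_2(x^2)}\big)^{\lambda LK_S}$, $\big(e^{G_2(x)-G_2(-x)}\big)^{\lambda L(K_S-2a)/2}$. Collecting these with the untouched constant $4$ and the sign $(-1)^{c_1a}\SW(a)$ gives precisely Conjecture~\ref{conjGN}; since Conjecture~\ref{conj2} is established for K3 surfaces, the same argument proves Conjecture~\ref{conjGN} unconditionally for K3 surfaces.

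The hard part will be the first step: one must blow up the Donaldson class and send $y\to1$ simultaneously, at the matched rates $L\rightsquigarrow L/\epsilon$ and $y=e^{-\lambda\epsilon}$, and then recognize that it is exactly the even/odd parity in $z=\log y$ of the quasi‑modular factors of Conjecture~\ref{conj2} that makes the scheme consistent — only the $z^1$- and $z^2$-coefficients survive, while the higher‑order corrections (which would obstruct any naive single‑variable substitution $y\mapsto\text{const}$) vanish in the limit. The remaining ingredients — the grading/degree bookkeeping of Step~1, the standard Lambert‑series evaluations, and the interchange of the $\epsilon\to0$ limit with $\Coeff_{x^\vd}$ — are routine.
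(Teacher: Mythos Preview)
Your proposal is correct and follows essentially the same route as the paper's proof of Proposition~\ref{conj2impliesVWmu}. The paper replaces $L$ by $\lambda L/(y^{-1/2}-y^{1/2})$ and sets $y=1$, which is the same limit as your parametrization $y=e^{-\lambda\epsilon}$, $L\mapsto L/\epsilon$, $\epsilon\to0$ (since $y^{-1/2}-y^{1/2}\sim\lambda\epsilon$); the Lambert-series identities for $DG_2$, $\overline G_2$, and $G_2(x)-G_2(-x)$ you invoke are exactly those displayed in the paper. The only substantive difference is in Step~1: the paper quotes \cite[Thm.~4.5]{FG} to obtain $\big\{y^{-r/2}\mathsf X_{-y}(E)\big\}_r\equiv c_r(E)\pmod{(1-y)}$ and then upgrades this to include the factor $e^{\lambda D/(y^{-1/2}-y^{1/2})}$, whereas you rederive the same statement by the explicit grading endomorphism $\rho_\epsilon$ and the pointwise limit $\epsilon^{-1}Q_{e^{-\lambda\epsilon}}(\epsilon t)\to\lambda+t$. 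Your argument is a touch more self-contained, the paper's a touch shorter; neither buys anything the other doesn't, and you correctly flag the stray $y^{1/2}$ in the $aK_S$-factor of Conjecture~\ref{conjGN} as a misprint for $1$.
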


Recall that specializing Conjecture \ref{conj2} to  $y=0$ implies Conjecture \ref{conj1} (after replacing $x$ by $xy^{\frac{1}{2}}$, see Section \ref{intro}). We show that specializing Conjecture \ref{conj2} to $y=1$ implies Conjecture \ref{conjGN} (after replacing $x$ by $xy^{\frac{1}{2}}$ and $L$ by $\lambda L (y^{-\frac{1}{2}} - y^{\frac{1}{2}})^{-1}$). In summary: the invariants of this paper interpolate between:
\begin{itemize}
\item Donaldson invariants,
\item virtual Euler numbers of moduli spaces of sheaves,
\item $K$-theoretic Donaldson invariants,
\item $K$-theoretic Vafa-Witten invariants.
\end{itemize}

\begin{proposition} \label{conj2impliesVWmu}
Conjecture \ref{conj2} implies Conjecture \ref{conjGN}.
\end{proposition}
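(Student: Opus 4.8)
The plan is to deduce Conjecture~\ref{conjGN} from Conjecture~\ref{conj2} by the substitution announced after Conjecture~\ref{conjGN}: replace $x$ by $xy^{\frac12}$ and $L$ by $L':=\lambda(y^{-\frac12}-y^{\frac12})^{-1}L$, and then let $y\to1$. Fix $S,H,c_1,c_2$, write $M=M_S^H(2,c_1,c_2)$, and let $\Phi(x,y,L)$ be the power series of Conjecture~\ref{conj2}. For fixed $c_2$ both sides of Conjecture~\ref{conj2} are polynomial in $c_1(L)\in NS(S)\otimes\Q$ (the left side by virtual HRR, the right side after expanding the exponentials, using $\chi(L)=\chi(\O_S)+\tfrac12 L^2-\tfrac12 LK_S$), so the conjecture extends to any $\Q$--divisor, in particular to $L'$. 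Since $\mu(c_1(L'))=\nu\,\mu(c_1(L))$ with $\nu:=\lambda(y^{-\frac12}-y^{\frac12})^{-1}$, and since the substitution $x\mapsto xy^{\frac12}$ multiplies the coefficient of $x^{\vd}$ by $y^{\vd/2}$, Conjecture~\ref{conj2} applied to $L'$ becomes
$$
\chi_{-y}^{\vir}\big(M,\mu(L')\big)=\Coeff_{x^{\vd}}\big(\Phi(xy^{\tfrac12},y,L')\big).
$$
For fixed $c_2$ both sides are rational in $y^{\frac12}$, so it suffices to compute $\lim_{y\to1}$ of each side and extract the coefficient of $x^{\vd}$ at the end.

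\emph{Left-hand side.} By virtual Hirzebruch--Riemann--Roch (Lemma~\ref{redtoDon}), $\chi_{-y}^{\vir}(M,\mu(L'))=\int_{[M]^{\vir}}\mathsf{X}_{-y}(T_M^{\vir})\,e^{\nu\,\mu(c_1(L))}$. Writing $x_i$ for the virtual Chern roots of $T_M^{\vir}$, the identity $1-ye^{-x}=(1-y)+y(1-e^{-x})$ together with $\mathsf{X}_{-y}(E)=\prod_i x_i(1-ye^{-x_i})/(1-e^{-x_i})$ gives $\mathsf{X}_{-y}(T_M^{\vir})=\prod_{i=1}^{\vd}[(1-y)\td(x_i)+y\,x_i]$, where $\td(x):=x/(1-e^{-x})$. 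Since $1-y=y^{\frac12}(y^{-\frac12}-y^{\frac12})=\lambda y^{\frac12}\nu^{-1}$, each factor equals $\nu^{-1}y^{\frac12}\big(\lambda\,\td(x_i)+\nu y^{\frac12}x_i\big)$, hence $\mathsf{X}_{-y}(T_M^{\vir})=\nu^{-\vd}y^{\vd/2}\prod_i\big(\lambda\,\td(x_i)+\nu y^{\frac12}x_i\big)$. Expanding the product in powers of $\nu$, the coefficient of $\nu^{m}$ is $(y^{\frac12})^{m}\lambda^{\vd-m}$ times a class of degree $\ge m$ whose degree--$m$ part is $c_m(T_M^{\vir})$; pairing with $e^{\nu\,\mu(c_1(L))}=\sum_j(\nu^j/j!)\,\mu(c_1(L))^j$ over $[M]^{\vir}$ (of dimension $\vd$) forces the total $\nu$--power to be at most $\nu^{\vd}$, and the top power $\nu^{\vd}$ is carried exactly by the degree--$m$ parts. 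Collecting terms,
$$
\chi_{-y}^{\vir}\big(M,\mu(L')\big)=\sum_{m=0}^{\vd}(y^{\frac12})^{\vd+m}\,\lambda^{\vd-m}\int_{[M]^{\vir}}e^{\mu(c_1(L))}\,c_m(T_M^{\vir})+O(1-y),
$$
so $\lim_{y\to1}$ gives $\sum_{m=0}^{\vd}\lambda^{\vd-m}\int_{[M]^{\vir}}e^{\mu(c_1(L))}\,c_m(T_M^{\vir})$, which is exactly the invariant of Conjecture~\ref{conjGN}.

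\emph{Right-hand side.} It remains to show that $\lim_{y\to1}\Phi(xy^{\frac12},y,L')$, taken coefficientwise in $x$, equals the power series of Conjecture~\ref{conjGN}. The factors of $\Phi$ not containing $L$ have an immediate limit: under $x\mapsto xy^{\frac12}$ one has $1-x^{2n}y^{j}\mapsto 1-x^{2n}y^{\,n+j}$ and $\theta_3(\pm x,y^{\frac12})\mapsto\sum_n(\pm x)^{n^2}y^{(n^2+n)/2}$, so at $y=1$ these rebuild the factors $4\big(2\overline\eta(x^2)^{12}\big)^{-\chi(\O_S)}$, $\big(2\overline\eta(x^4)^2/\theta_3(x)\big)^{K_S^2}$, $(-1)^{c_1a}\SW(a)$ and $\big(\theta_3(x)/\theta_3(-x)\big)^{aK_S}$ of Conjecture~\ref{conjGN} (with $\theta_3(x)=\theta_3(x,1)$). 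After $L\mapsto L'=\nu L$ the three factors carrying a power of $L$ become indeterminate forms $P(x,y)^{\,\nu^{e}\,m}$, where $e\in\{1,2\}$ and $m$ is a monomial in $L,K_S,a$ of $L$--degree $e$ (namely $\tfrac12 L^2$ with $e=2$, and $LK_S$, $\tfrac12 L(K_S-2a)$ with $e=1$), and $P(x,1)=1$. Writing $P^{\nu^e m}=\exp(\nu^e m\log P)$ and expanding $\log P$ in $\delta:=y-1$, the $\delta^0$--term of $\log P$ vanishes, and when $e=2$ the $\delta^1$--term vanishes too; since $(y^{-\frac12}-y^{\frac12})^2=(1-y)^2/y$ one has $\nu=-\lambda y^{\frac12}\delta^{-1}+O(1)$ and $\nu^2=\lambda^2 y\,\delta^{-2}+O(\delta^{-1})$, so the three products converge, governed by the elementary Lambert--series identities
$$
\sum_{n\ge1}\frac{n^2x^{2n}}{(1-x^{2n})^2}=DG_2(x^2),\qquad \sum_{n\ge1}\frac{nx^{2n}}{1-x^{2n}}=\overline G_2(x^2),\qquad 2\sum_{n\ge1}\frac{(2n-1)x^{2n-1}}{1-x^{4n-2}}=G_2(x)-G_2(-x),
$$
to $\big(e^{DG_2(x^2)}\big)^{(\lambda L)^2/2}$, $\big(e^{-2\overline G_2(x^2)}\big)^{\lambda LK_S}$ and $\big(e^{G_2(x)-G_2(-x)}\big)^{\lambda L(K_S-2a)/2}$ respectively. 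Multiplying all factors, $\lim_{y\to1}\Phi(xy^{\frac12},y,L')$ is the power series of Conjecture~\ref{conjGN}; combining with the left--hand side computation and taking the coefficient of $x^{\vd}$ proves the proposition.

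The substantive part is the last paragraph: resolving the indeterminate forms $P^{\nu^e m}$ and identifying the resulting $x$--series with the Eisenstein series $DG_2(x^2)$, $\overline G_2(x^2)$ and $G_2(x)-G_2(-x)$. Once the three Lambert--series identities above are verified this is a routine factor--by--factor check, the essential structural point being that the $\delta^0$--terms (and, for $e=2$, also the $\delta^1$--terms) of the relevant $\log P$ cancel, which is precisely what makes the limits finite.
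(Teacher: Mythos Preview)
Your proof is correct and follows essentially the same route as the paper: substitute $L\mapsto\lambda(y^{-\frac12}-y^{\frac12})^{-1}L$ in Conjecture~\ref{conj2}, verify that the left side limits to the invariant \eqref{GKapp} via a Chern-root computation (the paper cites \cite[Thm.~4.5]{FG} for this step, but your explicit expansion $\mathsf{X}_{-y}(T_M^{\vir})=\nu^{-\vd}y^{\vd/2}\prod_i(\lambda\,\td(x_i)+\nu y^{\frac12}x_i)$ amounts to the same thing), and identify the $y\to1$ limits of the three $L$-dependent factors with $DG_2(x^2)$, $\overline G_2(x^2)$, and $G_2(x)-G_2(-x)$ via the same Lambert-series identities. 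Your extra substitution $x\mapsto xy^{\frac12}$ is harmless---it merely absorbs the $y^{-\vd/2}$ prefactor already present on the left side of Conjecture~\ref{conj2}---so the two arguments are equivalent.
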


\begin{proof}[Proof of Proposition \ref{conj2impliesVWmu}]
Recall the definition of $y^{-\frac{r}{2}}\mathsf{X}_{-y}(E)$, for any complex $E$ of rank $r$ on $M$, from Section \ref{Doninv}. Suppose $r \geq 0$ and denote by $\{ \cdot \}_r$ the degree $r$ part in $A^*(M)_{\Q}$. Then \cite[Thm.~4.5]{FG} 
$$
\left\{ y^{-\frac{r}{2}}\mathsf{X}_{-y}(E) \right\}_r = c_{r}(E) \mod (1-y),
$$
For $D \in A^1(M)_{\Q}$, we are interested in  
$$
\left\{ \sum_{k=0}^{r} e^{D} \lambda^{r - k} c_k(E) \right\}_r,
$$
which is insertion \eqref{GKapp} for $E = T_{M}^{\vir}$ and $D = \mu(c_1(L))$. We consider 
$$
e^{\lambda D (y^{-\frac{1}{2}} - y^{\frac{1}{2}})^{-1}} y^{-\frac{r}{2}}\mathsf{X}_{-y}(E).
$$
Again using  \cite[Thm.~4.5]{FG}, we find 
$$
\left\{ e^{\lambda D (y^{-\frac{1}{2}} - y^{\frac{1}{2}})^{-1}} y^{-\frac{r}{2}}\mathsf{X}_{-y}(E) \right\}_r = \left\{ \sum_k e^{D} \lambda^{r - k}  c_k(E) y^{-\frac{k}{2}} \right\}_r \mod (1-y).
$$
Hence
\begin{equation} \label{replaceL}
\left\{ e^{\lambda D (y^{-\frac{1}{2}} - y^{\frac{1}{2}})^{-1}} y^{-\frac{r}{2}}\mathsf{X}_{-y}(E) \Big|_{y=1} \right\}_r = \left\{ \sum_{k=0}^{r} e^{D} \lambda^{r - k} c_k(E) \right\}_r.
\end{equation}
Take $E = T_{M}^{\vir}$ and $D = \mu(c_1(L))$. Replacing $L$ by 
$$
\frac{\lambda L}{y^{-\frac{1}{2}} - y^{\frac{1}{2}}}
$$
in Conjecture \ref{conj2} and setting $y=1$ gives the invariants \eqref{GKapp} by equation \eqref{replaceL}. 

This reduces the proof to the following identities
\begin{align*}
D G_2(x^2) &=\lim_{y\to 1} \sum_{n=1}^{\infty}  \frac{n^2}{(y^{-\frac{1}{2}} - y^{\frac{1}{2}})^2} \log \frac{(1-x^{2n})^2}{(1-x^{2n}y) (1-x^{2n} y^{-1})}, \\
\overline G_2(x^2) &= - \frac{1}{2} \lim_{y\to 1} \sum_{n=1}^{\infty} \frac{n}{(y^{-\frac{1}{2}} - y^{\frac{1}{2}})} \log \frac{(1-x^{2n}y^{-1})}{(1-x^{2n}y)}, \\
G_2(x)-G_2(-x) &=\lim_{y\to 1} \sum_{n>0 \atop \textrm{odd}}
\frac{n}{y^{-\frac{1}{2}} - y^{\frac{1}{2}}}  \log \frac{(1-x^n y^{\frac{1}{2}})(1+x^n y^{-\frac{1}{2}})}{(1-x^n y^{-\frac{1}{2}})(1+x^n y^{\frac{1}{2}})}.
\end{align*}
These identities follow from an elementary computation using repeatedly that 
\begin{align*}
\log(1-x)&=-\sum_{n=1}^{\infty} \frac{x^n}{n}, \\
\lim_{y\to 1} \frac{y^{-\frac{n}{2}}-y^{\frac{n}{2}}}{y^{-\frac{1}{2}}-y^{\frac{1}{2}}}&=n.
\end{align*}
Therefore
\begin{align*} 
&\lim_{y\to 1}  \sum_{n=1}^{\infty} \frac{n^2}{(y^{-\frac{1}{2}} - y^{\frac{1}{2}})^2} \log \frac{(1-x^{2n})^2}{(1-x^{2n}y) (1-x^{2n} y^{-1})}=\lim_{y\to 1}\sum_{n,l>0}\frac{n^2 x^{2nl}}{l}\left(\frac{y^{-\frac{l}{2}}-y^{\frac{l}{2}}}{
y^{-\frac{1}{2}}-y^{\frac{1}{2}}}\right)^2 \\
&=\sum_{n,l>0} n^2 l \,  x^{2nl} = DG_2(x^2).
\end{align*}
The other identities follow similarly.
\end{proof}

{\tt{gottsche@ictp.trieste.it, m.kool1@uu.nl, rwilliam@ictp.it}}
\end{document}